\documentclass[a4paper]{amsart}
\usepackage{graphicx}
\usepackage[all]{xy}
\tolerance=5000
\theoremstyle{plain}
  \newtheorem{thm}{Theorem}[section]
  \newtheorem{lem}[thm]{Lemma}
  \newtheorem{cor}[thm]{Corollary}
  \newtheorem{prop}[thm]{Proposition}

\theoremstyle{definition}
  
  \newtheorem{ex}[thm]{Example}

\theoremstyle{remark}
  \newtheorem{rem}[thm]{Remark}

\newcommand{\Z}{\mathbb{Z}}
\newcommand{\cyclic}[1]{\Z/#1\Z}

\newcommand{\trivial}{\emptyset}
\newcommand{\trivialp}[1]{\trivial_{#1}}

\newcommand{\xType}[1]{\lvert#1\rvert}

\newcommand{\singular}[1]{\dot{#1}}
\newcommand{\subphrase}{\vartriangleleft}
\newcommand{\aplus}{a_+}
\newcommand{\aminus}{a_-}
\newcommand{\bplus}{b_+}
\newcommand{\bminus}{b_-}
\newcommand{\zeromatrix}{\bf{0}}
\DeclareMathOperator{\rank}{rank}

\numberwithin{equation}{section}
\hyphenation{Yoko-ta Mura-kami}
\allowdisplaybreaks
\begin{document}
\title{Finite type invariants of nanowords and nanophrases}
\author{Andrew Gibson}
\author{Noboru Ito}
\address{
Department of Mathematics,
Tokyo Institute of Technology,
Oh-okayama, Meguro, Tokyo 152-8551, Japan
}
\email{gibson@math.titech.ac.jp}
\address{
Department of Mathematics,
Waseda University, Ohkubo, Shinjuku-ku,
Tokyo 169-8555, Japan
}
\email{noboru@moegi.waseda.jp}
\date{\today}
\begin{abstract}
Homotopy classes of nanowords and nanophrases are combinatorial
 generalizations of virtual knots and links.
Goussarov, Polyak and Viro defined finite type invariants for virtual
 knots and links via semi-virtual crossings.
We extend their definition to nanowords and nanophrases.
We study finite type invariants of low degrees.
In particular, we show that the linking matrix and $T$ invariant defined
 by Fukunaga are finite type of degree one and degree two respectively.
We also give a finite type invariant of degree $4$ for open homotopy of
 Gauss words.
\end{abstract}
\keywords{nanowords, nanophrases, homotopy invariant, finite type invariant}
\subjclass[2000]{Primary 57M99; Secondary 68R15}
\thanks{The first author is  supported by a Scholarship from the
Ministry of Education, Culture, Sports, Science and Technology of
Japan. 
The second author was a Research Fellow of the Japan Society for the
Promotion of Science.
This work was partly supported by KAKENHI} 
\maketitle
\section{Introduction}
A Gauss word is a word such that any letter appearing in the word does
so exactly twice and an $r$-component Gauss phrase is a sequence of $r$
finite length words such that their concatenation gives a Gauss word.
Let $\alpha$ be a finite set.
Then an $\alpha$-alphabet is a set which has a map from the set
to $\alpha$.
An $r$-component nanophrase over $\alpha$ is a pair $(\mathcal{A},p)$
where $p$ is an $r$-component Gauss phrase and $\mathcal{A}$ is an
$\alpha$-alphabet consisting of the letters appearing in $p$.
We write $P(\alpha)$ for the set of nanophrases over $\alpha$.
Nanowords are $1$-component nanophrases.
Nanowords and nanophrases were defined by Turaev in \cite{Turaev:Words}
and \cite{Turaev:KnotsAndWords}.
\par
Let $\tau$ be an involution on $\alpha$ and let $S$ be a subset of
$\alpha \times \alpha \times \alpha$.
Using these data, moves are defined on nanophrases.
The moves generate an equivalence relation on nanophrases over $\alpha$
called homotopy.
Different choices of $\alpha$, $\tau$ and $S$ may give a different
equivalence relation.
One such choice of $\alpha$, $\tau$ and $S$ gives a homotopy for which
the equivalence classes of nanowords over $\alpha$ correspond
bijectively to open virtual knots \cite{Turaev:KnotsAndWords}. 
\par
Finite type invariants for classical knots and links were defined by
Vassiliev in \cite{Vassiliev:1990}.
They can be defined in terms of the crossing change operation.
Finite type invariants for virtual knots and links were defined in the
same way by Kauffman \cite{Kauffman:VirtualKnotTheory}.
Goussarov, Polyak and Viro defined finite type invariants for virtual
knots and links in a different way by introducing a new kind of crossing
called a semi-virtual crossing based on the virtualization operation
(changing a real crossing into a virtual crossing)
\cite{Goussarov/Polyak/Viro:FiniteTypeInvariants}.
Finite type invariants in the sense of Goussarov, Polyak and Viro are
finite type invariants in the sense of Kauffman, but the reverse does
not hold.
\par
In this paper we extend the approach of Goussarov, Polyak and Viro to
define finite type invariants of nanowords and nanophrases.
We extend the definition of nanophrases to allow some letters to be
marked with a dot.
We call such letters semi-letters.
We view nanophrases with semi-letters as elements in
$\Z P(\alpha)$ as follows.
Let $p_A$ be a nanophrase that contains the letter $A$, let $p$ be
the nanophrase derived from $p$ by removing the letter $A$ and let
$p_{\dot{A}}$ be the nanophrase derived from $p_A$ by marking $A$ as
a semi-letter.
Then we define $p_{\dot{A}}$ to be
\begin{equation*}
p_{\dot{A}} = p_A - p.
\end{equation*}
\par
We fix $\alpha$, $\tau$ and $S$ and thus fix a homotopy.
Let $v$ be a homotopy invariant taking values in an additive abelian
group $G$. 
We extend $v$ to $\Z P(\alpha)$ linearly.
Then $v$ is a finite type invariant if there exists an $n$ such that for
all nanophrases with more than $n$ semi-letters, $v(p)$ is $0$.
\par
Finite type invariants of degree $0$ are trivial.
We show that the linking matrix
invariant defined by Fukunaga \cite{Fukunaga:nanophrases2} is a finite
type invariant of degree $1$ (Theorem~\ref{thm:linkingmatrix-deg1}).
Any other finite type invariant of degree $1$ can be calculated from the
linking matrix (Theorem~\ref{thm:deg1-lm}).
Fukunaga's $T$ invariant is a homotopy invariant when $S$ is diagonal
(that is, $S$ has the form 
$\{(a,a,a) \; | \; a \in \alpha\}$) \cite{Fukunaga:nanophrases2}.
In Theorem~\ref{thm:t-degree2} we show that Fukunaga's $T$ invariant is
a finite type invariant of degree $2$.
However, under the same condition on $S$, there exist finite type
invariants of degree $2$ which are independent of $T$
(Theorem~\ref{thm:deg2-tindep}).
\par
Let $v$ be a finite type invariant of degree $n$ for $r$-component
nanophrases over $\alpha$.
The invariant $v$ is a universal invariant of degree $n$ if for any
other finite type invariant $v^\prime$ of degree $n$, there exists a
homomorphism $f$ such that $v^\prime$ is equal to $f \circ v$.
Goussarov, Polyak and Viro defined universal invariants for virtual
knots and links.
Following their approach we define universal invariants for
nanophrases.
Up to isomorphism, the image of $\Z P(\alpha)$ under a universal
invariant of degree $n$ does not depend on the universal invariant.
We define $G_n(\alpha,\tau,S,r)$ to be this image.
In Theorem~\ref{thm:deg2-nanoword} we calculate $G_2(\alpha,\tau,S,1)$
for all $\alpha$ and $\tau$ and for certain $S$.
\par
In Section~\ref{sec:gausswords} we consider nanowords in the case where
$\alpha$ is a single element, $\tau$ is the identity map and $S$ is
diagonal.
In this case, the map to $\alpha$ can be forgotten and nanowords are
just Gauss words.
Thus the homotopy given by $\alpha$, $\tau$ and $S$ is called homotopy
of Gauss words (it was called open homotopy of Gauss words in
\cite{Gibson:gauss-word}).
It was shown independently in \cite{Gibson:gauss-word} and
\cite{Manturov:freeknots} that Gauss word homotopy that there exists
Gauss words which are not homotopically equivalent to the trivial Gauss
word disproving a conjecture by Turaev \cite{Turaev:Words}.
For homotopy of Gauss words, we show that although there are no finite
type invariants of degree $1$, $2$ or $3$, there is a unique finite type
invariant of degree $4$ which takes values in $\cyclic{2}$
(Theorem~\ref{thm:gw}).
This invariant is easy to calculate and gives another way to show the
existence of homotopically non-trivial Gauss words.
\section{Nanowords and nanophrases}
In this section we recall the definitions of nanowords, nanophrases and
their homotopies.
All definitions in this section were originally given by Turaev in
\cite{Turaev:Words} and \cite{Turaev:KnotsAndWords}.
\par
A \emph{word} of length $m$ is a finite sequence of $m$ letters.
The unique word of length $0$ is called the \emph{trivial word} and is
written $\trivial$.
An $r$-component \emph{phrase} is a finite sequence of $r$ words which we
call \emph{components}.
When writing phrases we use the `$|$' symbol to separate components.
For example $ABC|\trivial|B$ is a $3$-component phrase.
The unique $r$-component phrase for which every component is the trivial
word is called the \emph{trivial $r$-component phrase} and is denoted
$\trivialp{r}$.
\par
A \emph{Gauss word} is a word in which any letter appears either exactly
twice or not at all.
Similarly, a \emph{Gauss phrase} is a phrase which satisfies the same
condition.
Alternatively, a phrase is a Gauss phrase if the concatenation of all of
its components forms a Gauss word.
\par
The \emph{rank} of a Gauss word or Gauss phrase is the number of
distinct letters that appears in it.
Note that the rank of a Gauss word must be half its length.
For example, the rank of $ABACBC$ is $3$ and the rank of
$ABA|\trivial|B$ is $2$.
\par
Let $\alpha$ be a finite set.
An \emph{$\alpha$-alphabet} is a set with a map to
$\alpha$.
For a letter $A$ in an $\alpha$-alphabet, its image under the map is
denoted $\xType{A}$.
An $r$-component \emph{nanophrase} over $\alpha$ is a pair
$(\mathcal{A},p)$ where $p$ 
is an $r$-component Gauss phrase and $\mathcal{A}$ is an
$\alpha$-alphabet consisting 
of the letters appearing in $p$.
If a Gauss phrase $p$ only has one component, $p$ is a Gauss word and
$(\mathcal{A},p)$ can be described as a one-component nanophrase over
$\alpha$ or a \emph{nanoword} over $\alpha$.
The rank of a nanophrase is the rank of its Gauss phrase.
\par
Rather than write $(\mathcal{A},p)$ we will often just write $p$ for a
nanophrase.
When writing a nanophrase in this way we do not forget that there is a
map from the set of letters appearing in $p$ to $\alpha$.
\par
When giving specific nanophrases we will sometimes use the notation
$p:x$ where $p$ is a Gauss phrase and $x$ is a word of length $\rank(p)$
in $\alpha$.
Arrange the set of letters appearing in $p$ alphabetically to give a
word $y$ of length $\rank(p)$.
Then the map from the letters in $p$ to $\alpha$ is given as follows.
For each $i$, the $i$th letter in $y$ maps to the $i$th letter in $x$.
\begin{ex}
Let $p$ be the nanophrase $AB|A|B$ where $\xType{A}$ is $a$ and
 $\xType{B}$ is $b$ for some $a$ and $b$ in $\alpha$.
Then we can write $p$ as $AB|A|B:ab$.
\par
On the other hand, let $q$ be $EBC|B|CE:abb$.
Then $q$ is the nanophrase $EBC|B|CE$ where $\xType{B}$ is $a$,
 $\xType{C}$ is $b$ and $\xType{E}$ is $b$.
\end{ex}
Two nanophrases over $\alpha$, $(\mathcal{A},p)$ and $(\mathcal{B},q)$
are \emph{isomorphic} if there exists a bijection from $\mathcal{A}$ to
$\mathcal{B}$ which preserves the map to $\alpha$ and, when applied
letterwise to $p$ gives $q$.
\par
Fixing $\alpha$, let $\tau$ be an involution on $\alpha$ ($\tau$ is a
map from $\alpha$ to $\alpha$ such that $\tau \circ \tau$ is the
identity map) and let $S$ be a subset of 
$\alpha \times \alpha \times \alpha$. 
We say that $S$ is \emph{diagonal} if it has the form
$\{(a,a,a) \; | \; a \in \alpha\}$.
The triple $(\alpha,\tau,S)$ is called \emph{homotopy data}.
\emph{Homotopy moves} for nanophrases are defined as follows
\begin{itemize}
\item[H1:]
$(\mathcal{A},xAAy) \leftrightarrow (\mathcal{A}-\{A\},xy)$
\item[H2:]
$(\mathcal{A},xAByBAz) \leftrightarrow (\mathcal{A}-\{A,B\},xyz)$,
if $\xType{A}=\tau(\xType{B})$
\item[H3:]
$(\mathcal{A},xAByACzBCt) \leftrightarrow (\mathcal{A},xBAyCAzCBt)$,
if $(\xType{A},\xType{B},\xType{C}) \in S$
\end{itemize}
where $x$, $y$, $z$ and $t$ represent arbitrary sequences of letters,
possibly including the `$|$' or `$\trivial$' symbols so that both sides
of each move are nanophrases.
\par
\emph{Homotopy} is the equivalence relation of nanophrases over $\alpha$
generated by isomorphism and the three homotopy moves.
The equivalence relation is dependent on the choice of the homotopy data
$(\alpha, \tau, S)$, so
different choices of homotopy data may give different equivalence
relations.
\begin{rem}\label{rem:vknot-homotopy}
Let $\alpha_{vk}$ be the set $\{\aplus, \aminus, \bplus, \bminus \}$
 and let $\tau_{vk}$ be the involution on $\alpha$ where $\aplus$ maps
 to $\bminus$ and $\aminus$ maps to $\bplus$.
Let $S_{vk}$ be the set
\begin{align*}
S_{vk} = \{ 
& (\aplus,\aplus,\aplus), 
  (\aplus,\aplus,\aminus), 
  (\aplus,\aminus,\aminus), \\
& (\aminus,\aminus,\aminus),
  (\aminus,\aminus,\aplus),
  (\aminus,\aplus,\aplus), \\
& (\bplus,\bplus,\bplus),
  (\bplus,\bplus,\bminus),
  (\bplus,\bminus,\bminus), \\
& (\bminus,\bminus,\bminus),
  (\bminus,\bminus,\bplus),
  (\bminus,\bplus,\bplus)
 \}.
\end{align*}
Turaev showed that the set of homotopy classes of nanowords over
 $\alpha_{vk}$ under the homotopy given by
 $(\alpha_{vk},\tau_{vk},S_{vk})$ is in bijective 
 correspondence with the set of open virtual knots
 \cite{Turaev:KnotsAndWords}.
He also showed that, under the same homotopy, the set of homotopy
 classes of nanophrases over $\alpha_{vk}$ is in bijective
 correspondence with the set of stable equivalence classes of ordered
 pointed link diagrams on oriented surfaces (see Section~6.3 of
 \cite{Turaev:KnotsAndWords} for this result and for definitions of
 ordered pointed link diagrams and stable equivalence).
\par
Virtual knots and links correspond to a nanophrase homotopy where a
 shift move is permitted \cite{Turaev:KnotsAndWords}.
See Section~\ref{sec:closed-homotopy} for further details.
\end{rem}
\section{Finite type invariants}
Let $P(\alpha,\tau,S,r)$ be the set of homotopy classes of $r$-component
nanophrases under the homotopy given by $(\alpha,\tau,S)$.
Let $\Z P(\alpha,\tau,S,r)$ be the free abelian group generated by the
elements of $P(\alpha,\tau,S,r)$.
\par
Let $p$ be a nanophrase in $P(\alpha,\tau,S,r)$ which has the form
$xAyAz$ for some letter $A$,  where $x$, $y$ and $z$ are arbitrary
sequences of letters, possibly including the `$|$' or `$\trivial$'
symbols.
We write $x\dot{A}y\dot{A}z$ to denote the formal sum given by
\begin{equation}\label{eqn:singular-letter}
x\dot{A}y\dot{A}z = xAyAz - xyz.
\end{equation}
Here $\dot{A}$ is called a \emph{semi-letter} and 
$\xType{\dot{A}}$ is equal to $\xType{A}$.
By applying Equation~\eqref{eqn:singular-letter} recursively,
nanophrases may contain an arbitrary number of semi-letters.
Note that in \cite{Fujiwara:ftype-arnold}, Fujiwara made the same
definition for semi-letters of nanowords representing plane curves,
although he called them singular letters.
\par
Let $v$ be a homotopy invariant for nanophrases in
$P(\alpha,\tau,S,r)$ which takes values in an additive abelian group.
Then we can extend $v$ to $\Z P(\alpha,\tau,S,r)$ by addition.
In particular, for nanophrases with semi-letters, we have
\begin{equation*}
v(x\dot{A}y\dot{A}z) = v(xAyAz) - v(xyz).
\end{equation*}
\par
We say that $v$ is a \emph{finite type invariant} if there exists an
integer $n$ such that for any nanophrase $p$ with more than $n$
semi-letters, $v(p)$ is $0$. 
The least such $n$ is called the \emph{degree} of $v$.
\par
In Remark~\ref{rem:vknot-homotopy} we noted that open virtual knots
correspond to homotopy classes given by particular homotopy data.
In this case our semi-letters correspond to Goussarov, Polyak and
Viro's semi-virtual crossings
\cite{Goussarov/Polyak/Viro:FiniteTypeInvariants}.
\begin{rem}
Using nanowords, the second author gave a systematic construction for a
 large family of finite type invariants of plane curves in
 \cite{Ito:Ftype-curves}.
Fujiwara also studied finite type invariants of plane curves using
 nanowords \cite{Fujiwara:ftype-arnold}.
However, in both cases, invariance under homotopy moves was not
 considered.
\end{rem}
For any homotopy, finite type invariants of degree $0$ are trivial in
the following sense.
\begin{prop}
Let $u$ be a finite type invariant of degree $0$ for some homotopy.
Then for any $r$-component nanophrase $p$, $u(p)$ is equal to
 $u(\trivialp{r})$.
\end{prop}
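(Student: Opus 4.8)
The plan is to unpack the degree-$0$ hypothesis, use it to show that deleting a single letter from any nanophrase leaves the value of $u$ unchanged, and then iterate this reduction to strip $p$ down to the trivial phrase.

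First I would make the degree-$0$ condition explicit: by definition, a finite type invariant of degree $0$ vanishes on every nanophrase carrying at least one semi-letter. Now let $p$ be any nanophrase of positive rank and write it as $xAyAz$ for some letter $A$ appearing in it, which is possible since every letter of a Gauss phrase appears exactly twice. Marking $A$ as a semi-letter produces $x\dot{A}y\dot{A}z$, a nanophrase with exactly one semi-letter, so $u(x\dot{A}y\dot{A}z)=0$. By the defining relation \eqref{eqn:singular-letter} and the linear extension of $u$, this reads
\begin{equation*}
0 = u(x\dot{A}y\dot{A}z) = u(xAyAz) - u(xyz),
\end{equation*}
so that $u(xAyAz)=u(xyz)$. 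In other words, removing the letter $A$ from $p$ does not change the value of $u$.

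Next I would induct on the rank of $p$. The nanophrase $xyz$ obtained by deleting $A$ has rank one less than $p$, while the `$|$' symbols occurring in $x$, $y$ and $z$ are left untouched, so it remains an $r$-component nanophrase. Repeating the deletion step — each time lowering the rank by one while preserving both the value of $u$ and the number of components — after finitely many steps I reach a nanophrase of rank $0$ with $r$ components. The only such nanophrase is the one in which every component is the trivial word, namely $\trivialp{r}$. Chaining the resulting equalities $u(p)=\dots=u(\trivialp{r})$ gives the claim.

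The argument is essentially immediate, and there is no genuinely hard step; the only point requiring a moment's care is verifying that deletion preserves the $r$-component structure. This holds because the defining relation for semi-letters removes only the two occurrences of the marked letter and leaves all component separators in place, so the induction stays within $r$-component nanophrases throughout and terminates at $\trivialp{r}$ rather than at some phrase with a different number of components.
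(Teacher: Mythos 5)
Your proof is correct and follows essentially the same route as the paper: both mark a letter $A$ of $p$ as a semi-letter, use the degree-$0$ hypothesis together with the relation $u(x\dot{A}y\dot{A}z) = u(xAyAz) - u(xyz)$ to conclude $u(xAyAz) = u(xyz)$, and then induct on the rank down to $\trivialp{r}$. The only cosmetic difference is that you phrase the induction as iterated deletion while the paper phrases it as a standard induction on rank; the content is identical.
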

\begin{proof}
We prove by induction on the rank of $p$.
If $p$ has rank $0$, then $p$ is $\trivialp{r}$ and so the result is
 true.
Now suppose that $p$ has rank $n$ (greater than $0$) and that the result
 is true for any nanophrase of rank less than $n$.
So $p$ contains a letter, say $A$, and $p$ can be written in the form
 $xAyAz$.
Then we have
\begin{equation*}
u(x\dot{A}y\dot{A}z) = u(xAyAz) - u(xyz)
\end{equation*}
which implies
\begin{equation*}
u(xAyAz) = u(xyz)
\end{equation*}
because $u$ is a finite type invariant of degree $0$.
Since $xyz$ is a nanophrase with rank less than $n$, $u(xyz)$ equals
 $u(\trivialp{r})$ by the induction assumption.
Thus $u(p)$ is equal to $u(\trivialp{r})$.
\end{proof}
\section{Angle bracket formulae}
Let $p$ and $q$ be $r$-component nanophrases.
The nanophrase $q$ is a \emph{subphrase} of $p$, written 
$q \subphrase p$, if it can be obtained by
deleting letters from $p$.
By definition $p$ is a subphrase of itself.
Note that the trivial $r$-component nanophrase
$\trivialp{r}$ is a subphrase of any $r$-component nanophrase. 
If $p$ has rank $n$, $p$ has exactly $2^n$ subphrases.
\begin{ex}
Let $p$ be the nanophrase $ABC|BA|C:abc$ for some $a$, $b$ and $c$ in
 $\alpha$.
Then $p$ has $8$ subphrases, $p$ itself, $\trivialp{3}$ and the
 following $6$ others:
$AB|BA|\trivial:ab$, $AC|A|C:ac$, $BC|B|C:bc$, $A|A|\trivial:a$,
 $B|B|\trivial:b$ and $C|\trivial|C:c$.
\par
Note that the map to $\alpha$ is important.
For example, the nanophrase $A|A|\trivial:b$ is not a subphrase of $p$
 unless $b$ is equal to $a$.
Note also that the number of components of the subphrase and the
 nanophrase should be the same.
The nanophrase $AB|BA:ab$ is not a subphrase of $p$.
\end{ex}
For two $r$-component nanophrases $p$ and $q$ we define the angle
bracket $\langle q, p \rangle$ to be the number of subphrases of $p$
that are isomorphic to $q$.
By definition, for any $r$-component nanophrase $p$, 
$\langle p, p \rangle$ and $\langle \trivialp{r}, p \rangle$ are both
equal to $1$. 
\begin{ex}
Let $p$ be the nanophrase $ABC|BA|C:abc$ from the previous example.
Then $\langle AB|BA|\trivial:ab , p \rangle$ is $1$.
If $a$ is not equal to $b$, $\langle AC|A|C:ac , p \rangle$ is equal to
 $1$.
On the other hand, if $a$ is equal to $b$ then 
$\langle AC|A|C:ac , p \rangle$ is $2$ because $AC|A|C:ac$ and
 $BC|B|C:bc$ are isomorphic.
\end{ex}
We extend the angle bracket linearly in both terms, so that it is a map
from $\Z P(\alpha,\tau,S,r) \times \Z P(\alpha,\tau,S,r)$ to 
$\Z_{\geq 0}$ by
\begin{equation*}
\langle t + u, p \rangle = \langle t, p \rangle + \langle u, p \rangle
\end{equation*}
and 
\begin{equation*}
\langle p, t + u \rangle = \langle p, t \rangle + \langle p, u \rangle,
\end{equation*}
for all elements $t$ and $u$ in $\Z P(\alpha,\tau,S,r)$.
Given an element $u$ in $\Z P(\alpha,\tau,S,r)$ we call the map $f_u$
from $\Z P(\alpha,\tau,S,r)$ to $\Z_{\geq 0}$ given by
\begin{equation*}
f_u(p) = \langle u, p \rangle
\end{equation*}
an \emph{angle bracket formula}.
\begin{rem}
The definition of angle bracket formulae corresponds to the definition
 of Gauss diagram formulae in
 \cite{Goussarov/Polyak/Viro:FiniteTypeInvariants}.
\end{rem}
\begin{lem}\label{lem:singular-plusn}
Let $p$ be an $r$-component nanophrase of rank $n$ and let $q$ be an
 $r$-component nanophrase with more than $n$ semi-letters.
Then $\langle p, q \rangle$ is equal to $0$.
\end{lem}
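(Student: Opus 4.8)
The plan is to expand $q$ into a signed sum of ordinary nanophrases and then combine bilinearity of the angle bracket with a sign-cancellation argument. List the semi-letters of $q$ as $\dot{A_1},\dots,\dot{A_m}$ with $m>n$. Applying Equation~\eqref{eqn:singular-letter} once for each semi-letter expresses $q$ as
\[
q=\sum_{T\subseteq\{A_1,\dots,A_m\}}(-1)^{m-\abs{T}}\,q_T,
\]
where $q_T$ is the ordinary nanophrase obtained from $q$ by turning the semi-letters in $T$ into ordinary letters and deleting the semi-letters not in $T$. Since the angle bracket is linear in its second argument, this gives
\[
\langle p,q\rangle=\sum_{T\subseteq\{A_1,\dots,A_m\}}(-1)^{m-\abs{T}}\langle p,q_T\rangle .
\]

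Next I would reinterpret each term combinatorially. A subphrase of $q_T$ isomorphic to $p$ is determined by the set $K$ of letters it retains; as $p$ has rank $n$, such a $K$ has exactly $n$ elements and satisfies $q[K]\cong p$, where $q[K]$ denotes the subphrase of $q$ keeping exactly the letters of $K$. Crucially, $q[K]$ depends only on $K$ and not on $T$, and it occurs inside $q_T$ precisely when every semi-letter it uses is present there, that is, when $J(K):=K\cap\{A_1,\dots,A_m\}\subseteq T$. Hence $\langle p,q_T\rangle$ counts exactly those $K$ with $q[K]\cong p$ and $J(K)\subseteq T$, and swapping the order of summation yields
\[
\langle p,q\rangle=\sum_{K:\,q[K]\cong p}\ \sum_{T:\,J(K)\subseteq T}(-1)^{m-\abs{T}} .
\]

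The decisive step is the inner sum. Writing $T=J(K)\cup T'$ with $T'$ ranging over subsets of $U:=\{A_1,\dots,A_m\}\setminus J(K)$, the inner sum factors as $\sum_{T'\subseteq U}(-1)^{\abs{U}-\abs{T'}}=(1-1)^{\abs{U}}$. Because a rank-$n$ copy of $p$ uses at most $n$ letters, we have $\abs{J(K)}\le n<m$, so $\abs{U}=m-\abs{J(K)}$ is strictly positive and every inner sum vanishes. Therefore $\langle p,q\rangle=0$.

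I expect the only delicate point to be the bookkeeping in the middle step: making precise that a subphrase of $q_T$ is determined solely by its retained letters $K$ (independently of $T$), so that a fixed $K$ is counted in exactly those $q_T$ with $J(K)\subseteq T$. Once this is pinned down, the vanishing is forced by the elementary identity $\sum_{T'\subseteq U}(-1)^{\abs{U}-\abs{T'}}=(1-1)^{\abs{U}}$ together with the strict inequality $n<m$, which guarantees at least one semi-letter unused by every copy of $p$, over which the two signs cancel.
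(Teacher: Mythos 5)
Your proof is correct and follows essentially the same route as the paper's: expand $q$ as a signed sum over resolutions (your $q_T$), use bilinearity, swap the summation order so as to sum over copies $K$ of $p$ in the full resolution, and kill each inner sum using a semi-letter unused by that copy. The only cosmetic difference is that you evaluate the inner sum as the binomial identity $(1-1)^{\abs{U}}=0$, whereas the paper pairs off resolutions differing in a single unused semi-letter $\dot{A}$ via $[v,q_A]-[v,q_0]=0$; these are the same cancellation.
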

\begin{proof}
For $r$-component nanophrases $y$ and $z$, define $[y,z]$ by 
\begin{equation*}
[y,z] =
\begin{cases}
1 & \text{if $y$ is a subphrase of $z$} \\
0 & \text{otherwise}.
\end{cases}
\end{equation*}
We extend this notation linearly in the second term to any element in
$\Z P(\alpha,\tau,S,r)$, so 
\begin{equation*}
[y,x_1 + x_2] = [y,x_1] + [y,x_2]
\end{equation*}
for all $x_1$ and $x_2$ in $\Z P(\alpha,\tau,S,r)$.
\par
For an $r$-component nanophrase $u$, let $M(u)$ be the set of subphrases
 of $u$ which are isomorphic to $p$.
Then, by definition, $\langle p, u \rangle$ is equal to the number of
 elements in $M(u)$ and so can be written
\begin{equation*}
\langle p, u \rangle = \sum_{u^\prime \in M(u)}[u^\prime,u].
\end{equation*}
Then for any subphrase $v$ of $u$, $\langle p, v \rangle$ can be written
\begin{equation}\label{eqn:squarebracket}
\langle p, v \rangle = \sum_{u^\prime \in M(u)}[u^\prime,v].
\end{equation}
\par
We say that an $r$-component nanophrase $u$ is a \emph{resolution} of
 $q$ (written $u \dashv q$) if $u$ is derived from $q$ by taking each
 semi-letter in $q$ and either removing the letter or by removing the
 dot from the letter.
If $q$ has $m$ semi-letters, there are $2^m$ different resolutions
 of $q$.
Let $q^\prime$ be the nanophrase derived from $q$ by removing the dots
 from all the semi-letters.
Then $q^\prime$ is a resolution of $q$ and all other resolutions of $q$
 are subphrases of $q^\prime$. 
Let $u$ be a resolution of $q$.
Then we define $\delta(u,q)$ by
\begin{equation*}
\delta(u,q) = \rank(q) - \rank(u).
\end{equation*} 
\par
Using resolutions, $\langle p, q \rangle$ can be written as
\begin{equation*}
\langle p, q \rangle = \sum_{u \dashv q} (-1)^{\delta(u,q)}\langle p,u \rangle.
\end{equation*}
Using Equation~\eqref{eqn:squarebracket}, this becomes
\begin{align*}
\langle p, q \rangle & = \sum_{u \dashv q} (-1)^{\delta(u,q)} \sum_{v \in
 M(q^\prime)}[v,u] \\ 
& = \sum_{v \in M(q^\prime)} \sum_{u \dashv q} (-1)^{\delta(u,q)}
 [v,u] \\
& = \sum_{v \in M(q^\prime)} [v,q].
\end{align*}
\par
For any $v$ in $M(q^\prime)$, the rank of $v$ is $n$.
Since $q$ is a nanophrase with more than $n$ semi-letters, there
 must exist a semi-letter in $q$ which does not appear (as a
 letter without a dot) in $v$.
Call this letter $\dot{A}$.
Let $q_A$ be the nanophrase derived from $q$ by changing $\dot{A}$ to
 $A$.
Let $q_0$ be the nanophrase derived from $q$ by removing $\dot{A}$.
Then
\begin{equation*}
[v,q] = [v,q_A] - [v,q_0] = 0
\end{equation*}
because $[v,q_A]$ is equal to $[v,q_0]$.
\par
Thus
\begin{equation*}
\langle p, q \rangle = \sum_{v \in M(q^\prime)} [v,q] = 0
\end{equation*}
and the proof is complete.
\end{proof}
The \emph{degree} of an element of $\Z P(\alpha,\tau,S,r)$ is the
maximum of the ranks of the terms.
The \emph{degree} of an angle bracket formula $\langle u, p \rangle$ is
the degree of $u$.
\begin{prop}\label{prop:angleisft}
Suppose $v$ is a homotopy invariant which is given by an angle
 bracket formula with degree $m$.
Then $v$ is a finite type invariant of degree less than or equal to
 $m$.
\end{prop}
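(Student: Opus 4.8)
The plan is to reduce the proposition directly to Lemma~\ref{lem:singular-plusn} using the linearity of the angle bracket in its first argument. Since $v$ is given by an angle bracket formula, there is an element $u \in \Z P(\alpha,\tau,S,r)$ with $v = f_u$, that is, $v(p) = \langle u, p \rangle$ for every $r$-component nanophrase $p$. I would write $u = \sum_i c_i p_i$, where the $p_i$ are distinct $r$-component nanophrases and the $c_i$ are nonzero integers. By definition, the degree of the formula is the degree of $u$, namely the maximum of the ranks of the terms, so the hypothesis that $v$ has degree $m$ means precisely that $\rank(p_i) \leq m$ for every $i$.

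To check the finite type condition, I would take an arbitrary $r$-component nanophrase $q$ with more than $m$ semi-letters and show that $v(q) = 0$. Expanding by linearity gives
\begin{equation*}
v(q) = \langle u, q \rangle = \sum_i c_i \langle p_i, q \rangle.
\end{equation*}
Now I would fix an index $i$ and set $n_i = \rank(p_i)$. Since $n_i \leq m$ and $q$ has more than $m$ semi-letters, $q$ has more than $n_i$ semi-letters, so Lemma~\ref{lem:singular-plusn} applied to $p_i$ and $q$ gives $\langle p_i, q \rangle = 0$. As this holds for each $i$, the whole sum vanishes and $v(q) = 0$.

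Since $q$ was an arbitrary nanophrase with more than $m$ semi-letters, this establishes that $v$ is a finite type invariant of degree at most $m$, as required. The argument is essentially immediate once Lemma~\ref{lem:singular-plusn} is available; the only point needing a moment's care is the bookkeeping that the degree $m$ of the formula bounds the rank of \emph{every} individual term $p_i$, so that the hypothesis of the lemma applies uniformly to each summand and a single bound on the number of semi-letters of $q$ suffices to annihilate the entire expansion. Note also that this yields the inequality ``degree $\leq m$'' rather than equality, which is consistent with the statement, since cancellation among the terms could in principle lower the degree.
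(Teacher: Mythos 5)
Your proposal is correct and follows exactly the paper's argument: the paper likewise reduces the claim to Lemma~\ref{lem:singular-plusn}, merely stating the reduction in one line where you spell out the linear expansion of $u$ into terms $p_i$ of rank at most $m$ and apply the lemma to each term. The extra bookkeeping you include is precisely the detail the paper leaves implicit, so the two proofs coincide in substance.
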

\begin{proof}
Suppose $v(p)$ is given by the angle bracket formula 
 $\langle u, p \rangle$ for some $u$ in $\Z P(\alpha,\tau,S,r)$.
Then we just need to show that for any nanophrase $q$ with more than $m$
 semi-letters, $\langle u, p \rangle$ is equal to $0$.
This follows from Lemma~\ref{lem:singular-plusn}.
\end{proof}
\begin{rem}
If $v$ is a homotopy invariant which is given by an angle bracket
 formula with degree $m$, it is possible that $v$ is a finite type
 invariant of degree strictly less than $m$.
Later, in Example~\ref{ex:angle-ft-degree-a} and
 Example~\ref{ex:angle-ft-degree-b}, we give some examples of finite
 type invariants of degree $1$ which are defined by angle bracket
 formulae of degree $2$. 
\end{rem}
\section{Universal invariants}\label{sec:universal}
Let $v$ be a finite type invariant for $P(\alpha,\tau,S,r)$ taking
values in an abelian group $G$.
We say that $v$ is a \emph{universal invariant} of degree $n$ if $v$ has
degree less than or equal to $n$ and for
every finite type invariant $v^\prime$ of degree less than or equal to
$n$ taking values in some abelian group $H$, there exists a homomorphism
$f$ from $G$ to $H$ such that the following diagram
\begin{equation*}
\begin{minipage}[c]{3cm}
\xymatrix{
\Z P(\alpha,\tau,S,r) \ar[r]^-v \ar[rd]_{v^\prime} &
G \ar[d]^{f} \\
&H
}
\end{minipage}
\end{equation*}
is commutative.
In other words, if $p$ and $q$ are two $r$-component nanophrases over
$\alpha$ which can be distinguished by a finite type invariant of degree
$n$ and $v$ is a universal invariant of degree $n$, then $v(p)$ is not
equal to $v(q)$.
\par
In \cite{Goussarov/Polyak/Viro:FiniteTypeInvariants}, Goussarov, Polyak
and Viro defined a universal invariant for finite type
invariants of virtual knots and links.
In a similar way, we now define universal invariants for homotopies of
nanophrases.
\par
Let $\Z \mathcal{I}_r(\alpha)$ be the additive abelian group generated
by $r$-component nanophrases modulo isomorphism.
Then $\Z P(\alpha,\tau,S,r)$ is $\Z \mathcal{I}_r(\alpha)$ modulo the
three homotopy moves.
\par 
Let $G(\alpha,\tau,S,r)$ be the group given by $\Z \mathcal{I}_r(\alpha)$
modulo the following three types of relations.
The first type of relation has the form
\begin{equation*}
xAAy = 0,
\end{equation*}
where $x$ and $y$ are arbitrary sequences of letters possibly including
the `$|$' symbol so that $xy$ is a nanophrase.
The second type of relation has the form
\begin{equation*}
xAByBAz + xAyAz + xByBz = 0,
\end{equation*}
where $x$, $y$ and $z$ are arbitrary sequences of letters possibly including
the `$|$' symbol so that $xyz$ is a nanophrase and $\xType{A}$ is equal
to $\tau(\xType{B})$.
The third type of relation has the form
\begin{align*}
xAByACzBCt + xAByAzBt + xAyACzCt + xByCzBCt \\ =
xBAyCAzCBt + xBAyAzBt + xAyCAzCt + xByCzCBt,
\end{align*}
where $x$, $y$, $z$ and $t$ are arbitrary sequences of letters possibly
including the `$|$' symbol so that $xyzt$ is a nanophrase and the triple
$(\xType{A},\xType{B},\xType{C})$ is in $S$.
The relations hold for any set of nanophrases matching the terms.
\begin{rem}
When $(\alpha,\tau,S)$ is $(\alpha_{vk},\tau_{vk},S_{vk})$ (see
 Remark~\ref{rem:vknot-homotopy}), these relations are equivalent to
 those appearing in Section~2.5 of
 \cite{Goussarov/Polyak/Viro:FiniteTypeInvariants}.
\end{rem}
We define a map $\theta_r$ from $\Z \mathcal{I}_r(\alpha)$ to itself as
follows.
For an $r$-component nanophrase $p$, $\theta_r(p)$ is the sum of all the
subphrases of $p$ considered as an element of
$\Z \mathcal{I}_r(\alpha)$.
We then extend this definition linearly to all of 
$\Z \mathcal{I}_r(\alpha)$.
Note that for a nanophrase $p$, $\theta_r(p)$ can be written as
\begin{equation*}
\theta_r(p) = \sum_{q \subphrase p} q.
\end{equation*}
\begin{ex}
Consider the nanophrase $AB|AB:aa$ for some $a$ in $\alpha$.
Then $\theta_r(AB|AB:aa)$ is given by
\begin{equation*}
\theta_r(AB|AB:aa) = AB|AB:aa + A|A:a + B|B:a + \trivial|\trivial.
\end{equation*}
The nanophrases $A|A:a$ and $B|B:a$ are isomorphic and so they are
 equivalent in $\Z \mathcal{I}_r(\alpha)$.
Thus $\theta_2(AB|AB)$ can be given more simply by
\begin{equation*}
\theta_2(AB|AB:aa) = AB|AB:aa + 2 A|A:a + \trivial|\trivial.
\end{equation*}
\end{ex} 
\begin{ex}
For some $a$ in $\alpha$, $\theta_2(A|BAB:aa - AA|BB:aa)$ is
\begin{equation*}
A|BAB:aa + A|A:a - AA|BB:aa - AA|\trivial:a.
\end{equation*}
\end{ex}
\begin{rem}
The map $\theta_r$ corresponds to the map $I$ in
 \cite{Goussarov/Polyak/Viro:FiniteTypeInvariants}.
\end{rem}
We define another map $\phi_r$ from $\Z \mathcal{I}_r(\alpha)$ to
itself as follows.
For an $r$-component nanophrase $p$,
\begin{equation*}
\phi_r(p) = \sum_{q \subphrase p} (-1)^{\rank(p)-\rank(q)} q.
\end{equation*}
We then extend this linearly to all of $\Z \mathcal{I}_r(\alpha)$.
\begin{ex}
Consider the nanophrase $AB|AB:aa$.
Then 
\begin{equation*}
\phi_2(AB|AB:aa) = AB|AB:aa - 2 A|A:a + \trivial|\trivial.
\end{equation*}
\end{ex}
\begin{prop}
The map $\theta_r$ is a bijection.
Its inverse is given by $\phi_r$.
\end{prop}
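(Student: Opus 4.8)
The plan is to recognize both maps as instances of Möbius inversion over a Boolean lattice. The crucial observation is that, for a nanophrase $p$ of rank $n$ with letter set $L$ (so $\abs{L}=n$), its subphrases are in bijection with the subsets $T\subseteq L$: for each $T$, let $p_T$ denote the nanophrase obtained from $p$ by deleting every letter not in $T$. Then $\rank(p_T)=\abs{T}$, and $p_S\subphrase p_T$ precisely when $S\subseteq T$; moreover a subphrase of a subphrase is again obtained by deleting letters, so the relation is transitive and the poset of subphrases of $p$ is exactly the Boolean lattice $2^{L}$. I would first set up this dictionary carefully, observing that the assignment $T\mapsto p_T$ recovers the earlier count of exactly $2^n$ subphrases for a rank-$n$ nanophrase.

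With this indexing in place, both maps factor through the quotient homomorphism $\pi\colon\Z[2^{L}]\to\Z\mathcal{I}_r(\alpha)$ sending a subset $T$ to the isomorphism class $[p_T]$; distinct subsets may yield isomorphic subphrases, and the multiplicities appearing in $\theta_r$ and $\phi_r$ are exactly what this quotient records. Since $\pi$ is a homomorphism, it suffices to verify the inversion identity at the level of subsets, where no collapsing occurs, and then push it forward. Concretely, I would compute
\begin{align*}
\phi_r(\theta_r(p)) &= \sum_{T\subseteq L}\ \sum_{S\subseteq T}(-1)^{\abs{T}-\abs{S}}[p_S] \\
&= \sum_{S\subseteq L}\Biggl(\sum_{S\subseteq T\subseteq L}(-1)^{\abs{T}-\abs{S}}\Biggr)[p_S],
\end{align*}
and evaluate the inner sum by substituting $U=T\setminus S$, which ranges over all subsets of $L\setminus S$. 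This yields $\sum_{U\subseteq L\setminus S}(-1)^{\abs{U}}=(1-1)^{\,n-\abs{S}}$, which is $1$ if $S=L$ and $0$ otherwise, so only the top term survives and $\phi_r(\theta_r(p))=[p]$. Carrying the sign $(-1)^{\rank(p)-\rank(q)}$ through the composition $\theta_r\circ\phi_r$ instead, the same substitution gives the inner factor $(-1)^{\,n-\abs{S}}(1-1)^{\,n-\abs{S}}$, again $1$ exactly when $S=L$, so $\theta_r(\phi_r(p))=[p]$. As both maps are linear, these identities extend to all of $\Z\mathcal{I}_r(\alpha)$, whence $\theta_r$ and $\phi_r$ are mutually inverse bijections.

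I expect the main obstacle to be bookkeeping rather than anything conceptual. One must be scrupulous that $\theta_r$ and $\phi_r$ are well defined on isomorphism classes — both $\rank$ and the multiset of subphrases are isomorphism invariants, so the maps descend from $\Z[2^{L}]$ — and that when $\phi_r$ is applied term by term to $\theta_r(p)$ the subphrases of each $p_T$ are precisely the $p_S$ with $S\subseteq T$, i.e. that the ``subphrase of a subphrase'' relation matches subset inclusion. Once this identification with $2^{L}$ is pinned down, the remaining computation is the standard binomial cancellation $\sum_{j}\binom{m}{j}(-1)^{j}=0$ for $m>0$. A cleaner alternative, which I would also note, is to observe that $(-1)^{\abs{T}-\abs{S}}$ is exactly the Möbius function of $2^{L}$, so that $\theta_r$ is a zeta-type summation and $\phi_r$ its Möbius inverse, making the inverse relationship immediate from Möbius inversion.
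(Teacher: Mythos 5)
Your proof is correct and follows essentially the same route as the paper's: both reduce by linearity to a single nanophrase, expand the composition as a double sum over subphrases (equivalently, subsets of the letter set), interchange the order of summation, and show that the inner alternating sum vanishes unless the inner subphrase is $p$ itself. The only real difference is cosmetic — the paper kills the inner sum by pairing each subphrase containing a chosen missing letter $A$ with the one obtained by deleting $A$, whereas you evaluate it as $(1-1)^{n-\abs{S}}$ via the binomial theorem (Möbius inversion on the Boolean lattice); this is the same cancellation in different clothing.
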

\begin{proof}
We will show that for any $r$-component nanophrase $p$,
$\phi_r \circ \theta_r(p)$ and $\theta_r \circ \phi_r(p)$ are both equal
 to $p$.
By extending this linearly to all of $\Z \mathcal{I}_r(\alpha)$, this
 implies that $\phi_r \circ \theta_r$ and $\theta_r \circ \phi_r$ are
 both equivalent to the identity map and this gives the required
 result.
\par
Now
\begin{align*}
\phi_r \circ \theta_r(p) & = \phi_r(\sum_{q \subphrase p} q) \\
& = \sum_{q \subphrase p} \phi_r(q) \\
& = \sum_{q \subphrase p} \sum_{s \subphrase q} (-1)^{\rank(q)-\rank(s)} s.
\end{align*}
Rearranging the terms and write $I(s,p)$ for the set of nanophrases $q$
 satisfying $s \subphrase q \subphrase p$,
this becomes
\begin{equation*}
\phi_r \circ \theta_r(p) = 
\sum_{s \subphrase p} \left(
\sum_{q \in I(s,p)}
(-1)^{\rank(q)-\rank(s)}
\right) s.
\end{equation*}
We write $c(s)$ for
\begin{equation}\label{eqn:cs}
\sum_{q \in I(s,p)}
(-1)^{\rank(q)-\rank(s)}.
\end{equation}
Then
\begin{equation*}
c(p) = \sum_{q \in I(p,p)}
(-1)^{\rank(q)-\rank(p)} = 1.
\end{equation*}
\par
Let $s$ be any subphrase of $p$ other than $p$ itself.
Then there is a letter, say $A$, which appears in $p$ but not in $s$.
Consider the set of subphrases $q$ in the sum for $c(s)$.
Exactly half of these subphrases contain $A$ and the other half do not.
If $q$ is a subphrase containing $A$, let $q_A$ be the subphrase derived
 from $q$ by deleting $A$.
Note that if $q$ appears in the sum for $c(s)$, so does $q_A$.
Also note that the map defined by deleting the letter $A$ gives a
 bijection from the set of subphrases in the sum for $c(s)$ containing
 the letter $A$ to the set of subphrases in the sum for $c(s)$ which do
 not contain $A$.
Write $\mathcal{Q}(s)$ for the set of subphrases in the sum for $c(s)$
 containing the letter $A$.
Then Equation~\eqref{eqn:cs} becomes
\begin{equation*}
c(s) = \sum_{q \in \mathcal{Q}(s)}
\left(
(-1)^{\rank(q)-\rank(s)} + (-1)^{\rank(q_A)-\rank(s)}
\right).
\end{equation*}
However, the rank of $q_A$ is one less than the rank of $q$ which means that
\begin{equation*}
(-1)^{\rank(q)-\rank(s)} + (-1)^{\rank(q_A)-\rank(s)} = 0
\end{equation*}
and so $c(s)$ is equal to $0$ for all subwords $s$ of $p$ except $p$
 itself.
Thus $\phi_r \circ \theta_r(p)$ equals $p$.
\par
On the other hand,
\begin{align*}
\theta_r \circ \phi_r(p) 
& = \theta_r(\sum_{q \subphrase p} (-1)^{\rank(p)-\rank(q)} q) \\
& = \sum_{q \subphrase p} (-1)^{\rank(p)-\rank(q)} \theta_r(q) \\
& = \sum_{q \subphrase p} (-1)^{\rank(p)-\rank(q)} \sum_{s \subphrase q} s.
\end{align*}
Rearranging the terms, this becomes
\begin{equation*}
\theta_r \circ \phi_r(p) = 
\sum_{s \subphrase p} \left(
\sum_{q \in I(s,p)}
(-1)^{\rank(p)-\rank(q)}
\right) s.
\end{equation*}
It is then easy to check that this is equal to $p$ using a similar
 method to the one we used for $\phi_r \circ \theta_r$.
\end{proof}
\begin{prop}\label{prop:thetar-isomorphism}
The map $\theta_r$ induces an isomorphism from $\Z P(\alpha,\tau,S,r)$ to
 $G(\alpha,\tau,S,r)$.
\end{prop}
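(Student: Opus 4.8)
The plan is to realize both $\Z P(\alpha,\tau,S,r)$ and $G(\alpha,\tau,S,r)$ as quotients of $\Z\mathcal{I}_r(\alpha)$ by explicit subgroups, and to show that the automorphism $\theta_r$ carries the first subgroup exactly onto the second. Write $K_H$ for the subgroup of $\Z\mathcal{I}_r(\alpha)$ generated by the elements $xAAy-xy$, by $xAByBAz-xyz$ (with $\xType{A}=\tau(\xType{B})$) and by $xAByACzBCt-xBAyCAzCBt$ (with $(\xType{A},\xType{B},\xType{C})\in S$), so that $\Z P(\alpha,\tau,S,r)=\Z\mathcal{I}_r(\alpha)/K_H$; and write $K_G$ for the subgroup generated by the three relations defining $G(\alpha,\tau,S,r)$, so that $G(\alpha,\tau,S,r)=\Z\mathcal{I}_r(\alpha)/K_G$. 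Since $\theta_r$ is a bijection with inverse $\phi_r$, it induces an isomorphism of these quotients if and only if $\theta_r(K_H)=K_G$, and establishing this equality is the whole task.

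The main mechanism is that $\theta_r$ splits a subphrase sum according to which of the distinguished letters are retained. First I would prove $\theta_r(K_H)\subseteq K_G$ by evaluating $\theta_r$ on each generator. Grouping the subphrases of $xAAy$ by whether $A$ is kept gives
\[
\theta_r(xAAy-xy)=\sum_{s\,\subphrase\,xy} s^{+A},
\]
where $s^{+A}$ denotes the reinsertion of the pair $AA$ into $s$; every summand is a relation of the first type, so the sum lies in $K_G$. For the second move, grouping by the four ways of keeping or deleting $A$ and $B$ yields, for each $s\subphrase xyz$, the triple $s^{+AB}+s^{+A}+s^{+B}$, which is exactly a relation of the second type. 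The third move is the delicate case: expanding $\theta_r$ over all eight subsets of $\{A,B,C\}$ on each side of the generator, the empty subset contributes $\theta_r(xyzt)$ to both sides, and the three singleton subsets contribute identical subphrases to both sides, so all of these cancel in the difference; what survives, for each $s\subphrase xyzt$, is precisely the eight-term third relation of $G$ built on $s$. Hence $\theta_r$ sends every generator of $K_H$ into $K_G$.

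For the reverse inclusion $K_G\subseteq\theta_r(K_H)$ I would argue by induction on rank. The three computations above show that, for each generator of $K_H$, its image under $\theta_r$ equals the matching $G$-relation, arising as the summand in which no letter is deleted, plus a combination of $G$-relations of strictly smaller rank. Because the leading summand has coefficient $+1$, this triangular relation can be inverted over $\Z$: assuming every $G$-relation of rank less than $N$ already lies in $\theta_r(K_H)$, a $G$-relation of rank $N$ is recovered by subtracting the lower-rank summands from $\theta_r$ of the corresponding $K_H$-generator, and so it too lies in $\theta_r(K_H)$. The base of the induction is immediate, since the only rank-one relation is $xAAy$ with $xy=\trivialp{r}$, for which $\theta_r(xAAy-\trivialp{r})=xAAy$. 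Combining the two inclusions gives $\theta_r(K_H)=K_G$, whence $\theta_r$ descends to an isomorphism $\Z P(\alpha,\tau,S,r)\to G(\alpha,\tau,S,r)$.

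I expect the one genuine obstacle to be the bookkeeping for the third move: one must verify term by term that, in the difference of the two sides, the contributions indexed by subsets of $\{A,B,C\}$ of size at most one cancel, while those indexed by the three two-element subsets together with the full set assemble exactly into the eight terms of the third defining relation of $G$. The first two moves are routine by comparison, and the inductive inversion is purely formal once the triangular structure is in hand.
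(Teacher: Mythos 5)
Your proof is correct, and its second half takes a genuinely different route from the paper's. The first half coincides with the paper's argument: the paper also verifies, generator by generator, that $\theta_r$ sends each homotopy relation into the subgroup of $G$-relations (your $\theta_r(K_H)\subseteq K_G$), with exactly the same grouping of subphrases according to which of the distinguished letters survive. For the reverse direction, however, the paper performs a second, independent computation: it expands $\phi_r$ on each defining relation of $G(\alpha,\tau,S,r)$, carrying the signs $(-1)^{\rank(p)-\rank(q)}$, and checks that the result vanishes in $\Z P(\alpha,\tau,S,r)$ because corresponding terms there are homotopic; the two induced homomorphisms $\hat{\theta}_r$ and the one induced by $\phi_r$ are then mutually inverse. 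You instead observe that your first computation proves more than mere containment: $\theta_r$ of each $K_H$-generator equals the matching $G$-relation plus $G$-relations of strictly smaller rank, i.e., $\theta_r$ is unitriangular with respect to the rank filtration, so $K_G\subseteq\theta_r(K_H)$ follows by induction on rank with no further computation. What the paper's route buys is symmetry and an explicit formula for the inverse isomorphism (induced by $\phi_r$); what yours buys is economy — a single relation-by-relation verification instead of two, with the sign bookkeeping of $\phi_r$ replaced by a formal triangularity argument. Your reduction of the whole problem to the single equality $\theta_r(K_H)=K_G$ is also a correct and clean use of the fact, established in the preceding proposition, that $\theta_r$ is an automorphism of $\Z\mathcal{I}_r(\alpha)$.
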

\begin{proof}
We show that $\theta_r$ is a homomorphism from $\Z P(\alpha,\tau,S,r)$ to
 $G(\alpha,\tau,S,r)$ and that $\phi_r$ is a homomorphism from
 $G(\alpha,\tau,S,r)$ to $\Z P(\alpha,\tau,S,r)$.
\par
We start with $\theta_r$ and check the relations given by each homotopy
 move.
For the move H1 we need to show that
\begin{equation}\label{eqn:hom-h1-toshow}
\theta_r(xAAy) - \theta_r(xy) = 0
\end{equation}
for all nanophrases $xAAy$.
Now 
\begin{equation}\label{eqn:hom-h1}
\theta_r(xAAy) - \theta_r(xy) = 
\sum_{q \subphrase xAAy} q - \sum_{q \subphrase xy} q.
\end{equation}
Note that the set of subphrases of $xAAy$ which do not contain $A$
 are exactly the set of subphrases of $xy$.
Writing $\mathcal{Q}(p)$ for the set of subphrases of $xAAy$ which
 contain $A$, Equation~\eqref{eqn:hom-h1} becomes
\begin{equation*}
\theta_r(xAAy) - \theta_r(xy) = 
\sum_{q \in \mathcal{Q}(p)} q
 + \sum_{q \subphrase xy} q
 - \sum_{q \subphrase xy} q.
\end{equation*}
However, in $G(\alpha,\tau,S,r)$ any nanophrase of the form $uAAv$ is
 $0$ by a relation of the first type.
Thus every nanophrase $q$ in $\mathcal{Q}(p)$ is $0$ and so
 Equation~\eqref{eqn:hom-h1-toshow} holds.
\par
For the move H2 we need to show that
\begin{equation}\label{eqn:hom-h2-toshow}
\theta_r(xAByBAz) - \theta_r(xyz) = 0
\end{equation}
for all nanophrases $xAByBAz$ where $\xType{A}$ is equal to
 $\tau(\xType{B})$.
Write $\mathcal{Q}_{AB}(p)$ for the set of subphrases of $xAByBAz$ which
 contain both $A$ and $B$, $\mathcal{Q}_{A}(p)$ for the set of
 subphrases of $xAByBAz$ which contain $A$ and not $B$,
 $\mathcal{Q}_{B}(p)$ for the set of subphrases of $xAByBAz$ which
 contain $B$ and not $A$ and $\mathcal{Q}(p)$ for the set of subphrases
 of $xAByBAz$ which do not contain $A$ or $B$.
Note that $\mathcal{Q}(p)$ is also the set of subphrases of $xyz$.
Then
\begin{multline}\label{eqn:hom-h2}
\theta_r(xAByBAz) - \theta_r(xyz) = \\
\sum_{q \in \mathcal{Q}_{AB}(p)} q
+ \sum_{q \in \mathcal{Q}_{A}(p)} q
+ \sum_{q \in \mathcal{Q}_{B}(p)} q
+ \sum_{q \in \mathcal{Q}(p)} q
- \sum_{q \in \mathcal{Q}(p)} q.
\end{multline}
For a subphrase $q_{AB}$ in $\mathcal{Q}_{AB}(p)$, let $q_A$ be the
 nanophrase derived from $q_{AB}$ by deleting the letter $B$ and let
 $q_B$ be the nanophrase derived from $q_{AB}$ by deleting the letter
 $A$.
Then the map taking $q_{AB}$ to $q_A$ gives a bijection from
 $\mathcal{Q}_{AB}(p)$ to $\mathcal{Q}_{A}(p)$ and the map taking
 $q_{AB}$ to $q_B$ gives a bijection from $\mathcal{Q}_{AB}(p)$ to
 $\mathcal{Q}_{B}(p)$.
Then Equation~\eqref{eqn:hom-h2} can be rewritten as
\begin{equation*}
\theta_r(xAByBAz) - \theta_r(xyz) = 
\sum_{q_{AB} \in \mathcal{Q}_{AB}(p)} (q_{AB} + q_A + q_B).
\end{equation*}
Now since $\xType{A}$ is equal to $\tau(\xType{B})$ there is a relation
 of the second type which gives
\begin{equation*}
q_{AB} + q_A + q_B = 0
\end{equation*}
for each $q_{AB}$ in $\mathcal{Q}_{AB}$,
and so Equation~\eqref{eqn:hom-h2-toshow} holds.
\par
For the move H3 we need to show that
\begin{equation}\label{eqn:hom-h3-toshow}
\theta_r(xAByACzBCt) - \theta_r(xBAyCAzCBt) = 0
\end{equation}
for all nanophrases $xAByACzBCt$ where $(\xType{A},\xType{B},\xType{C})$
 is in $S$.
Let $\mathcal{Q}_{ABC}(p)$ be the set of subphrases of $xAByACzBCt$
 which contain the letters $A$, $B$ and $C$.
For a nanophrase $q_{ABC}$ in $\mathcal{Q}_{ABC}(p)$, we can derive
 seven more nanophrases by deleting different subsets of the letters
 $A$, $B$ and $C$.
The resulting nanophrases are written $q_{AB}$, $q_{AC}$, $q_{BC}$, 
 $q_A$, $q_B$, $q_C$ and $q$, using similar notation to that used in the
 H2 case.
\par
By applying the H3 move to the letters $A$, $B$ and $C$ to a nanophrase
 $q_{ABC}$ in $\mathcal{Q}_{ABC}(p)$ we get a new nanophrase which we
 label $q^{\prime}_{ABC}$.
As for $q_{ABC}$, we derive seven nanophrases from $q^{\prime}_{ABC}$ by
 deleting  different subsets of the letters $A$, $B$ and $C$.
The resulting nanophrases are written $q^{\prime}_{AB}$,
 $q^{\prime}_{AC}$, $q^{\prime}_{BC}$,  
 $q^{\prime}_A$, $q^{\prime}_B$, $q^{\prime}_C$ and $q^{\prime}$.
Then we have
\begin{multline*}
\theta_r(xAByACzBCt) = \\
\sum_{q_{ABC} \in \mathcal{Q}_{ABC}(p)}
\left( q_{ABC}
 + q_{AB} + q_{AC} + q_{BC}
 + q_A + q_B + q_C
 + q \right)
\end{multline*}
and
\begin{multline*}
\theta_r(xBAyCAzCBt) = \\
\sum_{q_{ABC} \in \mathcal{Q}_{ABC}(p)}
\left( q^{\prime}_{ABC}
 + q^{\prime}_{AB} + q^{\prime}_{AC} + q^{\prime}_{BC}
 + q^{\prime}_A + q^{\prime}_B + q^{\prime}_C
 + q^{\prime}\right).
\end{multline*}
Note that $q^{\prime}_A$, $q^{\prime}_B$, $q^{\prime}_C$ and $q^{\prime}$
 are equal to $q_A$, $q_B$, $q_C$ and $q$ respectively.
Thus
\begin{multline*}
\theta_r(xAByACzBCt) - \theta_r(xBAyCAzCBt)
= \\ \sum_{q_{ABC} \in \mathcal{Q}_{ABC}(p)}
\left(
q_{ABC} + q_{AB} + q_{AC} + q_{BC}
- q^{\prime}_{ABC} - q^{\prime}_{AB} - q^{\prime}_{AC} - q^{\prime}_{BC}
\right).
\end{multline*}
Now since $(\xType{A},\xType{B},\xType{C})$ is in $S$, there is a
 relation of the third type which gives
\begin{equation*}
q_{ABC} + q_{AB} + q_{AC} + q_{BC}
- q^{\prime}_{ABC} - q^{\prime}_{AB} - q^{\prime}_{AC} - q^{\prime}_{BC}
 = 0
\end{equation*}
for each $q_{ABC}$ in $\mathcal{Q}_{ABC}$,
and so Equation~\eqref{eqn:hom-h3-toshow} holds.
\par
We now consider $\phi_r$.
We check each type of relation in $G(\alpha,\tau,S,r)$.
\par
For the first relation we need to show that
\begin{equation}\label{eqn:hom-rel1-toshow}
\phi_r(xAAy) = 0
\end{equation}
for all nanophrases $xAAy$.
Let $\mathcal{Q}_A$ be the set of subphrases of $xAAy$ which contain the
 letter $A$.
For each nanophrase $q_A$ in $\mathcal{Q}_A$ we derive a nanophrase $q$
 be deleting the letter $A$.
Then we have
\begin{equation*}
\phi_r(xAAy) = \sum_{q_{A} \in \mathcal{Q}_{A}(p)}
(-1)^{\rank(p)-\rank(q_{A})}
\left(
q_{A} - q
\right).
\end{equation*}
However, in $P(\alpha,\tau,S,r)$, $q_{A}$ is equal to $q$ for each
 $q_{A}$ in $\mathcal{Q}_A$.
Thus Equation~\eqref{eqn:hom-rel1-toshow} holds.
\par
For the second relation we need to show that
\begin{equation}\label{eqn:hom-rel2-toshow}
\phi_r(xAByBAz) + \phi_r(xAyAz) + \phi_r(xByBz) = 0
\end{equation}
for all nanophrases $xAByBAz$ with $\xType{A}$ equal to
 $\tau(\xType{B})$.
\par
Let $\mathcal{Q}_{AB}(p)$ be the set of subphrases of $xAByBAz$ which
 contain both $A$ and $B$.
For a subphrase $q_{AB}$ in $\mathcal{Q}_{AB}(p)$, let $q_A$ be the
 nanophrase derived from $q_{AB}$ by deleting the letter $A$, let
 $q_B$ be the nanophrase derived from $q_{AB}$ by deleting the letter
 $B$ and let $q$ be the nanophrase derived from $q_A$ by deleting the
 letter $A$.
Then 
\begin{multline*}
\phi_r(xAByBAz) + \phi_r(xAyAz) + \phi_r(xByBz) = \\
\sum_{q_{AB} \in \mathcal{Q}_{AB}(p)}
(-1)^{\rank(p)-\rank(q_{AB})}
\left(
q_{AB} - q_A - q_B + q
+ q_A - q
+ q_B - q
\right) = \\
\sum_{q_{AB} \in \mathcal{Q}_{AB}(p)}
(-1)^{\rank(p)-\rank(q_{AB})}
\left(
q_{AB} - q
\right).
\end{multline*}
Now in $P(\alpha,\tau,S,r)$, $q_{AB}$ is equal to $q$ for each
 $q_{AB}$ in $\mathcal{Q}_{AB}$.
Thus Equation~\eqref{eqn:hom-rel2-toshow} holds.
\par
For the third relation we need to show that
\begin{multline}\label{eqn:hom-rel3-toshow}
\phi_r(xAByACzBCt) + \phi_r(xAByAzBt) + \phi_r(xAyACzCt) +
 \phi_r(xByCzBCt) = \\
\phi_r(xBAyCAzCBt) + \phi_r(xBAyAzBt) + \phi_r(xAyCAzCt) +
 \phi_r(xByCzCBt)
\end{multline}
for all nanophrases $xAByACzBCt$ where $(\xType{A},\xType{B},\xType{C})$
 is in $S$.
\par
Let $\mathcal{Q}_{ABC}(p)$ be the set of subphrases of $xAByACzBCt$
 which contain $A$, $B$ and $C$.
For a nanophrase $q_{ABC}$ in $\mathcal{Q}_{ABC}(p)$, we can derive
 seven more nanophrases by deleting different subsets of the letters
 $A$, $B$ and $C$.
Using the same notation we used before, they are written $q_{AB}$,
 $q_{AC}$, $q_{BC}$, $q_A$, $q_B$, $q_C$ and $q$.
By applying the H3 move to the letters $A$, $B$ and $C$ in $q_{ABC}$ we
 get a new nanophrase which, as before, is labelled $q^{\prime}_{ABC}$.
As before, we derive seven nanophrases from $q^{\prime}_{ABC}$ by 
 deleting  different subsets of the letters $A$, $B$ and $C$.
These nanophrases are written $q^{\prime}_{AB}$,
 $q^{\prime}_{AC}$, $q^{\prime}_{BC}$,  
 $q^{\prime}_A$, $q^{\prime}_B$, $q^{\prime}_C$ and $q^{\prime}$.
However, as we noted before,
 $q^{\prime}_A$, $q^{\prime}_B$, $q^{\prime}_C$ and $q^{\prime}$ 
 are equal to $q_A$, $q_B$, $q_C$ and $q$ respectively.
\par
Writing $\delta(q_{ABC})$ for $\rank(p)-\rank(q_{ABC})$, we have
\begin{equation*}
\phi_r(xAByAzBt) = 
\sum_{q_{ABC} \in \mathcal{Q}_{ABC}(p)}
(-1)^{\delta(q_{ABC})}
\left(
q_{AB} - q_A - q_B + q
\right), 
\end{equation*}
\begin{equation*}
\phi_r(xAyACzCt) = 
\sum_{q_{ABC} \in \mathcal{Q}_{ABC}(p)}
(-1)^{\delta(q_{ABC})}
\left(
q_{AC} - q_A - q_C + q
\right),
\end{equation*}
\begin{equation*}
\phi_r(xByCzBCt) = 
\sum_{q_{ABC} \in \mathcal{Q}_{ABC}(p)}
(-1)^{\delta(q_{ABC})}
\left(
q_{BC} - q_B - q_C + q
\right)
\end{equation*}
and
\begin{multline*}
\phi_r(xAByACzBCt) = \\ 
\sum_{q_{ABC} \in \mathcal{Q}_{ABC}(p)}
(-1)^{\delta(q_{ABC})}
\left(
q_{ABC} - q_{AB} - q_{AC} - q_{BC} + q_A + q_B + q_C - q
\right).
\end{multline*}
We also have
\begin{equation*}
\phi_r(xBAyAzBt) = 
\sum_{q_{ABC} \in \mathcal{Q}_{ABC}(p)}
(-1)^{\delta(q_{ABC})}
\left(
q^{\prime}_{AB} - q_A - q_B + q
\right), 
\end{equation*}
\begin{equation*}
\phi_r(xAyCAzCt) = 
\sum_{q_{ABC} \in \mathcal{Q}_{ABC}(p)}
(-1)^{\delta(q_{ABC})}
\left(
q^{\prime}_{AC} - q_A - q_C + q
\right),
\end{equation*}
\begin{equation*}
\phi_r(xByCzCBt) = 
\sum_{q_{ABC} \in \mathcal{Q}_{ABC}(p)}
(-1)^{\delta(q_{ABC})}
\left(
q^{\prime}_{BC} - q_B - q_C + q
\right)
\end{equation*}
and
\begin{multline*}
\phi_r(xBAyCAzCBt) = \\ 
\sum_{q_{ABC} \in \mathcal{Q}_{ABC}(p)}
(-1)^{\delta(q_{ABC})}
\left(
q^{\prime}_{ABC} - q^{\prime}_{AB} - q^{\prime}_{AC} - q^{\prime}_{BC} +
q_A + q_B + q_C - q
\right).
\end{multline*}
Substituting these equations into
\begin{multline*}
\phi_r(xAByACzBCt) + \phi_r(xAByAzBt) + \phi_r(xAyACzCt) +
 \phi_r(xByCzBCt) \\
- \phi_r(xBAyCAzCBt) - \phi_r(xBAyAzBt) - \phi_r(xAyCAzCt) -
 \phi_r(xByCzCBt)
\end{multline*}
 and using the fact that $q_{ABC}$ is equal to $q^{\prime}_{ABC}$ in
 $P(\alpha,\tau,S,r)$ for each $q_{ABC}$ in $\mathcal{Q}_{ABC}$, gives
Equation~\eqref{eqn:hom-rel3-toshow}.
\end{proof}
\begin{rem}
Proposition~\ref{prop:thetar-isomorphism} corresponds to Theorem~2D in
 \cite{Goussarov/Polyak/Viro:FiniteTypeInvariants}.
\end{rem}
\par
We write $\hat{\theta}_r$ for the isomorphism from
$\Z P(\alpha,\tau,S,r)$ to  $G(\alpha,\tau,S,r)$ induced by $\theta_r$.
\par
We introduce a fourth type of relation parameterized by an integer $n$.
The relation is
\begin{equation*}
p = 0
\end{equation*}
where $p$ is any nanophrase which has rank greater than $n$.
Let $G_n(\alpha,\tau,S,r)$ be the group given by $G(\alpha,\tau,S,r)$
modulo all relations of this fourth type with parameter $n$.
Then $G_n(\alpha,\tau,S,r)$ is generated by the set of $r$-component
nanophrases of rank $n$. 
As this set is finite, $G_n(\alpha,\tau,S,r)$ is a finitely generated
abelian group.
\par
For each positive integer $n$ we define a map $O_n$ from
$P(\alpha,\tau,S,r)$ to itself by
\begin{equation*}
O_n(p) =
\begin{cases}
p & \text{ if } \rank(p) \leq n \\
0 &
 \text{ otherwise}
\end{cases}
\end{equation*}
for any $r$-component nanophrase, and then extending linearly to all of
$P(\alpha,\tau,S,r)$.
Clearly $O_n$ induces a homomorphism from $G(\alpha,\tau,S,r)$ to
$G_n(\alpha,\tau,S,r)$ which we also write $O_n$.
\par
Let $\Gamma_{n,r}$ be the composition of $\hat{\theta}_r$ and $O_n$.
Then $\Gamma_{n,r}$ is a homomorphism from $\Z P(\alpha,\tau,S,r)$ to
$G_n(\alpha,\tau,S,r)$.
\par
For a nanophrase $p$, by linearity of $O_n$, we can write
$\Gamma_{n,r}(p)$ as
\begin{equation*}
\Gamma_{n,r}(p) = \sum_{q \subphrase p} O_n(q).
\end{equation*}
We can rewrite this using angle bracket formulae to get
\begin{equation}\label{eqn:gamma-as-angle}
\Gamma_{n,r}(p) = \sum_{q \in P_{r,n}(\alpha)} 
\langle q,p \rangle q,
\end{equation}
where $P_{r,n}(\alpha)$ is the set of $r$-component nanophrases over
$\alpha$ of rank $n$ or less.
\begin{prop}\label{prop:gamma-universal}
The map $\Gamma_{n,r}$ is a universal invariant of degree $n$.
\end{prop}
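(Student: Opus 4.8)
The plan is to verify the two defining properties of a universal invariant: first that $\Gamma_{n,r}$ is itself a finite type invariant of degree at most $n$, and second that every finite type invariant of degree at most $n$ factors through it. For the first property I would use the angle bracket description \eqref{eqn:gamma-as-angle}, namely $\Gamma_{n,r}(p) = \sum_{q \in P_{r,n}(\alpha)} \langle q,p\rangle\, q$. Since $\Gamma_{n,r}$ is a homomorphism out of $\Z P(\alpha,\tau,S,r)$ it is automatically a homotopy invariant, and since every $q$ occurring in this sum has rank at most $n$, each coefficient $\langle q,p\rangle$ is an angle bracket formula of degree at most $n$. Hence Proposition~\ref{prop:angleisft} (or directly Lemma~\ref{lem:singular-plusn}) shows that $\Gamma_{n,r}$ vanishes on any nanophrase with more than $n$ semi-letters, so $\Gamma_{n,r}$ has degree at most $n$.

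For universality, let $v^\prime$ be a finite type invariant of degree at most $n$ taking values in an abelian group $H$. By Proposition~\ref{prop:thetar-isomorphism}, $\hat{\theta}_r$ is an isomorphism whose inverse is induced by $\phi_r$, so $v^\prime \circ \hat{\theta}_r^{-1}$ is a homomorphism from $G(\alpha,\tau,S,r)$ to $H$. Because $\Gamma_{n,r} = O_n \circ \hat{\theta}_r$ and $O_n \colon G(\alpha,\tau,S,r) \to G_n(\alpha,\tau,S,r)$ is the quotient map that kills exactly the nanophrases of rank greater than $n$, it suffices to show that $v^\prime \circ \hat{\theta}_r^{-1}$ annihilates the kernel of $O_n$. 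The induced map $f \colon G_n(\alpha,\tau,S,r) \to H$ will then satisfy $f \circ \Gamma_{n,r} = f \circ O_n \circ \hat{\theta}_r = v^\prime \circ \hat{\theta}_r^{-1} \circ \hat{\theta}_r = v^\prime$, which is the required factorisation.

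The key step, and the one I expect to carry the real content, is the identification of $\phi_r$ with the semi-letter construction. Recursively applying the defining relation \eqref{eqn:singular-letter} so as to mark every letter of a nanophrase $p$ as a semi-letter produces exactly
\begin{equation*}
\sum_{s \subphrase p} (-1)^{\rank(p)-\rank(s)}\, s = \phi_r(p),
\end{equation*}
so that $\phi_r(p)$ represents $p$ with all $\rank(p)$ of its letters dotted. Consequently, if a generator of $\ker O_n$ is the class in $G(\alpha,\tau,S,r)$ of a nanophrase $p$ with $\rank(p) > n$, then $\hat{\theta}_r^{-1}$ sends it to $\phi_r(p)$, a nanophrase with more than $n$ semi-letters, and therefore $v^\prime(\phi_r(p)) = 0$ because $v^\prime$ has degree at most $n$.

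Thus $v^\prime \circ \hat{\theta}_r^{-1}$ vanishes on every generator of $\ker O_n$, the factorisation $f$ exists, and combined with the first property this proves that $\Gamma_{n,r}$ is a universal invariant of degree $n$. The main obstacle is precisely pinning down the combinatorial identity $\phi_r(p) = \dot{p}$; once $\phi_r$ is recognised as total dotting, the degree hypothesis on $v^\prime$ does the rest, and everything else is formal diagram chasing through the isomorphism $\hat{\theta}_r$ and the quotient $O_n$.
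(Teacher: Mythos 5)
Your proposal is correct and follows essentially the same route as the paper's proof: degree at most $n$ via the angle bracket formula \eqref{eqn:gamma-as-angle} and Proposition~\ref{prop:angleisft}, then factorisation by showing the kernel of $O_n$ is annihilated, using the key identification of $\hat{\theta}_r^{-1}(p)=\phi_r(p)$ with the fully dotted nanophrase so that the degree hypothesis on $v^\prime$ kills generators of rank greater than $n$. The paper phrases this as $\ker(\Gamma_{n,r})=\hat{\theta}_r^{-1}(\ker(O_n))\subseteq\ker(v^\prime)$, which is the same argument in equivalent form.
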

\begin{proof}
The fact that $\Gamma_{n,r}$ is a finite type invariant of degree
 $n$ follows directly by applying Proposition~\ref{prop:angleisft} to
 Equation~\eqref{eqn:gamma-as-angle}.
\par
Now let $v$ be a finite type invariant of degree less than or equal to
 $n$ for nanophrases in $P(\alpha,\tau,S,r)$, taking values in some
 abelian group $H$.
We need to show that there exists a homomorphism $f$ from
 $G_n(\alpha,\tau,S,r)$ to $H$ such that $f \circ \Gamma_{n,r}$ is equal
 to $v$.
It is enough to show that $\ker(\Gamma_{n,r})$ is a subgroup of 
$\ker(v)$.
\par
Now because, by Proposition~\ref{prop:thetar-isomorphism}, $\hat{\theta}_r$ is
 an isomorphism, $\ker(\Gamma_{n,r})$ is equal to
 $\hat{\theta}_r^{-1}(\ker(O_n))$.
Let $k$ be an element of $\ker(O_n)$.
Then $k$ can be written
\begin{equation*}
k = \sum_{i=1}^j \lambda_i k_i,
\end{equation*} 
for some $j$ where each $k_i$ is a nanophrase of rank greater than $n$
and each $\lambda_i$ is in $\Z$.
For each $i$, let $\dot{k}_i$ be the nanophrase derived from $k_i$ by
 changing every letter to be a semi-letter.
Then
\begin{equation*}
\dot{k}_i = \sum_{k^\prime \subphrase k}
 (-1)^{(\rank(k)-\rank(k^\prime))} k^\prime
= \hat{\theta}_r^{-1}(k_i).
\end{equation*}
Thus $\hat{\theta}_r^{-1}(k)$ can be written as a sum of nanophrases of rank
 greater than $n$ where every letter is a semi-letter.
Then $v(\hat{\theta}_r^{-1}(k))$ is equal to $0$ because $v$ is a finite type
 invariant of degree $n$.
Thus $\ker(\Gamma_{n,r})$ is a subgroup of $\ker(v)$.
\end{proof}
\begin{cor}
For any degree $n$ finite type invariant of degree $v$ taking values in an
 abelian group $G$, there exists a finite set of elements of $G$,
 $\{g_1, \dotsc, g_m\}$, such that $v$ can be written in the form
\begin{equation*}
v(p) = 
\sum_{i = 1}^m \langle q_i,p \rangle g_i
\end{equation*}
where each $q_i$ is an element of $\Z P_{r,n}(\alpha)$.
\end{cor}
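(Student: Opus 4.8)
The plan is to derive this directly from the universality of $\Gamma_{n,r}$ established in Proposition~\ref{prop:gamma-universal} together with the explicit angle bracket expansion in Equation~\eqref{eqn:gamma-as-angle}. Since $v$ is a finite type invariant of degree less than or equal to $n$ taking values in the abelian group $G$, and $\Gamma_{n,r}$ is a universal invariant of degree $n$, universality supplies a homomorphism $f$ from $G_n(\alpha,\tau,S,r)$ to $G$ such that $v$ equals $f \circ \Gamma_{n,r}$. This is the one substantive input, and everything else is formal.

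Next I would substitute the angle bracket expression for $\Gamma_{n,r}(p)$ into this identity. For any $r$-component nanophrase $p$ we have
\begin{equation*}
v(p) = f(\Gamma_{n,r}(p)) = f\left( \sum_{q \in P_{r,n}(\alpha)} \langle q, p \rangle q \right).
\end{equation*}
Because each coefficient $\langle q, p \rangle$ is a nonnegative integer and $f$ is a group homomorphism, $f$ commutes with the integer scalar multiplications and with the finite sum, so that
\begin{equation*}
v(p) = \sum_{q \in P_{r,n}(\alpha)} \langle q, p \rangle f(q).
\end{equation*}

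Finally I would use the finiteness of $P_{r,n}(\alpha)$, already noted when $G_n(\alpha,\tau,S,r)$ was shown to be finitely generated. Enumerate $P_{r,n}(\alpha)$ as $q_1, \dotsc, q_m$ and set $g_i = f(q_i)$ in $G$ for each $i$. Then the displayed formula becomes $v(p) = \sum_{i=1}^m \langle q_i, p \rangle g_i$, which is exactly the claimed form, with each $q_i$ an element of $\Z P_{r,n}(\alpha)$ and each $g_i$ in the finite set $\{g_1, \dotsc, g_m\}$.

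There is no real obstacle here; the statement is a bookkeeping consequence of universality. The only points requiring a word of care are that the angle bracket coefficients are genuine integers, which is what lets $f$ be pulled through the sum, and that $P_{r,n}(\alpha)$ is finite, which guarantees the sum is finite and hence that $\{g_1, \dotsc, g_m\}$ is a finite set as required. Both are already available from the preceding material, so the argument is short.
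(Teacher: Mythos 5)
Your proof is correct and is exactly the argument the paper intends: the paper's own proof is the one-line remark that the corollary ``easily follows from Proposition~\ref{prop:gamma-universal} and Equation~\eqref{eqn:gamma-as-angle},'' and your write-up simply makes explicit the steps of composing the homomorphism $f$ from universality with the angle bracket expansion of $\Gamma_{n,r}$ and using the finiteness of $P_{r,n}(\alpha)$. Nothing further is needed.
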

\begin{proof}
This easily follows from Proposition~\ref{prop:gamma-universal} and
 Equation~\eqref{eqn:gamma-as-angle}.
\end{proof} 
\begin{rem}
Proposition~\ref{prop:gamma-universal} corresponds to Theorem~2E in
 \cite{Goussarov/Polyak/Viro:FiniteTypeInvariants}.
\end{rem}
Let $u$ be a universal invariant of degree $n$ for $r$-component
nanophrases.
Then 
\begin{equation*}
u(\Z P(\alpha,\tau,S,r)) \cong \Gamma_{n,r}(\Z P(\alpha,\tau,S,r))
\cong G_n(\alpha,\tau,S,r).
\end{equation*}
Thus we can interpret $G_n(\alpha,\tau,S,r)$ as being the group
isomorphic to the image of $\Z P(\alpha,\tau,S,r)$ under any universal
invariant of degree $n$ for $r$-component nanophrases.
\par
Define $H_n(\alpha,\tau,S,r)$ to be the subgroup of
$G_n(\alpha,\tau,S,r)$ generated by all $r$-component nanophrases except
for the trivial nanophrase $\trivialp{r}$.
We have the following proposition. 
\begin{prop}\label{prop:zh-decomp}
For any $\alpha$, $\tau$, $S$, $r$ and $n$, $G_n(\alpha,\tau,S,r)$ can
 be written in the form
\begin{equation*}
G_n(\alpha,\tau,S,r) \cong \Z \oplus H_n(\alpha,\tau,S,r).
\end{equation*}
\end{prop}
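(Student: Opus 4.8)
The plan is to split off the $\Z$ summand by means of an augmentation homomorphism that records the coefficient of the trivial nanophrase $\trivialp{r}$. First I would define a homomorphism $\epsilon$ from $\Z \mathcal{I}_r(\alpha)$ to $\Z$ by setting $\epsilon(p) = 1$ when the $r$-component nanophrase $p$ is isomorphic to $\trivialp{r}$ and $\epsilon(p) = 0$ otherwise, and extending linearly. The crucial observation is that $\trivialp{r}$ never occurs among the terms of any defining relation of $G_n(\alpha,\tau,S,r)$: every term appearing in a relation of the first, second or third type contains at least one of the letters $A$, $B$, $C$ and hence has rank at least $1$, while every nanophrase killed by a fourth-type relation has rank greater than $n$ and so (as $n \geq 0$) rank at least $1$. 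Therefore $\epsilon$ sends every defining relation to $0$ and descends to a homomorphism $\bar{\epsilon}$ from $G_n(\alpha,\tau,S,r)$ to $\Z$.

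Next I would identify $H_n(\alpha,\tau,S,r)$ with $\ker \bar{\epsilon}$. By definition $H_n(\alpha,\tau,S,r)$ is generated by the images of all $r$-component nanophrases other than $\trivialp{r}$, each of which lies in $\ker \bar{\epsilon}$, so $H_n(\alpha,\tau,S,r) \subseteq \ker \bar{\epsilon}$. For the reverse inclusion, any element $g$ of $G_n(\alpha,\tau,S,r)$ is a $\Z$-linear combination of generators and can thus be written $g = c\,\trivialp{r} + h$ with $c \in \Z$ and $h \in H_n(\alpha,\tau,S,r)$; applying $\bar{\epsilon}$ gives $\bar{\epsilon}(g) = c$, so $g \in \ker \bar{\epsilon}$ forces $c = 0$ and $g = h$. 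Hence $\ker \bar{\epsilon} = H_n(\alpha,\tau,S,r)$.

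Finally I would assemble the direct sum. Since $\bar{\epsilon}(\trivialp{r}) = 1$, the map $\bar{\epsilon}$ is surjective, the element $\trivialp{r}$ has infinite order, and the cyclic subgroup $\langle \trivialp{r} \rangle \cong \Z$ meets $\ker \bar{\epsilon} = H_n(\alpha,\tau,S,r)$ only in $0$; moreover every $g$ decomposes as $g = \bar{\epsilon}(g)\,\trivialp{r} + (g - \bar{\epsilon}(g)\,\trivialp{r})$ with the second summand in $H_n(\alpha,\tau,S,r)$. This exhibits the short exact sequence $0 \to H_n(\alpha,\tau,S,r) \to G_n(\alpha,\tau,S,r) \to \Z \to 0$ in which the final map is $\bar{\epsilon}$, together with the splitting $1 \mapsto \trivialp{r}$, giving $G_n(\alpha,\tau,S,r) \cong \Z \oplus H_n(\alpha,\tau,S,r)$. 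I do not anticipate a genuine obstacle; the only point requiring care is the verification that the trivial nanophrase is absent from all four families of relations, which is precisely what makes the augmentation $\bar{\epsilon}$ well defined.
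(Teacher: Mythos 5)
Your proposal is correct and rests on exactly the same key observation as the paper's (much terser) proof: the trivial nanophrase $\trivialp{r}$ appears in none of the four types of defining relations, so it splits off as a free $\Z$-summand complementary to $H_n(\alpha,\tau,S,r)$. The augmentation homomorphism $\bar{\epsilon}$ and the splitting of the short exact sequence are just a careful formalization of what the paper asserts in one line, so this is essentially the same argument.
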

\begin{proof}
The trivial nanophrase $\trivialp{r}$ does not appear in any of the
 relations of $G_n(\alpha,\tau,S,r)$.
Thus $\trivialp{r}$ is a free generator in $G_n(\alpha,\tau,S,r)$.
\end{proof} 
\begin{cor}
For any $\alpha$, $\tau$, $S$ and $r$,
\begin{equation*}
G_0(\alpha,\tau,S,r) \cong \Z.
\end{equation*}
\end{cor}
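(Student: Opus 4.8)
The plan is to derive this directly from Proposition~\ref{prop:zh-decomp}, which already supplies the decomposition $G_n(\alpha,\tau,S,r) \cong \Z \oplus H_n(\alpha,\tau,S,r)$ for every $n$. Specializing to $n = 0$, it therefore suffices to show that the complementary summand $H_0(\alpha,\tau,S,r)$ is the trivial group; the $\Z$ factor, generated freely by $\trivialp{r}$, then gives the claim immediately.

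Recall that $H_0(\alpha,\tau,S,r)$ is by definition the subgroup of $G_0(\alpha,\tau,S,r)$ generated by all $r$-component nanophrases other than the trivial one $\trivialp{r}$. The key observation is that any nanophrase distinct from $\trivialp{r}$ contains at least one letter and hence has rank at least $1$. Since $G_0(\alpha,\tau,S,r)$ is obtained from $G(\alpha,\tau,S,r)$ by imposing the fourth type of relation with parameter $0$---that is, setting every nanophrase of rank greater than $0$ equal to $0$---each of these generators is already $0$ in $G_0(\alpha,\tau,S,r)$.

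Consequently every generator of $H_0(\alpha,\tau,S,r)$ vanishes, so $H_0(\alpha,\tau,S,r)$ is the zero group, and the decomposition of Proposition~\ref{prop:zh-decomp} collapses to $G_0(\alpha,\tau,S,r) \cong \Z$. I do not expect any genuine obstacle in this argument: the result is an immediate consequence of the rank-based fourth relation killing every nontrivial generator, once the decomposition of Proposition~\ref{prop:zh-decomp} is in hand. The only point requiring a moment's care is the elementary remark that the trivial nanophrase is the unique $r$-component nanophrase of rank $0$, which is what isolates it from all the generators of $H_0(\alpha,\tau,S,r)$.
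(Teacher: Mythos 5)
Your argument is correct and is exactly the paper's proof: invoke Proposition~\ref{prop:zh-decomp} to split off the $\Z$ summand generated by $\trivialp{r}$, then note that every other generator has rank at least $1$ and hence vanishes in $H_0(\alpha,\tau,S,r)$ by the fourth-type relations with parameter $0$. No difference in approach and no gaps.
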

\begin{proof}
Just observe that $H_0(\alpha,\tau,S,r)$ is trivial by relations of the
 fourth type.
\end{proof}
For any $r$-component nanophrase $p$, using the above proposition and
Equation~\eqref{eqn:gamma-as-angle}, $\Gamma_{n,r}(p)$ can be written
in the form
\begin{equation*}
\Gamma_{n,r}(p) = h + \trivialp{r},
\end{equation*}
where $h$ is an element in $H_n(\alpha,\tau,S,r)$.
This shows that the restriction of $\Gamma_{n,r}$ to
$P(\alpha,\tau,S,r)$ is not surjective on $G_n(\alpha,\tau,S,r)$.
\par
We can agree to normalize our finite type invariant so that
$\trivialp{r}$ maps to $0$.
Let $v$ be a universal invariant normalized in this way.
Then image of $\Z P(\alpha,\tau,S,r)$ under $v$ is isomorphic to
$H_n(\alpha,\tau,S,r)$.
Thus $H_n(\alpha,\tau,S,r)$ can be viewed as the maximal space of values
that nanophrases in $P(\alpha,\tau,S,r)$ can take under normalized
finite type invariants of degree $n$ or less.
In particular, if $H_n(\alpha,\tau,S,r)$ is finite,
finite type invariants of degree $n$ or less can only classify
nanophrases in $P(\alpha,\tau,S,r)$ into a finite number of equivalence
classes.
On the other hand, for all $\alpha$, $\tau$, $S$ and $r$,
$P(\alpha,\tau,S,r)$ has an infinite number of elements.
\par
We end this section by noting some relations between the groups
$G_n(\alpha,\tau,S,r)$.
\begin{prop}
Let $(\alpha,\tau,S)$ be homotopy data and $n$ and $r$ be
positive integers.
Let $S^\prime$ be a subset of $S$.
Let $\beta$ be a set such that the number of elements is less than or
 equal to the number of elements in $\alpha$ and let $\tau_{\beta}$ be an
 involution on $\beta$.
Let $f$ be a surjective map from $\alpha$ to $\beta$ which satisfies
 $\tau_{\beta}(f(a)) = f(\tau(a))$ for all $a$ in $\alpha$. 
Then
\begin{itemize}
\item[(i)]
The identity map gives a homomorphism from $G_{n+1}(\alpha,\tau,S,r)$ to
$G_n(\alpha,\tau,S,r)$;
\item[(ii)]
The identity map gives a surjective homomorphism from
 $G_n(\alpha,\tau,S^\prime,r)$ to $G_n(\alpha,\tau,S,r)$;
\item[(iii)]
There is a surjective homomorphism from $G_n(\alpha,\tau,S,r)$ to
 $G_n(\beta,\tau_\beta,f(S),r)$.
\end{itemize}
\end{prop}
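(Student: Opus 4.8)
The plan is to realize all three groups in sight as quotients of free abelian groups of nanophrases modulo isomorphism, and in each case to produce the required map by checking that it respects the defining relations; surjectivity will then follow from a comparison of generating sets.

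For (i) and (ii) the source and target are quotients of the \emph{same} group $\Z \mathcal{I}_r(\alpha)$. Recall that $G_n(\alpha,\tau,S,r)$ is $\Z \mathcal{I}_r(\alpha)$ modulo the first, second and third types of relation together with the fourth type with parameter $n$. For (i) I would simply note that the defining relations of $G_{n+1}(\alpha,\tau,S,r)$ form a subset of those of $G_n(\alpha,\tau,S,r)$: the first three types coincide, and any fourth-type relation with parameter $n+1$, which kills a nanophrase of rank greater than $n+1$, is in particular a fourth-type relation with parameter $n$. Hence the quotient map $\Z \mathcal{I}_r(\alpha) \to G_n(\alpha,\tau,S,r)$ annihilates every relation of $G_{n+1}(\alpha,\tau,S,r)$ and so factors through it, giving a homomorphism that on each generating nanophrase is the identity (a nanophrase of rank $n+1$ mapping to its class, which is $0$ in the target). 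For (ii) the only discrepancy between the relations of $G_n(\alpha,\tau,S',r)$ and $G_n(\alpha,\tau,S,r)$ is among the third type, and since $S' \subseteq S$ every third-type relation for $S'$ is also one for $S$; the identity on $\Z \mathcal{I}_r(\alpha)$ therefore descends, and it is surjective because both groups are generated by the same nanophrases of rank at most $n$, each mapping to itself.

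For (iii) the source and target are quotients of different groups, $\Z \mathcal{I}_r(\alpha)$ and $\Z \mathcal{I}_r(\beta)$. Here I would first build a ``re-typing'' homomorphism $F \colon \Z \mathcal{I}_r(\alpha) \to \Z \mathcal{I}_r(\beta)$ sending an $r$-component nanophrase over $\alpha$ to the nanophrase with the same underlying Gauss phrase but with each type $\xType{A}$ replaced by $f(\xType{A})$. As an isomorphism of $\alpha$-nanophrases remains one after composing the typing maps with $f$, the map $F$ is well defined on isomorphism classes. The heart of the argument is to check that $F$ carries each defining relation of $G_n(\alpha,\tau,S,r)$ into a defining relation of $G_n(\beta,\tau_\beta,f(S),r)$. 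A first-type relation maps to a first-type relation, and since $F$ preserves the Gauss phrase it preserves rank, so fourth-type relations are respected. For a second-type relation the hypothesis $\xType{A}=\tau(\xType{B})$ gives $f(\xType{A}) = f(\tau(\xType{B})) = \tau_\beta(f(\xType{B}))$ using $\tau_\beta \circ f = f \circ \tau$, so the image is again a second-type relation; and for a third-type relation the triple $(f(\xType{A}),f(\xType{B}),f(\xType{C}))$ lies in $f(S)$ by the very definition of $f(S)$. Thus $F$ descends to a homomorphism $G_n(\alpha,\tau,S,r) \to G_n(\beta,\tau_\beta,f(S),r)$.

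Surjectivity in (iii) is where the surjectivity of $f$ is used: a generating nanophrase over $\beta$ of rank at most $n$ is lifted by choosing, for each of its letters, a preimage under $f$ of that letter's type; the resulting $\alpha$-nanophrase has the same Gauss phrase, hence the same rank, and maps onto the chosen generator. The only genuinely content-bearing steps are the two relation checks in (iii), which are precisely what the compatibility $\tau_\beta \circ f = f \circ \tau$ and the definition of $f(S)$ are designed to make work; everything else is bookkeeping about which relation set is contained in which.
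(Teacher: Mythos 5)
Your proposal is correct and takes essentially the same approach as the paper: parts (i) and (ii) by observing that the defining relation sets of the relevant quotients of $\Z \mathcal{I}_r(\alpha)$ are nested, and part (iii) by constructing the induced re-typing map (composing the $\alpha$-alphabet's structure map with $f$) and verifying it sends each type of relation to the corresponding type, with surjectivity from surjectivity of $f$. You merely carry out explicitly the relation checks that the paper dismisses as ``a simple exercise,'' and you state the containment in (ii) in the correct direction, namely that the third-type relations for $S^\prime$ form a subset of those for $S$, where the paper's wording appears to have $S$ and $S^\prime$ transposed.
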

\begin{proof}
Statement~(i) follows immediately from the definitions.
Statement~(ii) also follows easily from the definitions because the
 relations of the third type in $G_n(\alpha,\tau,S,r)$ are a subset of
 those in $G_n(\alpha,\tau,S^\prime,r)$.
\par
We now prove Statement~(iii).
The map $f$ induces a map, which we also call $f$, from $P(\alpha)$ to 
$P(\beta)$ as follows.
Let $(\mathcal{A},p)$ be a nanophrase in $P(\alpha,\tau,S)$ and let
 $\varepsilon$ be the map from $\mathcal{A}$ to $\alpha$.
Then define $\mathcal{B}$ to be a $\beta$-alphabet with the same set as
 $\mathcal{A}$ and the map to $\beta$ given by $f \circ \varepsilon$.
Then $f(\mathcal{A},p)$ is defined to be $(\mathcal{B},p)$.
It is then a simple exercise to check that $f$ induces a homomorphism
 from $G_n(\alpha,\tau,S,r)$ to $G_n(\beta,\tau_\beta,f(S),r)$.
Surjectivity follows from the fact that the original map $f$ is
 surjective.
\end{proof}
\section{Finite type invariants of degree $1$}
The linking matrix of a nanophrase was defined by Fukunaga in
\cite{Fukunaga:nanophrases2}.
It is a homotopy invariant under any homotopy of nanophrases
\cite{Fukunaga:nanophrases2}, \cite{Gibson:factor-homotopy}.
We recall the definition here.
\par
First, let $\pi$ be the abelian group generated by elements in
$\alpha$ with the relations $a + \tau(a) = 0$ for all $a$ in $\alpha$.
In \cite{Fukunaga:nanophrases2}, $\pi$ is written multiplicatively, but
here we will write it additively.
For an $r$-component nanophrase $p$, the \emph{linking matrix}
of $p$ is defined as follows.
Let $\mathcal{A}_{ij}(p)$ be the set of letters which have one occurence
in the $i$th component of $p$ and the other occurence in the $j$th
component of $p$.
Let $l_{ii}(p)$ be $0$ and, when $j$ is not equal to $i$, let
$l_{ij}(p)$ be 
\begin{equation*}
l_{ij}(p) = \sum_{X \in \mathcal{A}_{ij}(p)} \xType{X},
\end{equation*}
for $i$ and $j$ positive integers less than or equal to $r$.
Let $L(p)$ be the symmetric $r \times r$ matrix given by the
$l_{ij}(p)$.
\begin{thm}\label{thm:linkingmatrix-deg1}
The linking matrix is a degree $1$ finite type invariant.
\end{thm}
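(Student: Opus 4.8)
The plan is to realise each entry of the linking matrix as a $\pi$-valued combination of degree-one angle bracket formulae and then apply Lemma~\ref{lem:singular-plusn}. For $1 \le i < j \le r$ and $a \in \alpha$, let $q^a_{ij}$ denote the rank-one $r$-component nanophrase whose single (doubled) letter has type $a$ with its two occurrences lying in components $i$ and $j$, all other components being trivial. First I would verify the combinatorial identity
\begin{equation*}
l_{ij}(p) = \sum_{a \in \alpha} \langle q^a_{ij}, p \rangle\, a
\end{equation*}
in $\pi$, for every $r$-component nanophrase $p$. This reduces to the observation that deleting every letter of $p$ except a single letter $X \in \mathcal{A}_{ij}(p)$ with $\xType{X}=a$ produces a subphrase isomorphic to $q^a_{ij}$, and that this assignment is a bijection between such subphrases and the type-$a$ letters of $\mathcal{A}_{ij}(p)$; summing over $a$ then recovers $\sum_{X\in\mathcal{A}_{ij}(p)}\xType{X}$.

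Next I would extend this identity linearly to $\Z P(\alpha,\tau,S,r)$, which is legitimate because the angle bracket is linear in its second argument. Let $q$ be any nanophrase with more than one semi-letter. Since each $q^a_{ij}$ has rank one, Lemma~\ref{lem:singular-plusn} gives $\langle q^a_{ij}, q\rangle = 0$ for every $a$, $i$ and $j$, so every off-diagonal entry $l_{ij}(q)$ vanishes; the diagonal entries vanish by definition. Hence $L(q)=0$ whenever $q$ has more than one semi-letter, and since $L$ is already known to be a homotopy invariant taking values in the abelian group of symmetric $r \times r$ matrices over $\pi$, it is a finite type invariant of degree at most one.

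Finally I would rule out degree zero. By the proposition that every finite type invariant of degree zero is constant, it suffices to exhibit two nanophrases distinguished by $L$. For a type $a$ with $a \ne 0$ in $\pi$, the value on the single-semi-letter nanophrase $\dot{A}|\dot{A}$ (with $\xType{A}=a$) is $L(A|A)-L(\trivialp{2})$, the symmetric matrix carrying $a$ in positions $(1,2)$ and $(2,1)$, which is nonzero; equivalently $L$ separates $A|A$ from $\trivialp{2}$. Thus $L$ is not constant, so its degree is exactly one.

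I expect the only genuine work to be the counting identity of the first paragraph: one must check that deleting letters leaves the `$|$' separators, hence the component structure, untouched, and that an isomorphism of nanophrases cannot permute components, so that subphrases isomorphic to $q^a_{ij}$ correspond precisely to the letters of $\mathcal{A}_{ij}(p)$ of type $a$. Once this bookkeeping is in place the degree bound is immediate from Lemma~\ref{lem:singular-plusn}, and the conceptual reason behind it is transparent: the linking matrix is additive over the letters of a phrase, each letter contributing independently of the others, which is exactly what forces the second difference across two semi-letters to collapse to zero.
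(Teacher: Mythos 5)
Your proposal is correct, but it takes a genuinely different route from the paper's own proof. The paper argues directly from the definition of semi-letters: it takes a nanophrase $p_{\dot{A}\dot{B}}$ with two semi-letters, expands $L(p_{\dot{A}\dot{B}}) = L(p_{AB}) - L(p_A) - L(p_B) + L(p)$, and verifies entrywise that $l_{ij}(p_{AB}) - l_{ij}(p_A) - l_{ij}(p_B) + l_{ij}(p) = 0$ by a short case analysis on whether $A$ and $B$ lie in $\mathcal{A}_{ij}(p_{AB})$; degree $0$ is then excluded by non-triviality of $L$. You instead realise each entry $l_{ij}$ as a $\pi$-valued combination of rank-one angle bracket formulae, $l_{ij}(p) = \sum_{a \in \alpha} \langle q^a_{ij}, p\rangle\, a$, and let Lemma~\ref{lem:singular-plusn} do all the work. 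This is precisely the alternative the authors flag in the remark following their proof, where they note the entries $l_{ija}$ are given by degree $1$ angle bracket formulae (stated there with respect to an orientation $\alpha_o$, which handles the $\Z$ versus $\cyclic{2}$ coefficients; your version, summing over all of $\alpha$ with values in $\pi$, is an equivalent packaging). One point in your favour: you correctly invoke Lemma~\ref{lem:singular-plusn} directly rather than Proposition~\ref{prop:angleisft}, which as stated applies to a single $\Z$-valued formula $\langle u, \cdot\rangle$ rather than a $\pi$-linear combination of them. What each approach buys: the paper's computation is elementary and self-contained, exhibiting concretely the additivity that makes the second difference collapse; yours is shorter, and the identity $l_{ij}(p)=\sum_a \langle q^a_{ij},p\rangle a$ is reusable (the paper itself uses it later, e.g.\ in the proof of Theorem~\ref{thm:deg1-lm} and in Examples~\ref{ex:angle-ft-degree-a} and~\ref{ex:angle-ft-degree-b}). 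Your exclusion of degree $0$ via $L(\dot{A}|\dot{A}) = L(A|A) - L(\trivialp{2}) \neq 0$ is also fine, and is if anything tied more directly to the definition of degree than the paper's appeal to non-constancy; note that both arguments tacitly require $r \geq 2$, since for nanowords the linking matrix is identically zero.
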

\begin{proof}
Let $p_{\singular{A}\singular{B}}$ be a nanophrase with two
 semi-letters, $\singular{A}$ and $\singular{B}$. 
Let $p_{AB}$ be the nanophrase given by changing both $\singular{A}$ and
 $\singular{B}$ in $p_{\singular{A}\singular{B}}$ to $A$ and $B$.
Let $p$ be the nanophrase given by removing both $\singular{A}$ and
 $\singular{B}$ from $p_{\singular{A}\singular{B}}$.
The nanophrase $p_A$ is the nanophrase given by removing $A$ from
 $p_{AB}$, and $p_B$ is the nanophrase given by removing $A$ from
 $p_{AB}$.
\par
By definition,
\begin{equation*}
L(p_{\singular{A}\singular{B}}) = L(p_{AB}) - L(p_A) - L(p_B) + L(p),
\end{equation*}
so we just need to show that
\begin{equation*}
L(p_{AB}) - L(p_A) - L(p_B) + L(p) = \zeromatrix
\end{equation*}
where $\zeromatrix$ is the zero matrix.
In fact, we show that for each $i$ and each $j$ we have
\begin{equation}\label{eqn:linkingmatrix-ft}
l_{ij}(p_{AB}) - l_{ij}(p_A) - l_{ij}(p_B) + l_{ij}(p) = 0.
\end{equation}
\par
If $i$ is equal to $j$, then Equation \eqref{eqn:linkingmatrix-ft}
 obviously holds.
Now assume that $i$ is not equal to $j$.
If $A$ is not in $\mathcal{A}_{ij}(p_{AB})$, then $l_{ij}(p_{AB})$ is
 equal to $l_{ij}(p_B)$ and $l_{ij}(p_A)$ is equal to $l_{ij}(p)$, which
 implies that Equation  \eqref{eqn:linkingmatrix-ft} holds.
Similarly, if $B$ is not in $\mathcal{A}_{ij}(p_{AB})$,
 Equation \eqref{eqn:linkingmatrix-ft} also holds.
The last case to consider is where both $A$ and $B$ are in
 $\mathcal{A}_{ij}(p_{AB})$.
In this case we have 
\begin{align*}
l_{ij}(p_{AB}) = & \xType{A} + \xType{B} + l_{ij}(p), \\
l_{ij}(p_A) = & \xType{A} + l_{ij}(p) \text{ and} \\
l_{ij}(p_B) = & \xType{B} + l_{ij}(p).
\end{align*}
Then Equation \eqref{eqn:linkingmatrix-ft} holds.
\par
We have shown that the linking matrix is a finite type invariant of
 degree less than or equal to $1$.
However, the linking matrix is a non-trivial invariant, so it cannot
 have degree $0$.
\end{proof}
An \emph{orientation} of $\alpha$ is a subset of $\alpha$ which
intersects with each orbit of $\alpha$ under $\tau$ in exactly one
element.
Let $\alpha_o$ be an orientation of $\alpha$.
Then, with respect to $\alpha_o$, any element $g$ of $\pi$ can be
represented uniquely as
\begin{equation*}
g = \sum_{a \in \alpha_o} c_a a,
\end{equation*}
where $c_a$ is in $\Z$ if $a$ equals $\tau(a)$ and in $\cyclic{2}$
otherwise.
We call $c_a$ the coefficient of the $a$ term in $g$.
\par
Let $g_{i,j,a}$ denote the nanophrase of the form
$\dotso|A|\dotso|A|\dotso$, where $i$ is the number of the component in
which the first $A$ occurs, $j$ is the number of the component in which
the second $A$ occurs and $a$ is $\xType{A}$.
For a nanophrase $p$ and an element $a$ in $\alpha_o$, let $l_{ija}(p)$
be the coefficient of the $a$ term in $l_{ij}(p)$. 
If $a$ is not equal to $\tau(a)$, $l_{ija}(p)$ is in $\Z$ and is given
by the following angle bracket formula:
\begin{equation*}
l_{ija}(p) = \langle g_{i,j,a} - g_{i,j,\tau(a)}, p \rangle.
\end{equation*}
If $a$ is equal to $\tau(a)$, $l_{ija}(p)$ is in $\cyclic{2}$ and is
given by the following angle bracket formula:
\begin{equation*}
l_{ija}(p) = \langle g_{i,j,a}, p \rangle \mod 2.
\end{equation*}
\begin{rem}
Since the linking matrix can be calculated from degree $1$ angle bracket
 formulae and the linking matrix is a non-trivial invariant (which
 implies it cannot be a finite type invariant of degree $0$), we could
 have proved Theorem~\ref{thm:linkingmatrix-deg1} just by using
 Proposition~\ref{prop:angleisft}.
\end{rem} 
The following theorem states that the linking matrix essentially
contains all finite type invariants of degree $1$.
\begin{thm}\label{thm:deg1-lm}
Let $p$ and $q$ be two $r$-component nanophrases.
If there exists a finite type invariant $v$ of degree $1$ such that
 $v(p)$ is not equal to $v(q)$, $p$ and $q$ can be distinguished by the
 linking matrix.
\end{thm}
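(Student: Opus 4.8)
The plan is to prove the contrapositive by identifying the linking matrix with the universal finite type invariant of degree $1$. By Proposition~\ref{prop:gamma-universal} the map $\Gamma_{1,r}$ is a universal invariant of degree $1$, so any finite type invariant $v$ of degree $1$ can be written as $v = f \circ \Gamma_{1,r}$ for some homomorphism $f$. Hence it suffices to show that $L(p) = L(q)$ implies $\Gamma_{1,r}(p) = \Gamma_{1,r}(q)$: granting this, $v(p) = f(\Gamma_{1,r}(p)) = f(\Gamma_{1,r}(q)) = v(q)$ for every such $v$, so two nanophrases with equal linking matrices cannot be separated by any degree $1$ invariant, which is exactly the contrapositive of the theorem.

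First I would compute the target group $G_1(\alpha,\tau,S,r)$. Its generators are the $r$-component nanophrases of rank at most $1$, namely $\trivialp{r}$ and the rank one phrases $g_{i,j,a}$. The point is to read each defining relation of $G(\alpha,\tau,S,r)$ inside $G_1$, where the fourth-type relation first kills every term of rank greater than $1$. A first-type relation then contributes only $g_{i,i,a} = 0$. The leading term of a second-type relation has rank $2$ and so vanishes; taking $x$, $y$, $z$ to consist only of separators, the two surviving rank one terms put the single remaining letter in the same pair of components, leaving precisely $g_{i,j,a} + g_{i,j,\tau(a)} = 0$. Every term of a third-type relation has rank at least $2$ and so imposes nothing. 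Thus $G_1$ is independent of $S$, and the subgroup $H_1(\alpha,\tau,S,r)$ is generated by the $g_{i,j,a}$ with $i < j$ subject only to $g_{i,j,a} + g_{i,j,\tau(a)} = 0$; for each pair $i < j$ this is one copy of $\pi$, with no relations linking distinct pairs, so $H_1 \cong \pi^{\binom{r}{2}}$.

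Next I would make the identification explicit. By Equation~\eqref{eqn:gamma-as-angle} together with the relations above, $\Gamma_{1,r}(p)$ has $\trivialp{r}$-coordinate $1$ and, in the copy of $\pi$ indexed by a pair $i < j$, the element $\sum_{a} \langle g_{i,j,a}, p \rangle\, a$. Since $\langle g_{i,j,a}, p \rangle$ counts the letters $X$ of $p$ with $\xType{X} = a$ having one occurrence in component $i$ and the other in component $j$, this sum equals $\sum_{X \in \mathcal{A}_{ij}(p)} \xType{X} = l_{ij}(p)$. Hence the $H_1$-part of $\Gamma_{1,r}(p)$ is the tuple $(l_{ij}(p))_{i<j}$, which, together with the vanishing diagonal and symmetry, is exactly the data of the linking matrix $L(p)$.

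Combining these observations, $\Gamma_{1,r}(p)$ and $L(p)$ determine one another, so $L(p) = L(q)$ forces $\Gamma_{1,r}(p) = \Gamma_{1,r}(q)$ and the argument closes. The step requiring the most care, and the main obstacle, is the computation of $G_1$: one must verify that after imposing the fourth-type relation no further relations survive among the rank one generators beyond $g_{i,j,a} + g_{i,j,\tau(a)} = 0$ — in particular that the third-type relations and the rank $2$ part of the second-type relations contribute nothing and that no relation identifies generators attached to different pairs of components — so that $H_1$ really is $\pi^{\binom{r}{2}}$ with no unexpected collapsing.
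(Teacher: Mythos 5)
Your proposal is correct and follows essentially the same route as the paper: compute $G_1(\alpha,\tau,S,r) \cong \Z \oplus \bigoplus_{1 \leq i < j \leq r} \pi$ by checking that only the relations $g_{i,j,a} + g_{i,j,\tau(a)} = 0$ survive truncation, identify the $H_1$-part of the universal invariant $\Gamma_{1,r}$ with the linking matrix, and conclude via universality. Your write-up is somewhat more explicit than the paper's (which leaves the identification of $\Gamma'$ with $L$ as an easy check), but the decomposition, key steps, and logic are the same.
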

\begin{proof}
We calculate the group $G_1(\alpha,\tau,S,r)$ for arbitrary $\alpha$,
 $\tau$, $S$ and $r$.
By Proposition~\ref{prop:zh-decomp}, $G_1(\alpha,\tau,S,r)$ has the form
\begin{equation*}
G_1(\alpha,\tau,S,r) \cong \Z \oplus H_1(\alpha,\tau,S,r).
\end{equation*}
In $H_1(\alpha,\tau,S,r)$, any nanophrase of rank $2$ or more is equal
 to $0$ by definition.
Rank $1$ nanophrases over $\alpha$ either have the form
 $\dotso|AA|\dotso$ or the form $\dotso|A|\dotso|A|\dotso$.
By relations of the first type, generators of the first form must be
 equal to zero, so we may delete them.
The generators of the second form are the nanophrases $g_{i,j,a}$.
\par
Note that relations of the third type only involve generators that have
 two or more letters.
Since all these generators are zero, these relations trivially hold in
 $H_1(\alpha,\tau,S,r)$ and so we may delete them.
\par
So we have a presentation for $H_1(\alpha,\tau,S,r)$ where the
 generators are the set of $g_{i,j,a}$, for all $i$ and $j$ between $1$
 and $r$ ($i$ less than $j$) and for all $a$ in $\alpha$.
The relations, which are all derived from relations of the second type,
 are
\begin{equation*}
g_{i,j,a} + g_{i,j,\tau(a)}= 0
\end{equation*}
for all $i$ and $j$ between $1$ and $r$ ($i$ less than $j$) and for
 all $a$ in $\alpha$.
Then for any $a$ in $\alpha$ for which $a$ is equal to $\tau(a)$,
the generators $g_{i,j,a}$ (for all $i$ and $j$, $i$ less than $j$) have
 order $2$ in $G_1(\alpha,\tau,S,r)$.
Any other generator has infinite order.
\par
It is easy to check that 
\begin{equation*}
H_1(\alpha,\tau,S,r) \cong \bigoplus_{1 \leq i < j \leq r} H_{ij}
\end{equation*} 
where each $H_{ij}$ is isomorphic to $\pi$.
Recalling that the linking matrix is symmetrical and that the elements
 on the leading diagonal are all $0$, we see that the linking matrix of
 a nanophrase can also be considered as an element of
 $H_1(\alpha,\tau,S,r)$.
\par
Now $G_1(\alpha,\tau,S,r)$ is isomorphic to
\begin{equation*}
\Z \oplus H_1(\alpha,\tau,S,r).
\end{equation*}
Thus we can consider the map $\Gamma_{n,r}$ defined in
 Section~\ref{sec:universal} as mapping an $r$-component nanophrase $p$
 to a pair $(c(p),h(p))$ where $c(p)$ is in $\Z$ and $h(p)$ is in
 $H_1(\alpha,\tau,S,r)$.
Define $\Gamma^\prime$ to be the map from $\Z P(\alpha,\tau,S,r)$
 to $H_1(\alpha,\tau,S,r)$ which maps $p$ to $h(p)$.
Considering the linking matrix as a map from $\Z P(\alpha,\tau,S,r)$
 to $H_1(\alpha,\tau,S,r)$, it is easy to check that
 $\Gamma^\prime$ is equivalent to the linking matrix.
That is, for two $r$-component nanophrases $p$ and $q$,
 $\Gamma^\prime(p)$ is equal to $\Gamma^\prime(q)$ if and only if the
 linking matrices of $p$ and $q$ are the same.
Thus if $v$ is a finite type invariant of degree $1$ which distinguishes
 $p$ and $q$, it must be the case that $\Gamma^\prime(p)$ is not equal
 to $\Gamma^\prime(q)$ and so the linking matrices of $p$ and $q$
 differ.
\end{proof}
\begin{cor}\label{cor:nwft-nodegree1}
For any homotopy of nanowords there are no finite type invariants of
 degree $1$.
\end{cor}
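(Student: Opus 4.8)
The plan is to deduce the corollary directly from Theorem~\ref{thm:deg1-lm}, together with the observation that the linking matrix carries no information for a single component. First I would note that a nanoword is a $1$-component nanophrase, so I set $r = 1$. The linking matrix $L(p)$ of a nanoword is then a $1 \times 1$ matrix whose unique entry is $l_{11}(p)$, which by definition equals $0$. Hence $L(p) = \zeromatrix$ for every nanoword $p$, and so the linking matrix takes the same value on all nanowords.

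Next I would argue by contradiction. Suppose $v$ is a finite type invariant of degree exactly $1$ for some homotopy of nanowords. By definition its degree is not $0$, so $v$ is not constant on nanowords; that is, there exist nanowords $p$ and $q$ with $v(p) \neq v(q)$. Applying Theorem~\ref{thm:deg1-lm} with $r = 1$, the nanowords $p$ and $q$ must then be distinguishable by the linking matrix, so $L(p) \neq L(q)$. This contradicts the previous paragraph, where we saw that $L$ is identically $\zeromatrix$ on nanowords. Therefore no finite type invariant of degree $1$ exists, for any choice of homotopy data $(\alpha,\tau,S)$.

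I would then remark that this is exactly the degree-$1$ shadow of the structural computation inside the proof of Theorem~\ref{thm:deg1-lm}: there one finds $H_1(\alpha,\tau,S,r) \cong \bigoplus_{1 \le i < j \le r} H_{ij}$, and for $r = 1$ the index set is empty, so $H_1(\alpha,\tau,S,1)$ is the trivial group and $G_1(\alpha,\tau,S,1) \cong \Z$. Equivalently, the universal degree-$1$ invariant $\Gamma_{1,1}$ sends every nanoword to the generator $\trivialp{1}$, so every finite type invariant of degree $\le 1$ is constant on nanowords and hence has degree $0$.

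The only point that needs care is the bookkeeping of degrees: one must use that the degree of an invariant is the \emph{least} $n$ for which it vanishes on phrases with more than $n$ semi-letters, so that a genuine degree-$1$ invariant is non-constant and must therefore separate some pair of nanowords. Beyond this there is essentially no obstacle, since all the substantive work has already been carried out in Theorem~\ref{thm:deg1-lm}.
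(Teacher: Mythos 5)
Your proof is correct and is exactly the argument the paper intends: the corollary is stated as an immediate consequence of Theorem~\ref{thm:deg1-lm}, since for $r=1$ the linking matrix is identically the zero $1\times 1$ matrix, so no degree-$1$ invariant can separate nanowords, and a genuine degree-$1$ invariant must separate some pair. Your closing remark about $H_1(\alpha,\tau,S,1)$ being trivial (the empty direct sum in the proof of Theorem~\ref{thm:deg1-lm}) matches the structural computation already present in the paper, so nothing here deviates from the paper's route.
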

We now give some examples of angle bracket formulae which have degree
$2$ but which give finite type invariants of degree $1$.
We first define some nanophrases to be used in the examples.
Let $i$ and $j$ be integers such that $i$ is less than $j$ and let $a$
and $b$ be (possibly equal) elements in $\alpha$.
Let $e_{i,j,a,b}$ be the nanophrase where the $i$th component has the form
 $AB$, the $j$ has form $AB$, all other components are empty,
 $\xType{A}$ is $a$ and $\xType{B}$ is $b$.
Let $f_{i,j,a,b}$ be the nanophrase where the $i$th component has the form
 $AB$, the $j$ has form $BA$, all other components are empty,
 $\xType{A}$ is $a$ and $\xType{B}$ is $b$.
\begin{ex}\label{ex:angle-ft-degree-a}
Suppose $a$ is not equal to $\tau(a)$ and write $b$ for $\tau(a)$.
We define $l^\prime_{ija}$ by
\begin{equation*}
\begin{split}
l^\prime_{ija}(p) = \langle 
2 e_{i,j,a,a} + 2 f_{i,j,a,a}
- 2 e_{i,j,a,b} - 2 f_{i,j,a,b} 
- 2 e_{i,j,b,a} - 2 f_{i,j,b,a} \\
+ 2 e_{i,j,b,b} + 2 f_{i,j,b,b}
+ g_{i,j,a} + g_{i,j,b}
, p \rangle.
\end{split}
\end{equation*}
For any nanophrase $p$ we claim that $l^\prime_{ija}(p)$ is equal to
 $(l_{ija}(p))^2$ and so $l^\prime_{ija}$ gives a finite type invariant
 of degree $1$.
\par
To prove the claim, we fix a nanophrase $p$ and then let $k$ equal
 $\langle g_{i,j,a},p \rangle$ and let $l$ equal 
$\langle g_{i,j,b},p \rangle$.
Then, by definition, $l_{ija}(p)$ is equal to $k - l$.
One can easily check that the following identities hold:
\begin{equation*}
\langle e_{i,j,a,a} + f_{i,j,a,a}, p \rangle = \frac{1}{2}k(k-1),
\end{equation*}
\begin{equation*}
\langle e_{i,j,a,b} + f_{i,j,a,b} + e_{i,j,b,a} + f_{i,j,b,a}, p \rangle = kl
\end{equation*}
and
\begin{equation*}
\langle e_{i,j,b,b} + f_{i,j,b,b}, p \rangle = \frac{1}{2}l(l-1).
\end{equation*}
As an example we check the first equation. Let $\mathcal{A}_{ij}$ be the
 set of letters which appear both in the $i$th and $j$th components and
 project to $a$.
Note that $\mathcal{A}_{ij}$ contains $k$ letters.
The subphrases of $p$ which contribute to 
$\langle e_{i,j,a,a} + f_{i,j,a,a}, p \rangle$
are exactly those which contain two letters in $\mathcal{A}_{ij}$.
There are $\frac{1}{2}k(k-1)$ different ways to pick an unordered
 pair of letters in $\mathcal{A}_{ij}$ and each such pair contributes
 exactly $1$ to $\langle e_{i,j,a,a} + f_{i,j,a,a}, p \rangle$.
The other equations can be checked in a simlar way.
\par
Since $l^\prime_{ija}$ is a linear combination of the angle
 bracket formulae on the left hand side of the above equations,
 $l^\prime_{ija}$ can be written in terms of $k$ and $l$.
In fact, 
\begin{equation*}
l^\prime_{ija}(p) = (k-l)^2 = (l_{ija}(p))^2.
\end{equation*} 
\end{ex}
\begin{ex}\label{ex:angle-ft-degree-b}
Suppose $a$ is equal to $\tau(a)$.
For a nanophrase $p$, let $k$ equal $\langle g_{i,j,a},p \rangle$.
Then $k$ is the number of letters which appear both in the $i$th and
 $j$th components of $p$ and which project to $a$.
Recall that because $a$ equals $\tau(a)$,
\begin{equation*}
l_{ija}(p) = \langle g_{i,j,a},p \rangle \mod 2
\end{equation*}
 by definition.
Thus $l_{ija}(p)$ is equal to $1$ if $k$ is odd and equal to $0$ if
 $k$ is even.
\par
Consider the angle bracket formula
\begin{equation*}
l^{\prime\prime}_{ija}(p) = \langle 2 e_{i,j,a,a} + 2 f_{i,j,a,a} +
 g_{i,j,a}, p \rangle \mod 4
\end{equation*}
which takes values in $\cyclic{4}$.
We claim that this gives a finite type invariant of degree $1$.
By similar analysis to that in Example~\ref{ex:angle-ft-degree-b} one
 can check that 
\begin{equation*}
\langle 2 e_{i,j,a,a} + 2 f_{i,j,a,a}, p \rangle = k(k-1).
\end{equation*}
Thus
\begin{equation*}
l^{\prime\prime}_{ija}(p) = k^2 \mod 4.
\end{equation*} 
If $k$ is odd, $l^{\prime\prime}_{ija}(p)$ equals $1$ and if $k$ is
 even, $l^{\prime\prime}_{ija}(p)$ equals $0$.
If we consider $l_{ija}(p)$ and $l^{\prime\prime}_{ija}(p)$ as maps from
 the set of $r$-component nanophrases to the set $\{0,1\}$ then we have
 shown that
\begin{equation}\label{eqn:lprimeprime}
l_{ija}(p) = l^{\prime\prime}_{ija}(p)
\end{equation}
for all $r$-component nanophrases $p$.
\par
As a final remark, we note that $l^{\prime\prime}_{ija}$ is surjective
 on $\cyclic{4}$ and so Equation~\eqref{eqn:lprimeprime} does not hold
 over all of $\Z P(\alpha,\tau,S,r)$.
For example, consider the element $m g_{i,j,a}$ in 
$\Z P(\alpha,\tau,S,r)$ for any integer $m$.
Then
\begin{equation*}
l^{\prime\prime}_{ija}(m g_{i,j,a}) = m \mod 4
\end{equation*}
but
\begin{equation*}
l_{ija}(m g_{i,j,a}) = m \mod 2.
\end{equation*}
\end{ex}
\section{Finite type invariants of degree 2}
We now consider finite type invariants of degree 2.
We start by defining some simple examples of such invariants.
\par
Let $p$ be an $n$-component nanophrase.
Let $\alpha_o$ be an orientation of $\alpha$.
Let $i$ and $j$ be integers between $1$ and $n$ inclusive.
Let $a$ and $b$ be elements of $\alpha_o$.
Let $p_{i,i,a,b}$ be the nanophrase where the only non-empty component
is the $i$th component, which is $ABAB$.
If $i$ is not equal to $j$, let $p_{i,j,a,b}$ be the nanophrase where
the only non-empty components are the $i$th component, which is $ABA$,
and the $j$th component, which is $B$.
In either case, $\xType{A}$ is $a$ and $\xType{B}$ is $b$.
Let $u_{i,j,a,b}$ be the map from nanophrases to either $\Z$ or
$\cyclic{2}$ defined as follows
\begin{equation*}
u_{i,j,a,b}(p) =
\begin{cases}
\langle p_{i,j,a,b} - p_{i,j,\tau(a),b} - p_{i,j,a,\tau(b)} +
 p_{i,j,\tau(a),\tau(b)}, p \rangle
& \text{if } a \neq \tau(a), b \neq \tau(b) \\
\langle p_{i,j,a,b} - p_{i,j,\tau(a),b}, p \rangle \mod 2
& \text{if } a \neq \tau(a), b = \tau(b) \\
\langle p_{i,j,a,b} - p_{i,j,a,\tau(b)}, p \rangle \mod 2
& \text{if } a = \tau(a), b \neq \tau(b) \\
\langle p_{i,j,a,b}, p \rangle \mod 2
& \text{if } a = \tau(a), b = \tau(b).
\end{cases}
\end{equation*}
\begin{prop}
Let $(\alpha,\tau,S)$ be homotopy data where $S$ is diagonal.
If $i$ is not equal to $j$ or $a$ is not equal to $b$, $u_{i,j,a,b}$ is
 a finite type invariant of degree $2$.
\end{prop}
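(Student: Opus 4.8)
The plan is to establish homotopy invariance first, after which finiteness of type is almost free. Every nanophrase $p_{i,j,s,t}$ occurring in the definition of $u_{i,j,a,b}$ has rank $2$, so $u_{i,j,a,b}$ is given by an angle bracket formula of degree $2$. Hence, once invariance is shown, Proposition~\ref{prop:angleisft} gives immediately that $u_{i,j,a,b}$ is a finite type invariant of degree at most $2$ (the reduction modulo $2$ in three of the four cases does not affect this, since the underlying integer angle brackets already vanish on nanophrases with more than two semi-letters by Lemma~\ref{lem:singular-plusn}). To see that the degree is exactly $2$, I would evaluate on the nanophrase $p_{i,j,a,b}$ with both of its letters marked as semi-letters: the proper subphrases occurring in the semi-letter expansion have rank at most $1$ and so contribute nothing to a rank-$2$ angle bracket, while the full term contributes $\langle u_{i,j,a,b},p_{i,j,a,b}\rangle$, in which only the self-matching $\langle p_{i,j,a,b},p_{i,j,a,b}\rangle=1$ survives, the other three terms of $u_{i,j,a,b}$ having letter types that cannot match. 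This yields the value $1$ (or $1\bmod 2$), so $u_{i,j,a,b}$ is not of degree $1$.

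For invariance I would work through Proposition~\ref{prop:thetar-isomorphism}: writing $u_{i,j,a,b}(p)=\ell(\theta_r(p))$ for the evident coefficient functional $\ell$, the formula descends to a homotopy invariant exactly when $\ell$ annihilates the three types of relation generating $G(\alpha,\tau,S,r)$. The key elementary observation is that every term $p_{i,j,s,t}$ of $u_{i,j,a,b}$ is \emph{interleaved}, i.e. has underlying Gauss word $ABAB$. A relation of the first type is a single term $xAAy$, which contains an adjacent repeated letter and so is isomorphic to no $p_{i,j,s,t}$; thus $\ell$ kills it. The rank-$2$ part of a second-type relation is $xAByBAz$, whose Gauss word is the \emph{nested} word $ABBA$, again matching no term of $u_{i,j,a,b}$, and its other terms have rank $1$. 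Hence $\ell$ vanishes on relations of types one and two with no hypotheses needed.

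The real work, and the step I expect to be the main obstacle, is the third type of relation, where diagonality of $S$ forces the three letters to share a single type $c$. Its six rank-$2$ terms are precisely the three pairwise deletions of the triple, taken before and after the move; I would write these $s_{AB}^{\mathrm{bef}},s_{AC}^{\mathrm{bef}},s_{BC}^{\mathrm{bef}}$ and $s_{AB}^{\mathrm{aft}},s_{AC}^{\mathrm{aft}},s_{BC}^{\mathrm{aft}}$. The move keeps each of $A,B,C$ in the same pair of components and merely reshuffles the local order inside each two-letter block, so it changes the linking of the pairs as $AB\colon$ linked$\to$unlinked, $AC\colon$ unlinked$\to$linked, $BC\colon$ linked$\to$unlinked, while leaving unchanged any subphrase using at most one of $A,B,C$. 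Since $\ell$ sees only interleaved subphrases, $\ell$ of the whole relation collapses to $\ell(s_{AB}^{\mathrm{bef}})+\ell(s_{BC}^{\mathrm{bef}})-\ell(s_{AC}^{\mathrm{aft}})$. Because the three blocks occur in non-decreasing component order $m_1\le m_2\le m_3$, I would split into the cases $m_1<m_2<m_3$, $m_1=m_2<m_3$, $m_1<m_2=m_3$ and $m_1=m_2=m_3$; note that $m_1=m_3\neq m_2$ cannot occur. In the first three cases each surviving interleaved subphrase either has both letters straddling two distinct components (so matches no $p_{i,j,s,t}$) or is genuinely isomorphic to exactly one of the others, so the three terms cancel identically.

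The only delicate case is $m_1=m_2=m_3=m$, where all three surviving subphrases are isomorphic to $p_{m,m,c,c}$ and the expression reduces to a single multiple of $\ell(p_{m,m,c,c})$. This vanishes exactly under the stated hypothesis. Indeed $p_{m,m,c,c}$ can be isomorphic to a term $p_{i,j,s,t}$ of $u_{i,j,a,b}$ only if $i=j=m$ and $s=t=c$, which requires $c\in\{a,\tau(a)\}\cap\{b,\tau(b)\}$; if $i\neq j$ this never happens, and if $i=j$ then the hypothesis forces $a\neq b$, whence $a$ and $b$ lie in distinct $\tau$-orbits (both being elements of the orientation $\alpha_o$) and $\{a,\tau(a)\}\cap\{b,\tau(b)\}=\varnothing$. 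Thus $\ell(p_{m,m,c,c})=0$, the third relation is annihilated, and invariance follows. I would also record, to justify the hypothesis, that in the excluded case $i=j$, $a=b$ the same expression equals $\pm1$ rather than $0$, so $u_{i,i,a,a}$ genuinely fails to be invariant under the diagonal third move.
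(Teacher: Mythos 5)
Your overall framework is sound: factoring $u_{i,j,a,b}=\ell\circ\theta_r$ and checking that the coefficient functional $\ell$ annihilates the relations defining $G(\alpha,\tau,S,r)$ is a legitimate reformulation of invariance, your analysis of the first- and third-type relations is correct (and matches the paper's four-case H3 analysis, including the role of the orientation $\alpha_o$ in the excluded case $i=j$, $a=b$), and your lower bound on the degree, evaluating on $p_{i,j,a,b}$ with both letters dotted, is a valid and more direct argument than the paper's appeal to Theorem~\ref{thm:deg1-lm}. However, there is a genuine gap in your treatment of the second-type relations. You claim they are killed for trivial pattern reasons because ``the rank-$2$ part of a second-type relation is $xAByBAz$'' and ``its other terms have rank $1$.'' That is only true when $xyz$ contains no letters. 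The relations $xAByBAz+xAyAz+xByBz=0$ are imposed for \emph{arbitrary} $x,y,z$; when $xyz$ contains exactly one letter $E$, the terms $xAyAz$ and $xByBz$ have rank $2$ and can perfectly well be isomorphic to terms of $u_{i,j,a,b}$. (The analogous rank cutoff genuinely does dispose of all higher-rank cases for type-$3$ relations, where every term has rank at least $3$ once $xyzt$ is nonempty, but for type $2$ both $\rank(xyz)=0$ and $\rank(xyz)=1$ contribute.)

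This missing case is not a technicality: it is precisely where the four-term, $\tau$-flipped structure of $u_{i,j,a,b}$ (or the mod $2$ reduction) gets used, i.e.\ it is the actual content of H2-invariance, which is the bulk of the paper's proof. Since $A$ and $B$ sit adjacently in $xAByBAz$, the phrases $xAyAz$ and $xByBz$ differ only by replacing the letter $A$ of type $s$ by $B$ of type $\tau(s)$ in the same positions; so if one of them matches $p_{i,j,s',t'}$ then the other matches $p_{i,j,\tau(s'),t'}$ or $p_{i,j,s',\tau(t')}$, and these pairs of terms carry cancelling coefficients ($+1$ against $-1$ in the $\Z$-valued case, equal coefficients summing to $0$ modulo $2$ in the other cases). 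Your argument as written never invokes the companion terms $p_{i,j,\tau(a),b}$, $p_{i,j,a,\tau(b)}$, $p_{i,j,\tau(a),\tau(b)}$ for types $1$ and $2$ at all, and would therefore equally ``prove'' that the single bracket $\langle p_{1,2,a,b},\cdot\rangle$ is a homotopy invariant. It is not: take $p=CDEDC|E$ with $\xType{C}=a$, $\xType{D}=\tau(a)\neq a$, $\xType{E}=b$; an H2 move removes $C$ and $D$, yet
\begin{equation*}
\langle p_{1,2,a,b},\,CDEDC|E\rangle = 1 \neq 0 = \langle p_{1,2,a,b},\,E|E\rangle,
\end{equation*}
and only the companion subphrase $DED|E$, matching $p_{1,2,\tau(a),b}$ with opposite sign, restores invariance. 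Once you insert the $\rank(xyz)=1$ case with this cancellation argument, the remainder of your proof goes through.
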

\begin{proof}
We first prove invariance.
It is sufficient to prove that for two nanophrases $p$ and $p^\prime$
 related by an isomorphism or a homotopy move, $u_{i,j,a,b}(p)$
 is equal to $u_{i,j,a,b}(p^\prime)$.
It is easy to see that this holds for an isomorphism or an H1
 move.
\par
Consider the case where $p$ and $p^\prime$ are related by an H2
 move.
Then, without loss of generality, we may assume $p$ has the form
 $xCDyDCz$, where $\xType{C}$ is equal to $\tau(\xType{D})$ and
 $p^\prime$ has the form $xyz$.
First note that the subphrase of $p$ just consisting of $C$ and $D$ does
 not match any $p_{i,j,a,b}$.
Let $E$ be some letter in $p$ other than $C$ or $D$.
If the subphrase of $p$ just consisting of $C$ and $E$ matches
 $p_{i,j,a,b}$ for some $i$, $j$, $a$ and $b$, then the subphrase of
 $p$ just consisting of $D$ and $E$ either matches
 $p_{i,j,\tau(a),b}$ or $p_{i,j,a,\tau(b)}$.
In either case, the contribution of the two subphrases to
 $u_{i,j,a,b}(p)$ is $0$.
Any subphrase of $p$ which matches $p_{i,j,a,b}$ for some $i$, $j$, $a$
 and $b$ and does not contain $C$ or $D$ also appears in $p^\prime$.
Thus $u_{i,j,a,b}(p)$ equals $u_{i,j,a,b}(p^\prime)$.
\par
Now consider the case where $p$ and $p^\prime$ are related by an
 H3 move.
Without loss of generality, we can assume that $p$ has the form
 $xCDyCEzDEt$, where $\xType{C}$, $\xType{D}$ and $\xType{E}$ are all
 equal.
First note that any subphrase of $p$ which matches $p_{i,j,a,b}$ for
 some $i$, $j$, $a$ and $b$ and contain less than two of $C$, $D$ or $E$
 also appears in $p^\prime$.
Now consider rank $2$ subphrases of $p$ which contain exactly two of
 $C$, $D$ or  $E$.
There are three such subphrases which we label $p_{CD}$, $p_{CE}$ and
 $p_{DE}$, where the subscript shows which letters appear in the
 subphrase.
In the same way we define $p^\prime_{CD}$, $p^\prime_{CE}$ and
 $p^\prime_{DE}$ as subphrases of $p^\prime$ which has the form
 $xDCyECzEDt$.
We now check how these subphrases contribute to $u_{i,j,a,b}$.
Depending on whether the parts of the H3 move $CD$, $CE$ and $DE$ appear
 in the same or different components, we have four different cases to
 check.
\par
Firstly, there is the case that the three parts of the H3 move, $CD$,
 $CE$ and $DE$, all appear in the $i$th component for some $i$.
Then $p_{CD}$ and $p_{DE}$ both match $p_{i,i,a,a}$ but $p_{CE}$ does
 not match anything.
So these subphrases contribute to $u_{i,i,a,a}(p)$ for which we do
 not claim invariance.
Indeed, $p^\prime_{CE}$ matches $p_{i,i,a,a}$ but $p^\prime_{CD}$ and
 $p^\prime_{DE}$ do not match anything.
Therefore $u_{i,i,a,a}(p)$ is not invariant.
\par
Secondly, there is the case where the first two parts of the H3 move,
 $CD$ and $CE$, are in the $i$th component and the last part, $DE$ is in
 the $j$th component, for some $i$ and $j$ with $i$ less than $j$.
In this case $p_{CD}$ matches $p_{i,j,a,a}$ but $p_{CE}$ and $p_{DE}$ do
 not match anything.
On the other hand, $p^\prime_{CE}$ matches $p_{i,j,a,a}$ but
 $p^\prime_{CD}$ and $p^\prime_{DE}$ do not match anything.
Thus $u_{i,j,a,a}(p)$ equals $u_{i,j,a,a}(p^\prime)$.
\par
Thirdly, there is the case where the first part of the H3 move, $CD$ is
 in the $i$th component and the other two parts, $CE$ and $DE$ are in
 the $j$th component, for some $i$ and $j$ with $i$ less than $j$.
In this case $p_{DE}$ matches $p_{j,i,a,a}$ but $p_{CD}$ and $p_{CE}$ do
 not match anything.
On the other hand, $p^\prime_{CE}$ matches $p_{j,i,a,a}$ but
 $p^\prime_{CD}$ and $p^\prime_{DE}$ do not match anything.
Thus $u_{j,i,a,a}(p)$ equals $u_{j,i,a,a}(p^\prime)$.
\par
Fourthly, there is the case where each part of the H3 move is in a
 different component.
However, in this case, none of $p_{CD}$, $p_{CE}$, $p_{DE}$,
 $p^\prime_{CD}$, $p^\prime_{CE}$ or $p^\prime_{DE}$ match any
 $p_{i,j,a,b}$.  
\par
Therefore $u_{i,j,a,b}$ is invariant under homotopy unless $i$ equals
 $j$ and $a$ equals $b$.
\par
We now show that the $u_{i,j,a,b}$ are finite type invariants of degree
 $2$.
By Proposition~\ref{prop:angleisft} each $u_{i,j,a,b}$ is a finite type
 invariant of degree less than or equal to $2$.
Since $u_{i,j,a,b}(\trivialp{n})$ equals $0$ but
 $u_{i,j,a,b}(p_{i,j,a,b})$ equals $1$, $u_{i,j,a,b}$ is non-trivial and 
 so has degree greater than $0$.
\par
Consider the nanophrase $q$ given by $B|B$ where $\xType{B}$ is $b$.
Then the linking matrices of $p_{1,2,a,b}$ and $q$ are identical.
On the  other hand, $u_{1,2,a,b}(p_{1,2,a,b})$ is equal to $1$ and
 $u_{1,2,a,b}(q)$ is equal to $0$.
Thus by, Theorem~\ref{thm:deg1-lm}, $u_{i,j,a,b}$ is not a finite type
 invariant of degree $1$.
So we can conclude that $u_{i,j,a,b}$ is a finite type invariant of
 degree $2$.
\end{proof}
We will show that Fukunaga's $T$ invariant \cite{Fukunaga:nanophrases2}
is a finite type invariant of degree $2$.
Here we give a slightly different definition to that appearing in
\cite{Fukunaga:nanophrases2}.
However, it is easy to check that the two definitions are equivalent.
\par
Let $p$ be an $n$-component nanophrase and let $\mathcal{A}(p)$ be the
set of letters appearing in $p$.
Define a map $n_p$ from $\mathcal{A}(p) \times \mathcal{A}(p)$ to
$\{-1,0,1\}$ as follows.
Set $n_p(X,X)$ to be $0$ for all letters $X$.
For distinct letters $X$ and $Y$ set
\begin{equation*}
n_p(X,Y) =
\begin{cases}
1  & \text{ if $X$ and $Y$ appear alternating in $p$, starting with $X$} \\
-1 & \text{ if $X$ and $Y$ appear alternating in $p$, starting with $Y$} \\
0  & \text{ if $X$ and $Y$ do not appear alternating}.
\end{cases}
\end{equation*}
Next, for any element $a$ in $\alpha$, define a map $\varepsilon_a$ from
$\mathcal{A}(p)$ to $\{-1,0,1\}$ by
\begin{equation*}
\varepsilon_a(X) =
\begin{cases}
1  & \text{ if } \xType{X} = a \\
-1 & \text{ if } \xType{X} = \tau(a) \neq a \\
0  & \text{ otherwise}.
\end{cases}
\end{equation*}
Then for two elements $a$ and $b$ of $\alpha$ and a letter in
$\mathcal{A}(p)$, define $t_p(a,b,X)$ by
\begin{equation*}
t_p(a,b,X) = \sum_{Y \in \mathcal{A}} 
\varepsilon_a(X)\varepsilon_b(Y) n_p(X,Y).
\end{equation*}
Let $\mathcal{A}_i(p)$ be the set of letters in $\mathcal{A}(p)$ for
which both occurences of the letter appear in the $i$th component of
$p$.
For the $i$th component of $p$, $T^i_{a,b}(p)$ is defined by
\begin{equation*}
T^i_{a,b}(p) =
\begin{cases}
\sum_{A \in \mathcal{A}_i(p)} t_p(a,b,A) \in \Z &
 \text{ if } a \neq \tau(a) \text{ and } b \neq \tau(b) \\
\sum_{A \in \mathcal{A}_i(p)} t_p(a,b,A) \mod 2 \in \cyclic{2} &
 \text{ otherwise}.
\end{cases}
\end{equation*}
Fukunaga proved that when $S$ is diagonal, $T^i_{a,b}(p)$ is a homotopy
invariant \cite{Fukunaga:nanophrases2}.
\par
Let $\alpha_o$ be an orientation of $\alpha$.
Let $T^i(p)$ denote the tuple of elements $T^i_{a,b}(p)$ for all $a$ and
$b$ in $\alpha_o$.
Note that for any $a$ and $b$ in $\alpha$, we can calculate
$T^i_{a,b}(p)$ from $T^i(p)$ because we have the following
relations
\begin{equation*}
T^i_{a,b}(p) = - T^i_{\tau(a),b}(p) = - T^i_{a,\tau(b)}(p) =
 T^i_{\tau(a),\tau(b)}(p),
\end{equation*}
which can be derived from the definition.
Fukunaga's $T$ invariant is the $n$-tuple consisting of the $T^i(p)$.
\begin{thm}\label{thm:t-degree2}
The $T$ invariant is a degree $2$ finite type invariant.
\end{thm}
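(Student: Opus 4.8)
The plan is to exhibit each component $T^i_{a,b}$ as an angle bracket formula of degree $2$ and then read off the degree from Proposition~\ref{prop:angleisft} and Theorem~\ref{thm:deg1-lm}. Homotopy invariance is already available: Fukunaga proved that $T^i_{a,b}$ is a homotopy invariant when $S$ is diagonal \cite{Fukunaga:nanophrases2}, so it remains only to locate $T^i_{a,b}$ among the finite type invariants.

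First I would rewrite the double sum defining $T^i_{a,b}(p)$ as a sum over rank $2$ subphrases. A term $\varepsilon_a(A)\varepsilon_b(Y)n_p(A,Y)$ is nonzero only when both occurrences of $A$ lie in the $i$th component, $\xType{A}$ lies in $\{a,\tau(a)\}$, $\xType{Y}$ lies in $\{b,\tau(b)\}$, and $A$ and $Y$ alternate. Since the two copies of $A$ are confined to component $i$, alternation forces one copy of $Y$ to lie between them, while the other copy of $Y$ lies either again in component $i$ or in some component $j \neq i$. Hence the subphrase on $\{A,Y\}$ is isomorphic either to $p_{i,i,\xType{A},\xType{Y}}$ or to $p_{i,j,\xType{A},\xType{Y}}$ for some $j$, and the weight $\varepsilon_a(A)\varepsilon_b(Y)n_p(A,Y)$ depends only on this isomorphism type. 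Grouping terms accordingly produces an explicit element $u^i_{a,b}$ of $\Z P(\alpha,\tau,S,n)$, a signed combination of the rank $2$ nanophrases $p_{i,i,c,d}$ and $p_{i,j,c,d}$ (with $c \in \{a,\tau(a)\}$ and $d \in \{b,\tau(b)\}$), for which $T^i_{a,b}(p) = \langle u^i_{a,b},p \rangle$, taken modulo $2$ in the cases where $a = \tau(a)$ or $b = \tau(b)$. As $u^i_{a,b}$ has rank $2$, Lemma~\ref{lem:singular-plusn} and Proposition~\ref{prop:angleisft} immediately give that $T^i_{a,b}$, and hence the whole tuple $T$, is a finite type invariant of degree at most $2$; a direct resolution of three semi-letters, mirroring the proof of Theorem~\ref{thm:linkingmatrix-deg1}, would give the same bound.

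The main obstacle is the sign bookkeeping in this grouping. Three signs must be reconciled: $\varepsilon_a(A)$ and $\varepsilon_b(Y)$, which record whether the letters project to $a,b$ or to $\tau(a),\tau(b)$, and $n_p(A,Y)$, which records which letter comes first in the alternation. For the straddling type $p_{i,j,c,d}$ this last sign is constant on the isomorphism type (it is determined by whether $i<j$ or $i>j$), so it can simply be absorbed. For the planar type $p_{i,i,c,d}$ one must be careful: the two letters of an $ABAB$ subphrase contribute through both outer summands $A$ and $Y$, and when $a$ and $b$ agree the contributions $n_p(A,Y)$ and $n_p(Y,A)$ cancel. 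This cancellation is exactly what removes the non-invariant combination $u_{i,i,a,a}$ from the formula, in harmony with the exclusion $i \neq j$ or $a \neq b$ noted for the $u_{i,j,a,b}$, and it is the delicate point to verify.

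Finally I would pin down the degree exactly. The invariant is nontrivial, so it is not of degree $0$: for elements $a \neq b$ of $\alpha_o$ one has $T^i_{a,b}(p_{i,i,a,b}) = 1$ while $T^i_{a,b}(\trivialp{n}) = 0$. To rule out degree $1$ I would produce two nanophrases with identical linking matrices but different $T$, so that Theorem~\ref{thm:deg1-lm} forbids $T$ from being a degree $1$ invariant. The nanoword $p_{i,i,a,b}$ (whose single non-empty component is $ABAB$) has vanishing linking matrix, as does the trivial nanophrase, yet $T$ separates them. Therefore $T$ is not of degree $1$, and combined with the bound from the angle bracket formula we conclude that $T$ is a finite type invariant of degree exactly $2$.
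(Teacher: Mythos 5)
Your proof has the same two-part architecture as the paper's, and the upper bound is in substance identical: the paper writes
\begin{equation*}
T^i_{a,b}(p) = u_{i,i,a,b}(p) - u_{i,i,b,a}(p)
 + \sum_{j=1}^{i-1} u_{j,i,b,a}(p)
 + \sum_{j=i+1}^{n} u_{i,j,a,b}(p),
\end{equation*}
which is exactly the degree $2$ angle bracket formula your grouping argument produces (including the cancellation that removes the non-invariant combination $u_{i,i,a,a}$), and then invokes the already-proved fact that the $u_{i,j,a,b}$ are finite type of degree at most $2$. Where you genuinely diverge is the lower bound. The paper rules out degree $1$ by citing the first author's result that $T$ and the linking matrix are independent for Gauss phrases, and then adapting or projecting to arbitrary diagonal $S$; you instead exhibit the explicit pair $p_{i,i,a,b}$ (the nanoword $ABAB:ab$) versus $\trivialp{n}$, which share the zero linking matrix but are separated by $T^i_{a,b}$, and then apply Theorem~\ref{thm:deg1-lm}. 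Your witness is correct ($T^i_{a,b}(p_{i,i,a,b})=1$ does hold for $a \neq b$ in $\alpha_o$) and has the advantage of being self-contained and checkable, where the paper leans on an external reference.

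However, your witness requires two distinct elements $a \neq b$ of $\alpha_o$, i.e.\ at least two $\tau$-orbits in $\alpha$. When $\alpha$ has a single orbit --- notably the Gauss phrase case $\alpha = \{a\}$, $\tau$ the identity, which is precisely the setting of the independence result the paper cites --- your pair does not exist, and the natural substitute fails: $T^i_{a,a}$ applied to $ABAB$ in one component gives $n_p(A,B) + n_p(B,A) = 0$, so that phrase is \emph{not} separated from the trivial one by $T$. A correct witness there is, for instance, $ABAC|BC$ with all letters labelled $a$: both $B$ and $C$ straddle the two components, so the linking matrix entry is $a + a = 0$ in $\pi$, while $\mathcal{A}_1 = \{A\}$ and $T^1_{a,a} = n_p(A,B) + n_p(A,C) = 1$; the variant with $\xType{C} = \tau(\xType{B})$ handles a single free orbit. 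So as written, your lower-bound argument leaves the one-orbit homotopies (including Gauss phrases with two or more components) unproven; patching it needs either this extra example or the paper's citation. (For completeness, note that for one-orbit \emph{nanowords} $T$ is identically zero, so there the theorem is vacuous --- a looseness your argument shares with the paper's.)
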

\begin{proof}
Each $T^i_{a,b}(p)$ can be written as a linear combination of
the invariants $u_{i,j,a,b}$ or $u_{j,i,b,a}$:
\begin{equation*}
T^i_{a,b}(p) = u_{i,i,a,b}(p) - u_{i,i,b,a}(p)
 + \sum_{j=1}^{i-1} u_{j,i,b,a}(p) 
 + \sum_{j=i+1}^{n} u_{i,j,a,b}(p)
\end{equation*}
where the sum is calculated in $\Z$ or $\cyclic{2}$ appropriately.
Therefore $T^i_{a,b}(p)$ is a finite type invariant of degree less than
 or equal to 2.
\par
On the other hand, $T^i_{a,b}(p)$ is non-trivial and so has degree greater
 than $0$.
For homotopy of Gauss phrases (nanophrases over a set containing a
 single element, where $\tau$ is the identity map and $S$ is diagonal),
 the first author showed that Fukunaga's $T$ and the linking matrix are 
 independent \cite{Gibson:gauss-phrase}.
By a simple adaption of the argument, or just by considering the
 projection of nanophrases to Gauss phrases, it is clear that this fact
 holds for any nanophrase homotopy where $S$ is diagonal.
So $T^i_{a,b}(p)$ is not degree $1$ and therefore must be a degree $2$
 finite type invariant. 
\par
As $T$ is essentially a tuple of the $T^i_{a,b}(p)$, it then follows
 that $T$ is a degree $2$ finite type invariant.
\end{proof}
\begin{thm}\label{thm:deg2-tindep}
There exist degree $2$ non-trivial finite type invariants which
 are independent of $T$.
\end{thm}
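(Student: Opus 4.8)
The plan is to exhibit a single explicit degree~$2$ invariant together with a pair of nanophrases that it separates but which Fukunaga's $T$ cannot. The guiding observation is the formula for $T^i_{a,b}$ established in the proof of Theorem~\ref{thm:t-degree2}: the off-diagonal contributions enter only through the sums $\sum_{j>i} u_{i,j,a,b}$ and $\sum_{j<i} u_{j,i,b,a}$. Thus $T$ records, for each fixed component $i$, only the total linking of component $i$ with all the other components, and cannot detect with which particular component that linking occurs. For $r \geq 3$ components this redundancy should leave room for a degree~$2$ invariant that $T$ misses.

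Concretely, I would fix a diagonal $S$, work with $3$-component nanophrases, and take the invariant $u_{1,2,a,b}$, which is a genuine degree~$2$ finite type invariant by the proposition preceding Theorem~\ref{thm:t-degree2} since $1 \neq 2$. As witnesses I would use the nanophrases $p_{1,2,a,b}$ and $p_{1,3,a,b}$ defined there, with $\xType{A} = a$ and $\xType{B} = b$. Straight from the subphrase definition of $u_{1,2,a,b}$ one gets $u_{1,2,a,b}(p_{1,2,a,b}) = 1$ and $u_{1,2,a,b}(p_{1,3,a,b}) = 0$, the point being that the unique rank~$2$ subphrase of $p_{1,3,a,b}$ places the second occurrence of its type-$b$ letter in component $3$ rather than component $2$.

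The crucial step is to check that $T(p_{1,2,a,b}) = T(p_{1,3,a,b})$. I would argue this as follows. The concatenations of the two nanophrases are the identical Gauss word $ABAB$, so the alternation value $n_p(A,B)$ equals $1$ for each of the two nanophrases $p$; since $A$ is, in each case, the only letter whose two occurrences lie in a single component (namely component $1$), the sets $\mathcal{A}_1$ coincide and $T^1_{a,b}$ takes the value $1$ on both, while $\mathcal{A}_2$ and $\mathcal{A}_3$ are empty for both, giving $T^2 = T^3 = 0$. As the only letter types occurring are $a$ and $b$, every remaining entry $T^i_{a',b'}$ of the tuple vanishes for both nanophrases. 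Hence $T(p_{1,2,a,b}) = T(p_{1,3,a,b})$, and no homomorphism $f$ can satisfy $u_{1,2,a,b} = f \circ T$, since such an $f$ would force $u_{1,2,a,b}(p_{1,2,a,b}) = u_{1,2,a,b}(p_{1,3,a,b})$. Thus $u_{1,2,a,b}$ is a non-trivial degree~$2$ finite type invariant independent of $T$.

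The main obstacle is bookkeeping rather than conceptual: one must confirm that every entry of the $T$ tuple, over all components and all pairs from $\alpha_o$, really does agree on the two nanophrases, and one must handle the degenerate cases $a = \tau(a)$ and $b = \tau(b)$ — including the one-element alphabet with $\tau = \mathrm{id}$, where necessarily $a = b$ and the invariant is $u_{1,2,a,a}$ — by reducing the relevant $\Z$-valued counts modulo $2$. In every such case the computation is the verbatim analogue of the generic one, so the separation, and hence the independence from $T$, persists.
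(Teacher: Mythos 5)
Your proof is correct for the statement as written, and its skeleton is the same as the paper's: take the degree-$2$ invariant $u_{1,2,a,b}$ (legitimate since $1 \neq 2$) and exhibit a pair of $3$-component nanophrases on which $T$ agrees but $u_{1,2,a,b}$ does not. Your computations check out: $ABA|B|\trivial$ and $ABA|\trivial|B$ both have $\mathcal{A}_1 = \{A\}$ and the same alternation data (both concatenations are $ABAB$), so every entry of $T$ coincides, while $u_{1,2,a,b}$ takes the values $1$ and $0$ respectively, and the mod-$2$ degenerate cases go through verbatim as you say. The one substantive difference is the choice of witnesses, and it affects what is actually proved. The paper's pair is $ABAC|BC|\trivial$ and $ABAC|\trivial|BC$ with $\xType{A} = a$, $\xType{B} = b$ and $\xType{C} = \tau(b)$; the extra letter $C$ is inserted precisely so that the off-diagonal linking entries cancel ($b + \tau(b) = 0$ in $\pi$), hence the two phrases also share the same (zero) linking matrix, and by Theorem~\ref{thm:deg1-lm} no degree-$1$ finite type invariant can separate them. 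Your pair, by contrast, has $l_{12} = b \neq 0$ for the first phrase and $l_{12} = 0$ for the second, so the linking matrix — a degree-$1$ invariant — already distinguishes your witnesses. Consequently your argument establishes independence of $u_{1,2,a,b}$ from $T$ alone, which is all the theorem literally asserts, but not the sharper fact delivered by the paper's choice: that $u_{1,2,a,b}$ is not determined even by $T$ together with all degree-$1$ invariants, i.e.\ that the separation is a genuinely degree-$2$ phenomenon. If you replace your witnesses by the paper's (equivalently, append to each of your phrases a cancelling letter of type $\tau(b)$ alongside $B$), you recover this stronger conclusion at no extra cost.
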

\begin{proof}
Consider the nanophrases $ABAC|BC|\trivial$ and $ABAC|\trivial|BC$
 where, in both cases, $\xType{A}$ is $a$, $\xType{B}$ is $b$ and
 $\xType{C}$ is $\tau(b)$ for some $a$ and $b$ in $\alpha_o$.
Then the invariant $u_{1,2,a,b}$ can distinguish the two nanophrases but
 the $T$ invariant cannot.
Note that both nanophrases have the same linking matrix, so they cannot
 be distinguished by degree $1$ finite type invariants.  
\end{proof}
\begin{thm}\label{thm:deg2-nanoword}
Given $\alpha$ and $\tau$, let $l$ be the number of free orbits of
 $\alpha$ under $\tau$ and let $k$ be the number of fixed orbits.
When $S$ is empty, the group $G_2(\alpha,\tau,S,1)$ is isomorphic to
\begin{equation}\label{eqn:empty-s}
(\Z)^{l^2+1} \oplus (\cyclic{2})^{k^2+2kl}.
\end{equation}
When $S$ is diagonal, the group $G_2(\alpha,\tau,S,1)$ is isomorphic to
\begin{equation}\label{eqn:diagonal-s}
(\Z)^{l^2-l+1} \oplus (\cyclic{2})^{k^2+2kl-k}.
\end{equation}
When $S$ is $\alpha \times \alpha \times \alpha$, $G_2(\alpha,\tau,S,1)$
 is isomorphic to 
\begin{equation}\label{eqn:full-s}
\Z \oplus (\cyclic{2})^{k+l-1}.
\end{equation}
\end{thm}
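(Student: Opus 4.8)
The plan is to use Proposition~\ref{prop:zh-decomp} to write $G_2(\alpha,\tau,S,1)\cong\Z\oplus H_2(\alpha,\tau,S,1)$ and to compute $H_2$ from an explicit presentation. Since $H_2$ is generated by the nontrivial nanowords of rank at most $2$, I would first list these generators. A rank $1$ nanoword has the form $AA$ and is killed by a relation of the first type. A rank $2$ nanoword is, up to isomorphism, one of $AABB$, $ABBA$ or $ABAB$ together with a choice of types; the first two contain an adjacent repeated letter and so are again zero by a relation of the first type. Hence $H_2$ is generated by the alternating nanowords, which up to isomorphism are indexed by ordered pairs $(a,b)\in\alpha\times\alpha$ (the type of the chord that opens first and the type of the chord that opens second). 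I will write $\langle a,b\rangle$ for the class of $ABAB{:}ab$.

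Next I would read off the relations. Because every nanoword of rank greater than $2$ is zero, a relation of the second type contributes to $H_2$ only when its connecting words carry exactly one (doubled) auxiliary letter $C$; placing that letter on either side of the pattern and discarding the rank $3$ and the degenerate terms leaves
\begin{equation*}
\langle c,a\rangle+\langle c,\tau(a)\rangle=0,\qquad \langle a,c\rangle+\langle\tau(a),c\rangle=0,
\end{equation*}
so that $\langle a,b\rangle=-\langle\tau(a),b\rangle=-\langle a,\tau(b)\rangle$. Similarly a relation of the third type contributes only when its connecting words are trivial; deleting the rank $3$ terms and the terms with an adjacent repeat leaves
\begin{equation*}
\langle a,b\rangle+\langle b,c\rangle=\langle a,c\rangle\qquad\text{for }(a,b,c)\in S.
\end{equation*}
These relations, together with the first-type relations already used and the vanishing of higher rank, are all the relations, so they present $H_2$.

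The three cases are then linear algebra over this presentation. Fix an orientation $\alpha_o$ and recall $\lvert\alpha\rvert=2l+k$. When $S$ is empty only the second-type relations survive: moving both indices into $\alpha_o$ shows $\langle a,b\rangle$ is free when both $a$ and $b$ lie in free orbits ($l^2$ generators) and is $2$-torsion as soon as one index lies in a fixed orbit ($k^2+2kl$ generators), which gives \eqref{eqn:empty-s}. When $S$ is diagonal the third-type relation reads $\langle a,a\rangle=0$ for every $a$; propagating this through the second-type relations kills the diagonal $\Z$-summand for each of the $l$ free orbits and the diagonal $\cyclic{2}$-summand for each of the $k$ fixed orbits, giving \eqref{eqn:diagonal-s}. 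When $S$ is full the relation $\langle a,b\rangle+\langle b,c\rangle=\langle a,c\rangle$ holds for all triples; setting $b=a$ gives $\langle a,a\rangle=0$ and setting $c$ equal to a basepoint $a_0$ gives $\langle a,b\rangle=f(a)-f(b)$ with $f(a):=\langle a,a_0\rangle$. Feeding this into the second-type relations forces $2f(a)=0$ and $f(\tau(a))=f(a)$, so $f$ descends to the $l+k$ orbits with $f(a_0)=0$, presenting $H_2\cong(\cyclic{2})^{l+k-1}$ and hence \eqref{eqn:full-s}.

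The main obstacle is the bookkeeping in the second and third steps: one must check carefully which terms of each homotopy relation survive after discarding rank $3$ nanowords and nanowords with an adjacent repeated letter, and in particular confirm that no further relations are produced by other placements of the auxiliary letters. Once the presentation is pinned down, the only remaining subtlety is verifying that the reductions are eliminations of generators rather than hidden collapses---most delicately in the full-$S$ case, where I must confirm that the change of generators to the orbit functions $f$ introduces exactly the relations $2f=0$ and nothing more, so that the surviving group is genuinely $(\cyclic{2})^{l+k-1}$ and not a proper quotient.
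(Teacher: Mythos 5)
Your proposal is correct, and its skeleton coincides with the paper's proof: both split $G_2(\alpha,\tau,S,1) \cong \Z \oplus H_2(\alpha,\tau,S,1)$ via Proposition~\ref{prop:zh-decomp}, identify the nontrivial generators of $H_2$ as the classes of $ABAB:ab$, extract from the second-type relations (with exactly one doubled auxiliary letter) the identities $ABAB:ab = -ABAB:\tau(a)b = -ABAB:a\tau(b)$, which is the paper's \eqref{eqn:trunc2}, and extract from the third-type relations with trivial connecting words the identity $ABAB:ab + ABAB:bc = ABAB:ac$ for $(a,b,c) \in S$, which is the paper's \eqref{eqn:third-simple}. The empty-$S$ and diagonal-$S$ cases are then handled exactly as in the paper, with the same counts.

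The genuine divergence is the full-$S$ case. The paper first proves $2\,ABAB:bc = 0$ by combining the third-type relation for $(a,b,c)$ with the one for $(\tau(a),b,c)$, derives the symmetry $ABAB:ab = ABAB:ba$, and then counts independent generators by induction on $m = k+l$, eliminating at each step all generators involving the last element of $\alpha_0$. Your basepoint substitution $\langle a,b\rangle = f(a)-f(b)$ with $f(a) := \langle a,a_0\rangle$ replaces that induction entirely: the substitution is a change of generators (a Tietze transformation), under which the third-type relations become vacuous and the second-type relations become the family $f(a)+f(\tau(a)) = 2f(b)$; together with $f(a_0)=0$ this family generates exactly $2f = 0$ and $f \circ \tau = f$ (take $a = a_0$, then $b = a_0$), so $H_2 \cong (\cyclic{2})^{k+l-1}$ falls out with no induction and no case analysis on coincidences among $a$, $b$, $c$. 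This also settles the ``hidden collapse'' worry you flag at the end: since relation (iii) is identically satisfied by $f(a)-f(b)$, the only relations imposed on the $f$'s are the ones just listed, so the quotient is precisely $(\cyclic{2})^{k+l-1}$. Your route is shorter and, in the enumeration of where the auxiliary doubled letter can sit in a second-type relation, slightly more explicit than the paper, which simply writes down the two surviving instances; the paper's induction, on the other hand, is the kind of argument that generalizes to presentations where no convenient potential function $f$ exists.
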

\begin{proof}
By Proposition~\ref{prop:zh-decomp}, $G_2(\alpha,\tau,S,1)$ has the form
\begin{equation*}
G_2(\alpha,\tau,S,1) \cong \Z \oplus H_2(\alpha,\tau,S,1).
\end{equation*}
We calculate $H_2(\alpha,\tau,S,1)$.
\par
By relations of the fourth type, if a nanoword $w$ has rank greater than
 $2$ then $w$ equals $0$ in $H_2(\alpha,\tau,S,1)$.
By relations of the first type, if $w$ is isomorphic to a word of the
 form $xAAy$ then $w$ equals $0$ in $H_2(\alpha,\tau,S,1)$.
Thus nanowords of these types can be eliminated from the presentation
 of $H_2(\alpha,\tau,S,1)$.
We say that a nanoword $w$ is a non-trivial generator of
 $H_2(\alpha,\tau,S,1)$ if its rank is greater than $0$ and less than or
 equal to $2$ and $w$ is not isomorphic to a nanoword of the form
 $xAAy$.
So non-trivial generators of $H_2(\alpha,\tau,S,1)$ have the form
 $ABAB:ab$ for $a$ and $b$ (possibly equal) elements in $\alpha$.
\par
For any $S$, by relations of the second type, we have
\begin{equation}
ABCBAC:a\tau(a)b + ACAC:ab + BCBC:\tau(a)b = 0
\end{equation}
and
\begin{equation}
ABCACB:ab\tau(b) + ABAB:ab + ACAC:a\tau(b) = 0
\end{equation}
for all elements $a$ and $b$ of $\alpha$.
From these relations, using isomorphisms and relations of the fourth
 type, we derive 
\begin{equation}\label{eqn:trunc2}
ABAB:ab = -ABAB:\tau(a)b = -ABAB:a\tau(b) = ABAB:\tau(a)\tau(b)  
\end{equation}
for all elements $a$ and $b$ of $\alpha$.
Note that if $\tau(a)$ is equal to $a$ or if $\tau(b)$ is equal to $b$,
 then \eqref{eqn:trunc2} implies
\begin{equation}\label{eqn:order2}
2ABAB:ab = 0.
\end{equation}
Let $\alpha_0$ be an orientation of $\alpha$.
Then all non-trivial generators can be written in terms of generators of
 the form $ABAB:ab$ where $a$ and $b$ are in $\alpha_0$.
Thus we can eliminate from the presentation all generators not of this
 form.
We have now considered all relations given by the first relation or the
 second relation.
\par
We now consider the case when $S$ is empty.
In this case, there are no more relations to consider.
By \eqref{eqn:trunc2} we know that generators of the form $ABAB:ab$ and
 $ABAB:cd$ are dependent if and only if $a$ is in the same orbit of
 $\tau$ as $c$ and $b$ is in the same orbit of $\tau$ as $d$.
Thus we have exactly $(k+l)^2$ independent generators.
\par
If $a$ or $b$ are in a fixed orbit of $\tau$, then by
 \eqref{eqn:order2}, the generator $ABAB:ab$ has order $2$.
A generator of this form generates a subgroup isomorphic to $\cyclic{2}$.
There are $k^2+2kl$ independent generators of this type.
\par
On the other hand, if $a$ and $b$ are both in free orbits of $\tau$,
 $ABAB:ab$ generates a subgroup isomorphic to $\Z$.
There are $l^2$ independent generators of this type.
\par
Thus, when $S$ is empty, $G_2(\alpha,\tau,S,1)$ is isomorphic to the
 group in \eqref{eqn:empty-s}.
\par
We now consider the case where $S$ is diagonal.
In this case we get exactly one relation of the third type for each $a$
 in $\alpha$:
\begin{multline*}
ABACBC:aaa + ABAB:aa + AACC:aa + BCBC:aa = \\
BACACB:aaa + BAAB:aa + ACAC:aa + BCCB:aa. 
\end{multline*}
Using isomorphisms and relations of the first and fourth types, this
 simplifies to 
\begin{equation*}
ABAB:aa = 0
\end{equation*}
for all $a$ in $\alpha$.
So we can eliminate generators of the form $ABAB:aa$ from the
 presentation and we are left with generators of the form $ABAB:ab$
 where $a$ and $b$ are in $\alpha_0$ and $a$ is not equal to $b$.
There are $k^2+2kl-k$ generators of this form where at least one of $a$
or $b$ is in a fixed orbit of $\tau$.
These generators have order $2$.
On the other hand, there are $l^2-l$ generators for which $a$ and $b$
are both in free orbits of $\tau$.
Thus, when $S$ is diagonal, $G_2(\alpha,\tau,S,1)$ is isomorphic to the
 group in \eqref{eqn:diagonal-s}.
\par
We now consider the case where $S$ is
$\alpha \times \alpha \times \alpha$.
Then relations of the third type are
\begin{multline*}
ABACBC:abc + ABAB:ab + AACC:ac + BCBC:bc = \\
BACACB:abc + BAAB:ab + ACAC:ac + BCCB:bc
\end{multline*}
for all $a$, $b$ and $c$ in $\alpha$.
Using isomorphisms and relations of the first and fourth types, we
 simplify this to 
\begin{equation}\label{eqn:third-simple}
ABAB:ab + ABAB:bc - ABAB:ac = 0.
\end{equation}
When $b$ and $c$ both equal $a$ this gives $ABAB:aa = 0$ for all $a$ in
 $\alpha$ as in the case where $S$ is diagonal.
\par
Consider equation \eqref{eqn:third-simple} for the triple
 $(\tau(a),b,c)$:
\begin{equation*}
ABAB:\tau(a)b + ABAB:bc - ABAB:\tau(a)c = 0.
\end{equation*}
By \eqref{eqn:trunc2} this becomes
\begin{equation}\label{eqn:third-tau}
-ABAB:ab + ABAB:bc + ABAB:ac = 0.
\end{equation}
Adding \eqref{eqn:third-simple} and \eqref{eqn:third-tau} we get
\begin{equation*}
2 ABAB:bc = 0
\end{equation*}
for all $b$ and $c$ in $\alpha$.
Thus all generators are either equal to $0$ or have order $2$.
Using this fact we simplify \eqref{eqn:third-simple} to
\begin{equation}\label{eqn:third-simplest}
ABAB:ab + ABAB:ac + ABAB:bc = 0.
\end{equation}
\par
If $a$ equals $b$, \eqref{eqn:third-simplest} becomes
\begin{equation*}
ABAB:aa + ABAB:ac + ABAB:ac = 0
\end{equation*}
which is trivially true.
If $b$ equals $c$, \eqref{eqn:third-simplest} becomes
\begin{equation*}
ABAB:ab + ABAB:ab + ABAB:bb = 0
\end{equation*}
which is also trivially true.
If $a$ equals $c$, \eqref{eqn:third-simplest} becomes
\begin{equation*}
ABAB:ab + ABAB:aa + ABAB:ba = 0
\end{equation*}
which implies
\begin{equation*}
ABAB:ab = ABAB:ba
\end{equation*}
for all $a$ and $b$ in $\alpha$.
\par
We pick an order on the elements of $\alpha_0$.
Then any term can be written in terms of generators of the form
$ABAB:ab$ where $a$ and $b$ are both in $\alpha_0$ and $a$ is less than
$b$.
The only remaining relations are those of the form in
\eqref{eqn:third-simplest} where $a$ is less than $b$ and $b$ is less
than $c$.
\par
When $\alpha_0$ has less than three elements, no relations remain.
When $\alpha_0$ contains only one element, all generators are equal to
$0$.
When $\alpha_0$ contains exactly two elements $a$ and $b$ ($a$ less than
 $b$), we have just one non-zero generator $ABAB:ab$.
\par
When $\alpha_0$ contains exactly three elements $a$, $b$ and $c$ ($a$
 less than $b$ and $b$ less than $c$), we have just three non-zero
 generators $ABAB:ab$, $ABAB:ac$ and $ABAB:bc$.
Only the relation \eqref{eqn:third-simplest} remains.
We use to eliminate the generator $ABAB:bc$ and we are left with two
 independent generators.
\par
Let $m$ be the number of elements in $\alpha_0$ ($m$ equals $l+k$).
We claim that the number of independent generators is equal to $m-1$.
We prove this by induction on $m$.
By the above discussion we have already seen that this is true when $m$
is $1$, $2$ or $3$.
Now, assuming that the result is true for $m-1$, we prove the result
true for $m$ greater than or equal to $4$.
\par
Let $a$ be the first letter in $\alpha_0$ and $x$ be the last letter in
$\alpha_0$, according to the order that we assigned to $\alpha_0$.
We will eliminate all relations which contain an $x$.
For each element $b$ in $\alpha_0 - \{a,x\}$ (such an element $b$ exists
 because $m$ is greater than or equal to $4$) we have a relation
\begin{equation*}
ABAB:ab + ABAB:ax + ABAB:bx = 0.
\end{equation*}
Rearranging, we get
\begin{equation}\label{eqn:bx-subs}
ABAB:bx = ABAB:ab + ABAB:ax
\end{equation}
which we use to eliminate generators of the form $ABAB:bx$ ($b$ in
$\alpha_0 - \{a,x\}$) from any other relations in which they appear.
Thus for any two different elements $b$ and $c$ in $\alpha_0 - \{a,x\}$,
($b$ less than $c$) we have the relation
\begin{equation*}
ABAB:bc + ABAB:bx + ABAB:cx = 0.
\end{equation*}
Substituting \eqref{eqn:bx-subs} for $ABAB:bx$ and the corresponding
expression for $ABAB:cx$, we get
\begin{equation*}
ABAB:bc + ABAB:ab + ABAB:ax + ABAB:ac + ABAB:ax = 0.
\end{equation*}
This simplifies to
\begin{equation*}
ABAB:bc + ABAB:ab + ABAB:ac = 0
\end{equation*}
which is equivalent to a relation that we already have.
In this way we can eliminate all generators involving $x$ from the
relations.
Note that we didn't rewrite $ABAB:ax$ in terms of other generators, but
it no longer appears in any relation.
This gives us an independent generator.
\par
The remaining relations are those for all ordered triples in 
$\alpha_0 - \{x\}$.
By the induction hypothesis, we can solve these equations to find $m-2$
independent generators.
So, including $ABAB:ax$, we have $m-1$ independent generators, as
claimed.
Thus, when $S$ is $\alpha \times \alpha \times \alpha$,
 $G_2(\alpha,\tau,S,1)$ is isomorphic to the 
 group in \eqref{eqn:full-s}.
\end{proof}
\section{Gauss words}\label{sec:gausswords}
Let $\alpha_{GW}$ be the set $\{a\}$, $\tau_{GW}$ be the
identity map and $S_{GW}$ be $\{(a,a,a)\}$.
For any nanoword over $\alpha_{GW}$, all letters map to $a$, so we can
forget the map to $\alpha_{GW}$ and just consider nanowords over
$\alpha$ as Gauss words.
The homotopy given by $(\alpha_{GW},\tau_{GW},S_{GW})$ is called
homotopy of Gauss words (it was called open homotopy of Gauss words in 
\cite{Gibson:gauss-word}).
\par
In \cite{Turaev:Words}, Turaev conjectured that all Gauss words are
homotopic to the trivial Gauss word.
However, the existence of counterexamples was shown independently in
\cite{Gibson:gauss-word} and \cite{Manturov:freeknots}.
In particular, in \cite{Gibson:gauss-word}, the first author showed that
the Gauss word $ABACDCBD$ is such a counterexample.
Later in this section we will show that $ABACDCBD$ is non-trivial using
a finite type invariant of degree $4$.
\par
In this section, we write $G_n$ for
$G_n(\alpha_{GW},\tau_{GW},S_{GW},1)$. 
We have the following proposition.
\begin{prop}\label{prop:no123}
For Gauss words, $G_1$, $G_2$ and $G_3$ are all isomorphic to $\Z$.
Thus there are no finite type invariants of degree $1$, $2$ or
 $3$.
\end{prop}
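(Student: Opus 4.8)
The plan is to use the decomposition $G_n \cong \Z \oplus H_n$ from Proposition~\ref{prop:zh-decomp} and reduce everything to showing that $H_1$, $H_2$ and $H_3$ all vanish; since $H_n$ is the maximal value group for normalized finite type invariants of degree $\leq n$, this also yields the ``no finite type invariants'' conclusion. The cases $H_1$ and $H_2$ come for free from earlier results. Indeed $H_1 = 0$ by Corollary~\ref{cor:nwft-nodegree1} (equivalently, the degree-one computation writes $H_1$ as a direct sum over pairs $1 \leq i < j \leq r$, which is empty when $r = 1$). For $H_2$, note that $(\alpha_{GW},\tau_{GW},S_{GW})$ has diagonal $S$ with a single fixed orbit, so $k = 1$ and $l = 0$; substituting into the diagonal case of Theorem~\ref{thm:deg2-nanoword} gives $G_2 \cong (\Z)^{1} \oplus (\cyclic{2})^{0} \cong \Z$, i.e. $H_2 = 0$.

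So all the work is in showing $H_3 = 0$. First I would pin down the generators. By the first-type and fourth-type relations, any generator of $H_3$ is a Gauss word of rank $1$, $2$ or $3$ containing no adjacent repeated letter. The rank-$1$ word $AA$ is zero, the only such rank-$2$ word is $ABAB$, and a short enumeration of the $(2\cdot 3-1)!! = 15$ linear chord matchings shows that exactly five rank-$3$ words survive: $w_2 = ABACBC$, $w_3 = ABCABC$, $w_4 = ABCACB$, $w_5 = ABCBAC$ and $w_6 = ABCBCA$. Writing $w_1 = ABAB$, the group $H_3$ is generated by $w_1,\dots,w_6$, and the task is to show that every $w_i$ is zero.

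Next I would extract relations, keeping in mind that in $H_3$ any word of rank greater than $3$ is zero by the fourth-type relation, so higher-rank instances of the moves degenerate into relations among the $w_i$. The rank-preserving relations are the expected ones: the second-type (H2) relations $w_4 = -2w_1$ and $w_5 = -2w_1$ (these are exactly the Fukunaga-type relations of the shape $ABCACB + 2\,ABAB = 0$), together with the empty-filler third-type (H3) relation $w_6 = w_1 + w_2$. Then I would feed in H3 relations carrying one extra chord $D$: the two principal terms now have rank $4$ and vanish, leaving relations among the surviving rank-$3$ terms. Placing $D$ in suitable filler slots yields $w_6 = 0$ (from a relation of the form $2w_6 = w_6$) and $w_3 = w_4$, and a further such relation reduces, after substitution and using $w_2 = -w_1$, to $3w_1 = 0$. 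At this stage the group looks like $\langle w_1 \mid 3w_1 = 0\rangle$, which is not yet trivial.

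The key step, and the main obstacle, is to recognize that the ``obvious'' relations are insufficient and that one must also invoke second-type (H2) relations carrying \emph{two} extra chords $C,D$. For such a relation the term $xAByBAz$ has rank $4$ and vanishes, so the relation collapses to $xAyAz + xByBz = 0$, a torsion relation between two rank-$3$ words. A suitable placement, for instance $x = C$, $y = CD$, $z = D$, makes both reduced words equal to $w_2$, giving $2w_2 = 0$. Combining $w_2 = -w_1$ (from $w_6 = 0$ and $w_6 = w_1 + w_2$) with $2w_2 = 0$ gives $2w_1 = 0$; together with $3w_1 = 0$ this forces $w_1 = 0$, whence $w_2 = w_6 = 0$ and then $w_3 = w_4 = w_5 = -2w_1 = 0$. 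Thus $H_3 = 0$ and $G_3 \cong \Z$. The genuine difficulty is not any single computation but organizing the finite case analysis of filler placements and realizing that the decisive input is the degeneration of the rank-$4$ moves: without the two-extra-letter H2 relations one is left with spurious torsion in $H_3$.
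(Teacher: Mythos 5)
Your proposal is correct and follows essentially the same route as the paper's proof: the same reduction of $G_1$ and $G_2$ via Corollary~\ref{cor:nwft-nodegree1} and Theorem~\ref{thm:deg2-nanoword}, the same six generators of $H_3$, and the same families of relations, including the crucial degenerate second-type relations carrying two extra letters (the paper extracts $2\,ABCABC = 0$ where you extract $2\,ABACBC = 0$, an immaterial difference, and the paper then combines $4\,ABAB=0$ with $3\,ABAB=0$ where you combine $2\,ABAB=0$ with $3\,ABAB=0$). Both computations force all generators of $H_3$ to vanish, so $H_3 = 0$ and $G_3 \cong \Z$.
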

\begin{proof}
The fact that $G_1$ is isomorphic to $\Z$ follows from
 Corollary~\ref{cor:nwft-nodegree1} and
by Theorem~\ref{thm:deg2-nanoword}, $G_2$ is isomorphic to $\Z$.
We now calculate $G_3$.
\par
By Proposition~\ref{prop:zh-decomp}, $G_3$ has the form
\begin{equation*}
G_3 \cong \Z \oplus H_3
\end{equation*}
where we have written $H_3$ for $H_3(\alpha_{GW},\tau_{GW},S_{GW},1)$.
By relations of the fourth type, if a Gauss word $w$ has rank greater
 than $3$ then $w$ equals $0$ in $H_3$.
By relations of the first type, if $w$ is isomorphic to a word of the
 form $xAAy$ then $w$ equals $0$ in $H_3$.
Thus Gauss words of these types can be eliminated from the presentation
 of $H_3$.
We are left with $6$ generators:
 $ABAB$, $ABACBC$, $ABCABC$, $ABCACB$, $ABCBAC$ and $ABCBCA$.
The following relations are relations of the second type:
\begin{equation*}
ABCACB + ABAB + ACAC = 0
\end{equation*} 
and
\begin{equation*}
ABCBAC + ACAC + BCBC = 0
\end{equation*}
which are equivalent to
\begin{equation*}
ABCACB + 2 ABAB = 0
\end{equation*} 
and
\begin{equation*}
ABCBAC + 2 ABAB = 0.
\end{equation*}
From relations of the second type we also get
\begin{equation*}
2 ABCABC = 0.
\end{equation*}
From relations of the third type we get
\begin{align*}
ABACBC + & ABAB + AACC + BCBC \\
= & BACACB + BAAB + ACAC + BCCB, \\
DABACDBC + & DABADB + DAACDC + DBCDBC \\
= & DBACADCB + DBAADB + DACADC + DBCDCB, \\
DABACBCD + & DABABD + DAACCD + DBCBCD \\
= & DBACACBD + DBAABD + DACACD + DBCCBD
\end{align*}
and
\begin{align*}
ABDACDBC + & ABDADB + ADACDC + BDCDBC \\
= & BADCADCB + BADADB + ADCADC +
 BDCDCB.
\end{align*}
Removing trivial generators and canceling isomorphic words, these
 relations become
\begin{align*}
ABACBC + ABAB & = ABCBCA, \\
ABCABC & = ABCACB, \\
ABCBCA & = 0
\end{align*} 
and
\begin{equation*}
ABCACB + ABACBC + ABCBAC = 2 ABCBCA + ABCABC.
\end{equation*}
\par
Solving all the above relations gives
\begin{equation*}
ABAB = ABACBC = ABCABC = ABCACB = ABCBAC = ABCBCA = 0.
\end{equation*}
Thus $G_3$ is isomorphic to $\Z$.
\end{proof}
We define a map $v_4$ from the set of Gauss words to $\cyclic{2}$ as follows.
For $i$ running from $1$ to $6$, the Gauss words $w_i$ are defined by
\begin{align*}
w_1 & = ABACDCBD, \\  
w_2 & = ABCACDBD, \\
w_3 & = ABCADBDC, \\
w_4 & = ABCBDACD, \\
w_5 & = ABCDBDAC
\end{align*}
and
\begin{equation*}
w_6 = ABCDCADB.
\end{equation*}
Then $v_4$ is given by
\begin{equation*}
v_4(w) = \langle \sum_{i=1}^6 w_i, w \rangle \mod 2.
\end{equation*}
\begin{prop}
The map $v_4$ is a homotopy invariant of Gauss words.
\end{prop}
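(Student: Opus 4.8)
The plan is to show directly that $v_4$ takes the same value on homotopic Gauss words. Because the angle bracket counts subphrases only up to isomorphism, invariance under the isomorphism relation is automatic, so it remains to treat the three homotopy moves. Write $u = \sum_{i=1}^{6} w_i$ and let $\lambda$ be the mod-$2$ linear functional sending a Gauss word $q$ to $\#\{i : q \cong w_i\} \bmod 2$; then $v_4(w) = \lambda(\theta_1(w))$ by the definition of $\theta_1$, where I abbreviate $\theta_1 = \theta_r$ with $r=1$. By Proposition~\ref{prop:thetar-isomorphism} the bijection $\theta_1$ carries the homotopy relations onto the defining relations of $G(\alpha_{GW},\tau_{GW},S_{GW},1)$, so for $w$ homotopic to $w'$ the difference $\theta_1(w)-\theta_1(w')$ ranges over the span of the relation elements of the first, second and third types. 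It therefore suffices to check that $\lambda$ vanishes on each such relation element modulo $2$. Concretely this amounts to verifying, move by move, that the subphrases isomorphic to some $w_i$ which involve the letters active in the move cancel in pairs modulo $2$.

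For H1 the relation element is a single Gauss word $xAAy$. Each $w_i$ is \emph{reduced}, containing no factor $XX$, and since isomorphisms preserve adjacency no subphrase containing the doubled letter can be isomorphic to any $w_i$. Hence the only contributing subphrases are those of $xy$, and $v_4$ is unchanged; this step is immediate from inspection of the six words.

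For H2 the relevant relation elements are the triples $xAByBAz + xAyAz + xByBz$, in which $A$ and $B$ sit in the adjacent nested configuration $\ldots AB\ldots BA\ldots$. First I would record the combinatorial fact that none of $w_1,\dotsc,w_6$ contains such an adjacent nested pair: in each $w_i$ the nested pair of chords is always separated by a third letter, so no $w_i$ is isomorphic to $xAByBAz$. This disposes of the rank-$4$ parent term (the case $\rank(xyz)=2$). It then remains to show that for each rank-$5$ word $xAByBAz$ the two rank-$4$ children $xAyAz$ and $xByBz$ supply an even number of $w_i$-isomorphs (the case $\rank(xyz)=3$), which is a finite verification.

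For H3, the crux, the relation element has eight terms, two of rank $\rank(xyzt)+3$ and six of rank $\rank(xyzt)+2$. Rank-$4$ contributions arise in exactly two ways: from the two full-triangle terms $xAByACzBCt$ and $xBAyCAzCBt$ when $\rank(xyzt)=1$, and from the six partial terms when $\rank(xyzt)=2$. The plan is to organize all of these into cancelling pairs modulo $2$: the two full-triangle words are interchanged by the H3 move, so I must check that $\{w_i\}$ meets them an even number of times, and the six partial words must likewise balance. The six words $w_1,\dotsc,w_6$ are chosen precisely so that these mod-$2$ counts vanish, and I expect the main obstacle to be the bookkeeping of this H3 case analysis—enumerating, according to how the three chords $A,B,C$ interleave with the surviving letters, which rank-$4$ subphrases fall in $\{w_i\}$ and confirming that each occurs an even number of times.
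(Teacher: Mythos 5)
Your reduction is valid, and it is worth saying that it is essentially the paper's own argument repackaged: the identity $v_4=\lambda\circ\theta_1$ holds, Proposition~\ref{prop:thetar-isomorphism} does guarantee that $\theta_1(w)-\theta_1(w')$ lies in the span of the type-1/2/3 relation elements whenever $w$ and $w'$ are homotopic, and your rank bookkeeping correctly isolates exactly what must be checked. Indeed, your H3 cases $\rank(xyzt)=1$ and $\rank(xyzt)=2$ are term-for-term the paper's cases $m(s)=3$ and $m(s)=2$, and your H2 and H1 observations match the paper's. (One simplification you missed: your H2 case $\rank(xyz)=3$ needs no verification at all, since the two children $xAyAz$ and $xByBz$ are isomorphic Gauss words — rename the surviving letter — so their $\lambda$-values agree and the sum is automatically even.)

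The genuine gap is that the proof stops exactly where the mathematical content begins. That the mod-$2$ counts cancel under H3 is \emph{not} a formal consequence of your setup; it is a special property of the particular six words $w_1,\dotsc,w_6$, and a generic choice of six rank-$4$ Gauss words would fail it. You assert this cancellation (``the six words \dots are chosen precisely so that these mod-$2$ counts vanish'') and explicitly defer the verification as ``bookkeeping'' you ``expect'' to work out, but nothing in the proposal establishes it. The paper's proof consists almost entirely of this verification: Table~\ref{tab:m2invariance} enumerates the several dozen rank-$5$ words relevant to the $\rank(xyzt)=2$ case and checks all six deletions of each against the list $w_1,\dotsc,w_6$, and Table~\ref{tab:m3invariance} handles the six configurations of the $\rank(xyzt)=1$ case. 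Similarly, your H2 claim that no $w_i$ contains an adjacent nested pair $\dotso XY \dotso YX \dotso$ is asserted in the conditional (``I would record'') rather than checked — it is true, and quick to verify, but as written it is another unproved ingredient. Until these finite checks are actually executed, what you have is a correct proof scheme, not a proof of the proposition.
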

\begin{proof}
We must prove that if $w$ and $w^\prime$ are two homotopic Gauss words,
 $v_4(w)$ and $v_4(w^\prime)$ are equal.
It is sufficient to prove this in the case that $w$ and $w^\prime$ are
 related by an isomorphism or homotopy move.
In the case that $w$ and $w^\prime$ are related by an isomorphism
 it is clear that $v_4(w)$ equals $v_4(w^\prime)$.
We now consider each homotopy move in turn.
\par
If $w$ and $w^\prime$ are related by an H1 move, then we may
 assume without loss of generality that $w$ has the form $xAAy$ and
 $w^\prime$ has the form $xy$.
Now observe that none of the $w_i$ are isomorphic to a Gauss word of the
 form $uAAv$.
Thus if $w_i$ is isomorphic to $s$, a subword of $w$, $s$ does not
 contain the letter $A$ and so $s$ is also a subword of $w^\prime$.
Thus $\langle w_i, w \rangle$ is equal to 
$\langle w_i, w^\prime \rangle$ and so $v_4(w)$ equals $v_4(w^\prime)$.
\par
If $w$ and $w^\prime$ are related by an H2 move, then we may
 assume without loss of generality that $w$ has the form $xAByBAz$ and
 $w^\prime$ has the form $xyz$.
Now observe that none of the $w_i$ are isomorphic to a Gauss word of the
 form $tABuBAv$.
If $w_i$ is isomorphic to $s$, a subword of $w$, either $s$ does not
 contain the letters $A$ and $B$, or it contains exactly one of them.
If $s$ does not contain $A$ and $B$, then $s$ is a subword of
 $w^\prime$.
Suppose that $s$ contains one of $A$ or $B$.
Without loss of generality we assume that it contains $A$.
Then there exists a subword $s^\prime$ of $w$ which contains the same
 letters as $s$ except that the letter $A$ is replaced with a $B$.
Clearly $s^\prime$ is also isomorphic to $w_i$.
Thus subwords isomorphic to $w_i$ and containing one of $A$ or $B$
 appear in pairs.
As $v_4(w)$ is defined modulo $2$, these pairs do not contribute anything
 to $v_4(w)$.
Thus $v_4(w)$ equals $v_4(w^\prime)$.
\par
If $w$ and $w^\prime$ are related by an H3 move, then we may
 assume without loss of generality that $w$ has the form $xAByACzBCt$
 and $w^\prime$ has the form $xBAyCAzCBt$.
Suppose $s$ is a subword of $w$ which is isomorphic to a $w_i$.
Let $m(s)$ be the number of letters in the set $\{A,B,C\}$ which appear
 in $s$.
For each $m$ in $\{0,1,2,3\}$, we will show that the contributions to
 $v_4(w)$ of subwords $s$ of $w$ with $m(s)$ equal to $m$ matches the
 contributions to $v_4(w^\prime)$ of subwords $s^\prime$ of $w^\prime$
 with $m(s^\prime)$ equal to $m$.
\par
First consider the case where $m$ is $0$.
Then any subword $s$ of $w$ which $m(s)$ equal to $0$ is also a subword
 of $w^\prime$.
Since these subwords are in one-to-one correspondence, their
 contributions to $v_4(w)$ and $v_4(w^\prime)$ are equal.
\par
Next consider the case where $m$ is $1$.
It is easy to see that any subword $s$ of $w$ which $m(s)$ equal to $1$
 is also a subword of $w^\prime$.
Thus the contributions to $v_4(w)$ and $v_4(w^\prime)$ of these kinds of
 subwords are also equal.
\par
Now consider the case where $m$ is $2$.
Suppose $s$ is a rank $4$ subword of $w$ such that $m(s)$ is equal to
 $2$.
Then $s$ contains two letters from $\{A,B,C\}$ and also contains another
 two letters $D$ and $E$.
So we consider all rank $5$ Gauss words which contain $ABACBC$ as a
 subword and check how the subwords of rank $4$ which contain exactly
 two letters from $\{A,B,C\}$ contribute to $v_4(w)$.
We note that since none of the words $w_i$ are of the form $uDDv$,
 $tDEuDEv$ or $tDEuEDv$, we can elimate words of this form.
The remaining words are listed in Table~\ref{tab:m2invariance}.
To derive the subwords of these words which contain exactly two letters
 from $\{A,B,C\}$ we must delete one of $A$, $B$ or $C$ from the word.
This gives $3$ subwords.
In column $A$ we indicate which of the $w_i$ is isomorphic to the
 subword derived by deleting $A$.
If the subword is not isomorphic to any of the $w_i$, the column is left
 blank.
The columns $B$ and $C$ indicate the results of deleting $B$ and $C$
 respectively.
The next column, labelled ``word after H3'', shows the result of
 applying the H3 move involving $A$, $B$ and $C$ on the word.
The columns $A$, $B$ and $C$ to the right of this column indicate the
 result of deleting $A$, $B$ or $C$ from this second word.
From the table, it is clear that in each case the contributions to
 $v_4(w)$ and $v_4(w^\prime)$ are equivalent modulo $2$.
\begin{table}[hbt]
\begin{center}
\begin{tabular}{c|c|c|c|c|c|c|c}
word & $A$ & $B$ & $C$ &
word after H3 & $A$ & $B$ & $C$ \\
\hline
$DABDACEBCE$ & & & & $DBADCAECBE$ & $w_1$ & & $w_2$ \\
$DABEACDBCE$ & & & & $DBAECADCBE$ & & & \\
$DABEACEBCD$ & & & & $DBAECAECBD$ & & & \\
\hline
$DEABDACEBC$ & & $w_2$ & & $DEBADCAECB$ & $w_3$ & & \\
$DEABEACDBC$ & $w_4$ & & & $DEBAECADCB$ & & & $w_5$ \\
$DEABDACBCE$ & & & & $DEBADCACBE$ & & & \\
$DEABEACBCD$ & & & & $DEBAECACBD$ & & & \\
$DEABACDBCE$ & & & $w_6$ & $DEBACADCBE$ & & $w_6$ & \\
$DEABACEBCD$ & & & & $DEBACAECBD$ & & & \\
$DABDEACEBC$ & $w_1$ & & $w_3$ & $DBADECAECB$ & & & \\
$DABEDACEBC$ & & & & $DBAEDCAECB$ & & & \\
$DABDEACBCE$ & & & & $DBADECACBE$ & & & \\
$DABEDACBCE$ & $w_3$ & & & $DBAEDCACBE$ & & $w_3$ & \\
$ABDEACDBCE$ & & $w_3$ & & $BADECADCBE$ & $w_5$ & & \\
$ABDEACEBCD$ & $w_6$ & & & $BADECAECBD$ & & & $w_5$ \\
$DABEACDEBC$ & & $w_4$ & & $DBAECADECB$ & & & $w_6$ \\
$DABEACEDBC$ & & & $w_5$ & $DBAECAEDCB$ & $w_6$ & & \\
$DABACDEBCE$ & & & & $DBACADECBE$ & & & \\
$DABACEDBCE$ & & & $w_4$ & $DBACAEDCBE$ & & $w_4$ & \\
$ABDACDEBCE$ & $w_4$ & & $w_2$ & $BADCADECBE$ & & & \\
$ABDACEDBCE$ & & & & $BADCAEDCBE$ & & & \\
$DABEACBCDE$ & $w_5$ & & & $DBAECACBDE$ & & $w_5$ & \\
$DABEACBCED$ & & & & $DBAECACBED$ & & & \\
$DABACEBCDE$ & & & & $DBACAECBDE$ & & & \\
$DABACEBCED$ & & & & $DBACAECBED$ & & & \\
$ABDACEBCDE$ & & $w_1$ & & $BADCAECBDE$ & & & $w_4$ \\
$ABDACEBCED$ & & & $w_3$ & $BADCAECBED$ & $w_6$ & & \\
\hline
$DEDABEACBC$ & & & & $DEDBAECACB$ & & & \\
$DEDABACEBC$ & & & $w_1$ & $DEDBACAECB$ & & $w_1$ & \\
$DEDABACBCE$ & & & & $DEDBACACBE$ & & & \\
$DABEDEACBC$ & $w_2$ & & & $DBAEDECACB$ & & $w_2$ & \\
$ABDEDACEBC$ & & & $w_6$ & $BADEDCAECB$ & & $w_4$ & \\
$ABDEDACBCE$ & & & & $BADEDCACBE$ & & & \\
$DABACEDEBC$ & & & & $DBACAEDECB$ & & & \\
$ABDACEDEBC$ & $w_5$ & & & $BADCAEDECB$ & & $w_3$ & \\
$ABACDEDBCE$ & & & $w_1$ & $BACADEDCBE$ & & $w_1$ & \\
$DABACBCEDE$ & & & & $DBACACBEDE$ & & & \\
$ABDACBCEDE$ & $w_2$ & & & $BADCACBEDE$ & & $w_2$ & \\
$ABACDBCEDE$ & & & & $BACADCBEDE$ & & & \\
\end{tabular}
\end{center}
\caption{Contributions to $v_4$ when $m(s)$ is $2$}
\label{tab:m2invariance}
\end{table}
\par
Finally consider the case where $m$ is $3$.
Suppose $s$ is a rank $4$ subword of $w$ such that $m(s)$ is equal to
 $3$.
Let $D$ be the $4$th letter of $s$.
Suppose the two occurences of $D$ in $s$ appear together (that is, $s$
 is of the form $uDDv$).
Then, as we observed above, no $w_i$ is of this form, so $s$ cannot be
 isomorphic to a $w_i$.
Thus it is sufficient to check only the cases where the two occurences
 of $D$ do not appear together.
There are $6$ such cases.
We show that the contributions to $v_4(w)$ and $v_4(w^\prime)$ of these
 kinds of subwords are equal in Table~\ref{tab:m3invariance}.
\begin{table}[hbt]
\begin{center}
\begin{tabular}{c|c|c|c|c|c}
$s$ & matches & $v_4(w)$ &
$s^\prime$ & matches & $v_4(w^\prime)$ \\
\hline
$DABDACBC$ &       & $0$ & $DBADCACB$ &       & $0$ \\
$DABACDBC$ & $w_4$ & $1$ & $DBACADCB$ & $w_6$ & $1$ \\
$DABACBCD$ &       & $0$ & $DBACACBD$ &       & $0$ \\
$ABDACDBC$ &       & $0$ & $BADCADCB$ &       & $0$ \\
$ABDACBCD$ & $w_3$ & $1$ & $BADCACBD$ & $w_5$ & $1$ \\
$ABACDBCD$ &       & $0$ & $BACADCBD$ &       & $0$ \\
\end{tabular}
\end{center}
\caption{Contributions to $v_4$ when $m(s)$ is $3$}
\label{tab:m3invariance}
\end{table}
\par
This completes the proof.
\end{proof}
Now $v_4(\trivial)$ is equal to $0$ but $v_4(ABACDCBD)$ is equal to $1$.
Thus $ABACDCBD$ is a homotopically non-trivial Gauss word and
the invariant $v_4$ is non-trivial.
In fact, $ABACDCBD$ was shown to be homotopically non-trivial in
\cite{Gibson:gauss-word}.
The invariant $v_4$ gives another simple way to prove this fact.
\begin{prop}
The invariant $v_4$ is a finite type invariant of degree $4$.
\end{prop}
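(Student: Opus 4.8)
The plan is to bound the degree of $v_4$ from above using its angle-bracket description, and then to pin it down exactly by appealing to the computation of $G_3$ already carried out in Proposition~\ref{prop:no123}. No new invariance computation is needed, since $v_4$ has just been shown to be a homotopy invariant.

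First I would establish that $v_4$ has degree at most $4$. Each of the six words $w_i$ has length $8$ and hence $\rank(w_i) = 4$, so $\sum_{i=1}^6 w_i$ is an element of $\Z P(\alpha_{GW},\tau_{GW},S_{GW},1)$ of degree $4$. It then suffices to check the vanishing condition directly: for a Gauss word $q$ carrying more than $4$ semi-letters, Lemma~\ref{lem:singular-plusn} gives $\langle w_i, q \rangle = 0$ for each $i$, whence $\langle \sum_{i=1}^6 w_i, q \rangle = 0$ and therefore $v_4(q) = 0$ in $\cyclic{2}$. Equivalently, one applies Proposition~\ref{prop:angleisft} and notes that post-composing an angle-bracket formula with the reduction $\Z \to \cyclic{2}$ does not raise the degree. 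Either way, $v_4$ is a finite type invariant of degree at most $4$.

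The remaining task is to rule out degree $3$ or less. Here I would invoke Proposition~\ref{prop:no123}, which gives $G_3 \cong \Z$; equivalently, in the decomposition $G_3 \cong \Z \oplus H_3$ of Proposition~\ref{prop:zh-decomp} we have $H_3 = 0$. Consequently the universal degree-$3$ invariant $\Gamma_{3,1}$, written via Equation~\eqref{eqn:gamma-as-angle}, collapses: every generator of rank $1$, $2$ or $3$ maps into $H_3 = 0$, so $\Gamma_{3,1}(w) = \langle \trivial, w \rangle\,\trivial = \trivial$ for every Gauss word $w$. Since any finite type invariant of degree at most $3$ factors through $\Gamma_{3,1}$, every such invariant is constant on Gauss words. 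But $v_4(\trivial) = 0$ while $v_4(ABACDCBD) = 1$, so $v_4$ is non-constant and cannot have degree $\leq 3$. Combined with the upper bound, $v_4$ has degree exactly $4$.

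The substantive work—the invariance of $v_4$ and the computation of $G_3$—has already been done, so the only delicate point left is the lower bound: one must read off from $G_3 \cong \Z$ that every degree $\leq 3$ invariant is constant on homotopy classes, and confirm that the already-exhibited non-trivial word $ABACDCBD$ genuinely obstructs this. I expect no further case analysis to be required.
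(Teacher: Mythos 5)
Your proof is correct and takes essentially the same route as the paper: the upper bound comes from the degree-$4$ angle-bracket presentation (Proposition~\ref{prop:angleisft}, equivalently a direct application of Lemma~\ref{lem:singular-plusn}), and the lower bound from the non-triviality $v_4(\trivial) = 0 \neq 1 = v_4(ABACDCBD)$ together with Proposition~\ref{prop:no123}. Your intermediate discussion of $H_3 = 0$ and the collapse of $\Gamma_{3,1}$ merely unpacks what Proposition~\ref{prop:no123} already asserts, so it adds detail rather than a different argument.
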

\begin{proof}
By Proposition~\ref{prop:angleisft}, $v_4$ is a finite type invariant of
 degree less than or equal to $4$.
As $v_4$ is non-trivial, it is not of degree $0$ and so
 Proposition~\ref{prop:no123} implies $v_4$ must have degree greater than
 or equal to $4$.
\end{proof}
\begin{rem}\label{rem:v4-ftinv}
There is a natural map from open virtual knot diagrams to Gauss words
 and this induces a well-defined map from open virtual knots to homotopy
 classes of Gauss words \cite{Turaev:KnotsAndWords}.
Thus any finite type invariant of Gauss words is a finite type invariant
 of open virtual knots.
In particular $v_4$ is a finite type invariant of open virtual knots.
\end{rem}
\begin{rem}
The virtualization move, shown in Figure~1 in
 \cite{chrisman:ftype-vmove}, is a local diagrammatic move on a single
 real crossing.
Under the natural map from open virtual knot diagrams to Gauss words
 mentioned in Remark~\ref{rem:v4-ftinv}, it is clear that two open virtual knot
 diagrams related by a virtualization move map to the same Gauss word.
\par
In \cite{chrisman:ftype-vmove}, it is stated that there are no
 non-constant Goussarov-Polyak-Viro finite type invariants for virtual
 knots (open or closed) which are invariant under the virtualization
 move. 
This statement appears to contradict Remark~\ref{rem:v4-ftinv}.
However, there is no contradiction because in
 \cite{chrisman:ftype-vmove} only $\Z$-valued finite type invariants are
 considered and $v_4$ is $\cyclic{2}$-valued.
\end{rem}
The following theorem shows that $v_4$ is essentially the only finite
 type invariant of degree $4$ for Gauss words.
\begin{thm}\label{thm:gw}
For Gauss words, $G_4$ is isomorphic to $\Z \oplus \cyclic{2}$.
If two Gauss words, $w$ and $w^\prime$, can be distinguished by a
finite type invariant of degree $4$, they can be distinguished by
 $v_4$.
\end{thm}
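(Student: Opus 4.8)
The plan is to combine the universal-invariant machinery of Section~\ref{sec:universal} with a direct computation of the group $G_4$. By Proposition~\ref{prop:zh-decomp} we may write $G_4 \cong \Z \oplus H_4$, where $H_4 = H_4(\alpha_{GW},\tau_{GW},S_{GW},1)$, so the whole statement reduces to proving $H_4 \cong \cyclic{2}$ together with the observation that the nontrivial class of $H_4$ is detected by $v_4$. Once $H_4 \cong \cyclic{2}$ is known, the distinguishing assertion is almost formal. Since $v_4$ is a $\cyclic{2}$-valued finite type invariant of degree $4$, the universality of $\Gamma_{4,1}$ (Proposition~\ref{prop:gamma-universal}) gives a homomorphism $f\colon G_4 \to \cyclic{2}$ with $v_4 = f \circ \Gamma_{4,1}$. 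For an honest Gauss word $w$ the coefficient of $\trivialp{1}$ in $\Gamma_{4,1}(w)$ is always $1$, so $v_4(w)$ differs from the constant $f(\trivialp{1})$ by $f$ applied to the $H_4$-component $h(w)$ of $\Gamma_{4,1}(w)$. Two words $w,w'$ are separable by \emph{some} degree-$4$ invariant exactly when $\Gamma_{4,1}(w) \neq \Gamma_{4,1}(w')$, which (the $\trivialp{1}$-coordinates always agreeing) means $h(w) \neq h(w')$ in $H_4 \cong \cyclic{2}$; as $f|_{H_4}$ is forced to be the isomorphism $\cyclic{2}\to\cyclic{2}$, this is equivalent to $v_4(w)\neq v_4(w')$.

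For the lower bound $|H_4| \geq 2$ I would use the same $f$: because $v_4(\trivial)=0$ while $v_4(ABACDCBD)=1$, and both arguments are genuine Gauss words whose images under $\Gamma_{4,1}$ agree in the $\Z$-summand, the map $f$ is nonzero on $H_4$, so $H_4$ surjects onto $\cyclic{2}$. The real work is therefore the upper bound, namely that $H_4$ is a quotient of $\cyclic{2}$.

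For the upper bound I would compute the presentation of $H_4$ directly, in the style of Proposition~\ref{prop:no123} and Theorem~\ref{thm:deg2-nanoword}. The generators are the Gauss words of rank $1,2,3,4$ not isomorphic to a word of the form $xAAy$ (killed by first-type relations; rank $>4$ words are killed by fourth-type relations), and the relations are those of the second and third types. A useful preliminary reduction is that the identity map induces a surjection $G_4 \to G_3$, immediate from the definitions, whose kernel is generated by the rank-$4$ words; since $G_3 \cong \Z$ by Proposition~\ref{prop:no123}, an elementary diagram chase shows that $H_4$ is already generated by the classes of the rank-$4$ Gauss words, so the essential computation lives at rank $4$. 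Concretely I would: (1) observe that because $\tau_{GW}$ is the identity, the second-type relations collapse so that the generators are $2$-torsion, exactly as \eqref{eqn:order2} and the relation $2\,ABCABC=0$ were obtained in Proposition~\ref{prop:no123}, making $H_4$ a $\cyclic{2}$-vector space; (2) use the second-type relations $xAByBAz + xAyAz + xByBz = 0$ to eliminate every generator containing an adjacent nested pattern $AB\dots BA$ in favour of lower-rank ones; and (3) run through the third-type (H3) relations to identify the surviving rank-$4$ generators with one another and with the single class represented by $w_1 = ABACDCBD$.

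The main obstacle is step (3): there are $105$ linear chord diagrams of rank $4$, and after discarding the $xAAy$ types a large family of generators remains, linked by many third-type relations, each H3 move producing a relation among eight nanophrases. Organising this finite but sizeable case analysis so that it transparently collapses to a single $\cyclic{2}$ is the crux, and the bookkeeping is exactly of the kind already carried out in Table~\ref{tab:m2invariance}. In practice I would lean on the generation-by-rank-$4$-words reduction together with the $2$-torsion from step~(1) to cut the number of independent generators down to one, and then invoke the lower bound to guarantee that this last generator has order precisely $2$ rather than being trivial. This yields $H_4 \cong \cyclic{2}$, hence $G_4 \cong \Z \oplus \cyclic{2}$, and with it both assertions of the theorem.
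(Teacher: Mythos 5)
Your proposal follows the same route as the paper, and in fact records substantially more of the argument than the paper does: the paper's entire proof of Theorem~\ref{thm:gw} is the single sentence that ``by a straightforward but lengthy calculation which we omit'' one finds that $G_4$ is $\langle \trivial, w_1 \mid 2w_1 = 0 \rangle$. Everything you add on top of that skeleton is correct and worthwhile: the decomposition $G_4 \cong \Z \oplus H_4$ via Proposition~\ref{prop:zh-decomp}; the observation that the distinguishing claim is formal once $H_4 \cong \cyclic{2}$ is known, using the homomorphism $f$ with $v_4 = f \circ \Gamma_{4,1}$ supplied by Proposition~\ref{prop:gamma-universal}, the fact that every genuine Gauss word has $\trivialp{1}$-coefficient $1$ in $\Gamma_{4,1}$, and the fact that $f$ restricted to $H_4$ is nonzero (hence injective on $\cyclic{2}$) because $v_4(\trivial) \neq v_4(w_1)$; the lower bound $H_4 \neq 0$ from the nontriviality of $v_4$; the reduction showing $H_4$ is generated by the rank-$4$ classes (since $G_3 \cong \Z$ forces $H_3 = 0$ while $\ker(G_4 \to G_3)$ is generated by rank-$4$ words); and the $2$-torsion argument, which generalizes the derivation of the relation $2\,ABCABC = 0$ in Proposition~\ref{prop:no123}: inserting a fresh letter next to a letter of a rank-$4$ word $w$ produces a rank-$5$ word that dies by the fourth-type relations, so the second-type relation yields $2w = 0$.

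The one thing to be clear about is that the substantive content of the theorem --- that the third-type (H3) relations collapse all surviving rank-$4$ generators onto the single class of $w_1$ --- is carried out neither by you nor by the paper. You correctly identify this as the crux and sketch how to organize it, but a sketch of a finite case analysis is not the analysis itself, and nothing in your step~(3) rules out, say, $H_4$ having two independent generators. So your proposal is exactly as complete as the paper's own proof: the correct framework plus an omitted computation, with the omission in the same place. Your reductions do make the remaining work genuinely smaller than it first appears: only rank-$4$ generators matter, everything is $2$-torsion, so the computation is linear algebra over $\cyclic{2}$ on at most $105$ generators (fewer after discarding words of the form $xAAy$), with one relation per second-type and third-type instance --- finite, mechanical, and well suited to a computer check of the kind implicit in Table~\ref{tab:m2invariance}.
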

\begin{proof}
By a straightforward but lengthy calculation which we omit, it can be
 shown that $G_4$ is the additive abelian group given by
$\langle \trivial, w_1 | 2 w_1 = 0 \rangle$.
\end{proof}
\section{Closed homotopy}\label{sec:closed-homotopy}
In \cite{Turaev:KnotsAndWords} Turaev defined \emph{shift moves} on
nanophrases.
Let $\nu$ be an involution on $\alpha$.
Let $p$ be an $r$-component nanophrase over $\alpha$.
A \emph{shift move} on the $i$th component of $p$ is a move which gives
a new nanophrase $p^\prime$ as follows.
If the $i$th component of $p$ is empty or contains a single letter,
$p^\prime$ is $p$.
If not, the $i$th component of $p$ has the form $Ax$.
Then the $i$th component of $p^\prime$ is $xA$ and for all $j$ not equal
to $i$, the $j$th component of $p^\prime$ is the same as the $j$th
component of $p$.
Furthermore, writing $\xType{A}_p$ for $\xType{A}$ in $p$ and
$\xType{A}_{p^\prime}$ for $\xType{A}$ in $p^\prime$, if $x$ contains
the letter $A$, then $\xType{A}_{p^\prime}$ equals $\nu(\xType{A})$.
Otherwise, $\xType{A}_{p^\prime}$ equals $\xType{A}$.
\begin{ex}
Let $\alpha$ be the set $\{a,b\}$ and $\nu$ be the involution on
 $\alpha$ which swaps $a$ and $b$.
Let $p$ be the nanophrase $ABAC|BC:aaa$.
Applying a shift move to the $1$st component of $p$ gives
 $BACA|BC:baa$.
Applying a shift move to the $2$nd component of $p$ gives
 $ABAC|CB:aaa$.
\end{ex}
\emph{Closed homotopy} of nanophrases over $\alpha$ is the equivalence
relation generated by homotopy and shift moves.
The definition of closed homotopy is parameterized by $\alpha$, $\tau$,
$S$ and $\nu$.
\begin{rem}\label{rem:vknot-homotopy-closed}
Recall from Remark~\ref{rem:vknot-homotopy} the definitions of
 $\alpha_{vk}$, $\tau_{vk}$ and $S_{vk}$.
Let $\nu_{vk}$ be the involution on $\alpha$ which sends $\aplus$ to
 $\bplus$ and $\aminus$ to $\bminus$.
Then $\alpha_{vk}$, $\tau_{vk}$, $S_{vk}$ and $\nu_{vk}$ define a closed
 homotopy.
Turaev showed that under this homotopy, the homotopy classes of
 nanophrases over $\alpha_{vk}$ correspond to ordered virtual links
 (virtual links where the components are ordered and equivalence of
 ordered virtual links respects this order)
 \cite{Turaev:KnotsAndWords}.
\end{rem}
\par
The definition of finite type invariants and universal invariants
extends to closed homotopy.
Indeed our definition for finite type invariants of the homotopy given
in Remark~\ref{rem:vknot-homotopy-closed} corresponds to Goussarov,
Polyak and Viro's definition in 
\cite{Goussarov/Polyak/Viro:FiniteTypeInvariants}.
\par
Writing $\widetilde{P}(\alpha,\tau,S,\nu,r)$ for $P(\alpha,\tau,S,r)$ modulo
shift moves and $\widetilde{G}(\alpha,\tau,S,\nu,r)$ for
$G(\alpha,\tau,S,r)$ modulo shift moves, it is easy to check that the
map $\theta_r$ induces an isomorphism from
$\Z\widetilde{P}(\alpha,\tau,S,\nu,r)$ to $\widetilde{G}(\alpha,\tau,S,\nu,r)$.
We then define $\widetilde{G}_n(\alpha,\tau,S,\nu,r)$ to be
$G_n(\alpha,\tau,S,r)$ modulo shift moves.
Thus $\Gamma_{n,r}$ induces a map from
$\Z\widetilde{P}(\alpha,\tau,S,\nu,r)$ to
$\widetilde{G}_n(\alpha,\tau,S,\nu,r)$ which, by analogous arguments to
those above, is a universal invariant of degree $n$.
\begin{rem}
In \cite{Goussarov/Polyak/Viro:FiniteTypeInvariants} the algebras
 $\mathcal{P}$ and $\mathcal{P}_n$ are defined.
Using the notation from Remark~\ref{rem:vknot-homotopy-closed}, we note
 that if
we consider $\mathcal{P}$ and $\mathcal{P}_n$ as additive groups (by
 forgetting about the multiplication operation), then
$\widetilde{G}(\alpha,\tau,S,\nu,1)$ is isomorphic to $\mathcal{P}$ and
$\widetilde{G}_n(\alpha,\tau,S,\nu,1)$ is isomorphic to $\mathcal{P}_n$.
\end{rem} 
Any invariant which is finite type of degree $n$ for a closed homotopy
is finite type of degree $n$ for the corresponding homotopy without
shift moves.
For degrees $0$ and $1$ the reverse is also true.
In other words $\widetilde{G}_i(\alpha,\tau,S,\nu,r)$ is equal to
$G_i(\alpha,\tau,S,r)$ for $i$ equal to $0$ or $1$ and for all $\alpha$,
$\tau$, $S$ and $\nu$.
For degree $2$ however, the reverse is not true.
For example, Fukunaga's $T$ invariant is not invariant under the shift 
move given by taking $\nu$ to be $\tau$, so it is not a finite type
invariant for the corresponding closed homotopy.
We also note that the invariant $v_4$ is not invariant under shift
moves.
\bibliography{mrabbrev,nanoftype}
\bibliographystyle{hamsplain}
\end{document}